\DeclareMathOperator{\rk}{rk}
\DeclareMathOperator{\Hom}{Hom}
\DeclareMathOperator{\im}{Im}        
\DeclareMathOperator{\Pic}{Pic}
\DeclareMathOperator{\Br}{Br}
\DeclareMathOperator{\SHom}{\mathcal{H\kern -1pt}\textit{om}} 
\DeclareMathOperator{\SEnd}{\mathcal{E\kern -1pt}\textit{nd}} 
\DeclareMathOperator{\SExt}{\mathcal{E\kern -1pt}\textit{xt}} 
\DeclareMathOperator{\NS}{NS}
\DeclareMathOperator{\et}{\acute{e}t} 
\DeclareMathOperator{\ch}{ch}
\DeclareMathOperator{\td}{td}
\DeclareMathOperator{\Coh}{\mathbf{Coh}}
\def\dar[#1]{\ar@<2pt>[#1]\ar@<-2pt>[#1]}
\newcommand\matQ{\mathbb{Q}}
\newcommand\matZ{\mathbb{Z}}
\newcommand\bbC{\mathbb{C}}
\newcommand\bbG{\mathbb{G}}
\newcommand\bbQ{\mathbb{Q}}
\newcommand\bbZ{\mathbb{Z}}
\newcommand\calA{\mathcal{A}}
\newcommand\calB{\mathcal{B}}
\newcommand\calC{\mathcal{C}}
\newcommand\calD{\mathcal{D}}
\newcommand\calE{\mathcal{E}}
\newcommand\calF{\mathcal{F}}
\newcommand\calG{\mathcal{G}}
\newcommand\calK{\mathcal{K}}
\newcommand\calL{\mathcal{L}}
\newcommand\calM{\mathcal{M}}
\newcommand\calO{\mathcal{O}}
\newcommand\calS{\mathcal{S}}
\newcommand\calU{\mathcal{U}}
\newcommand\calV{\mathcal{V}}
\newcommand{\fonction}[5]{\begin{array}{lrcl} 
#1: & #2 & \longrightarrow & #3 \\
    & #4 & \longmapsto & #5 \end{array}}
\newcommand{\functionstar}[4]{
\begin{array}{rcl} #1 &\longrightarrow &#2 \\ #3&\longmapsto &#4 \end{array}
}
\newcommand{\isomorphismstar}[4]{
\begin{array}{rcl} #1 &\overset{\sim}{\longrightarrow} &#2 \\ #3&\longmapsto &#4 \end{array}
}
\DeclareSymbolFont{cyrillic}{T2A}{cmr}{m}{n}
\DeclareMathSymbol{\Sha}{\mathalpha}{cyrillic}{216}
\numberwithin{equation}{section}
\newtheorem{dummy}{dummy}[section]
\newtheorem{definition}[dummy]{Definition}
\newtheorem{theorem}[dummy]{Theorem}
\newtheorem{corollary}[dummy]{Corollary}
\newtheorem{lemma}[dummy]{Lemma}
\newtheorem{proposition}[dummy]{Proposition}
\theoremstyle{definition}
\newtheorem{remark}[dummy]{Remark}
\theoremstyle{plain}
\newtheorem{defprop}[dummy]{Definition/Proposition}
\newcommand{\Z}{\ensuremath{\mathbb{Z}}}
\newcommand{\spec}{\ensuremath{\operatorname{Spec}}}
\newcommand{\Q}{\ensuremath{\mathbb{Q}}}
\newcommand{\CC}{\ensuremath{\mathbb{C}}}
\newcommand{\PP}{\ensuremath{\mathbb{P}}}
\newcommand{\bigo}{\ensuremath{\mathcal{O}}}
\newcommand{\klat}{\ensuremath{\Lambda_{\operatorname{K3}}}}
\def\Db{\calD^{b}}
\newcommand{\catname}[1]{{\normalfont\textbf{#1}}}
\def\NS{\operatorname{NS}}
\def\Pic{\operatorname{Pic}}
\def\olPic{\overline{\operatorname{Pic}}}
\def\Br{\operatorname{Br}}
\def\rk{\operatorname{rk}}
\def\disc{\operatorname{disc}}
\def\Jac{\operatorname{J}}
\def\Kn{{\ensuremath{\operatorname{K3}^{[n]}}}}
\def\ol{\overline}
\def\N{\operatorname{N}}
\def\SBr{\operatorname{SBr}}
\def\toplus{\ensuremath{\tilde{O}^+}}
\def\dv{\operatorname{div}}
\title{Obstruction Classes for Moduli Spaces of Sheaves and Lagrangian Fibrations}
\author[D.~Mattei]{Dominique Mattei}
\address{
Mathematisches Institut \& Hausdorff Center for Mathematics,
Universit\"at Bonn, Endenicher Allee 60, 53115 Bonn, Germany.
}
\email{dmattei@math.uni-bonn.de}
\author[R.~Meinsma]{Reinder Meinsma}
\address{
School of Mathematics and Statistics, University of Sheffield,
Hounsfield Road, S3 7RH, UK, and
Hausdorff Center for Mathematics,
Universit\"at Bonn, Endenicher Allee 60, 53115 Bonn, Germany.}
\email{r.r.meinsma@gmail.com}
\begin{document}
\begin{abstract}
    We investigate obstruction classes of moduli spaces of sheaves on K3 surfaces. We extend previous results by C\u ald\u araru, explicitly determining the obstruction class and its order in the Brauer group. Our main theorem establishes a short exact sequence relating the Brauer group of the moduli space to that of the underlying K3 surface. This provides a criterion for when the moduli space is fine, generalising well-known results for K3 surfaces. Additionally, we explore applications to Ogg--Shafarevich theory for Beauville--Mukai systems. Furthermore, we investigate birational equivalences of Beauville–Mukai systems on elliptic K3 surfaces, presenting a complete characterisation of such equivalences.
\end{abstract}
\maketitle
\tableofcontents

\section{Introduction}
For a K3 surface $S$ with primitive Mukai vector $v$ and polarisation $H$, one can construct the moduli space $M_H(v)$ of $H$-Gieseker semistable sheaves with Mukai vector $v$. It is a hyperk\"ahler manifold of dimension $v^2+2$, which is deformation equivalent to a Hilbert scheme of points on a K3 surface, provided $H$ is generic \cite{GH96,OGr97,Huy06,BM14ii,BM14i}. These moduli spaces provide some of the most important examples of hyperk\"ahler manifolds, and their geometry is a subject of great interest. 

The moduli space $M_H(v)$ is generally a coarse moduli space. That is, there is not always a universal sheaf $\calE$ on $S\times M_H(v)$ for which $\calE|_{S\times [\calF]}\simeq \calF$ for all $[\calF]\in M_H(v)$. The obstruction to the existence of a universal sheaf is a Brauer class on $M_H(v)$. 
More precisely, there is a unique Brauer class $\alpha\in \Br(M_H(v))$ for which there exists a $(1\boxtimes \alpha)$-twisted universal sheaf on $S\times M_H(v)$. This Brauer class is called the \textit{obstruction class} of $M_H(v)$. If we assume $v^2=0$, then $M_H(v)$ is a K3 surface, and in this setting the obstruction class was computed explicitly in terms of cohomology by C\u ald\u araru in his PhD thesis \cite{Cal00}. His results showed that there exists a short exact sequence 
\begin{equation}\label{eq:introsesbrauer}
0\to \langle\alpha\rangle\to \Br(M_H(v))\to \Br(S)\to 0,
\end{equation}
and his very precise description of $\alpha$ also tells us the order of $\alpha$ in $\Br(M_H(v))$. The order of $\alpha$ equals the so-called \textit{divisibility} of the Mukai vector $v$, which is a number that is easy to compute in practice.

Before we explain our main result, let us briefly note some of the applications of C\u ald\u araru's work. Firstly, since a universal sheaf on $S\times M_H(v)$ exists if and only if the obstruction class $\alpha$ is trivial, it is a direct consequence of \eqref{eq:introsesbrauer} that $M_H(v)$ is fine if and only if the divisibility of $v$ is $1$. This is important partially because of its implications for the derived categories of $S$ and $M_H(v)$. Short exact sequence \eqref{eq:introsesbrauer} is obtained from the well-known short exact sequence \cite[Proposition 6.4]{Muk87} 
\begin{equation}\label{eq:introsestranscendental}
    0\to T(S)\to T(M_H(v))\to \Z/\dv(v)\Z\to 0,
\end{equation}
which shows that $T(S)$ is Hodge isometric to $T(M_H(v))$ if and only if $M_H(v)$ is a fine moduli space. Recall the Derived Torelli Theorem for K3 surfaces:
\begin{theorem}[Derived Torelli Theorem]\cite{Muk87,Orl03} \label{thm:introderivedtorelli}
    Let $X$ and $Y$ be two K3 surfaces. The following are equivalent:
    \begin{enumerate}
        \item[i)] There is an equivalence $\Db(X)\simeq \Db(Y)$;
        \item[ii)] There is a Hodge isometry $T(X)\simeq T(Y)$;
        \item[iii)] The K3 surface $Y$ is isomorphic to a fine moduli space of sheaves on $X$.
    \end{enumerate}
\end{theorem}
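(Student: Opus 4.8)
The plan is to establish the cycle of implications (iii)\,$\Rightarrow$\,(i)\,$\Rightarrow$\,(ii)\,$\Rightarrow$\,(iii), the main tools being Fourier--Mukai theory, the structure of moduli spaces of sheaves with isotropic Mukai vector, the global Torelli theorem for K3 surfaces, and Nikulin's results on lattice embeddings. The first two implications are comparatively formal. For (iii)\,$\Rightarrow$\,(i): if $Y\cong M_H(v)$ is a fine moduli space of sheaves on $X$, there is an honest universal sheaf $\calE$ on $X\times Y$, and one checks that the Fourier--Mukai transform $\Phi_\calE\colon\Db(Y)\to\Db(X)$ is an equivalence. The fibres of $\calE$ are stable, hence simple, sheaves with $v^2=0$, so the relevant $\Ext$-groups between non-isomorphic fibres vanish, and a base-change computation identifies the composition of $\Phi_\calE$ with its adjoint with a shift of the identity; alternatively one applies the Bridgeland--Maciocia criterion. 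For (i)\,$\Rightarrow$\,(ii): by Orlov's representability theorem any equivalence $\Db(X)\simeq\Db(Y)$ is a Fourier--Mukai transform with kernel $\calE\in\Db(X\times Y)$, and the induced cohomological transform $\Phi^{\mathrm{H}}_\calE$, built from the Mukai vector $v(\calE)=\ch(\calE)\sqrt{\td}$ of the kernel, is a Hodge isometry of the full Mukai lattices $\widetilde H(X,\Z)\xrightarrow{\sim}\widetilde H(Y,\Z)$ with respect to the Mukai pairing and the weight-two Hodge structure with $\widetilde H^{2,0}=H^{2,0}$; since a Hodge isometry preserves the $(2,0)$-part, it carries the transcendental sublattice of $\widetilde H(X,\Z)$ isometrically onto that of $\widetilde H(Y,\Z)$, and these sublattices are canonically $T(X)$ and $T(Y)$.

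The substance is in (ii)\,$\Rightarrow$\,(iii). Given a Hodge isometry $\varphi\colon T(X)\xrightarrow{\sim}T(Y)$, the first step is to extend it to a Hodge isometry $\psi\colon\widetilde H(X,\Z)\xrightarrow{\sim}\widetilde H(Y,\Z)$ of Mukai lattices. This is possible because the orthogonal complement of the transcendental lattice inside the Mukai lattice is the algebraic Mukai lattice $H^0\oplus\NS(X)\oplus H^4$, which always contains a hyperbolic plane $U$; by Nikulin's theory of lattices and their discriminant forms the primitive embedding $T(X)\hookrightarrow\widetilde H(X,\Z)$ into the even unimodular lattice $\widetilde H(X,\Z)\cong U^{\oplus4}\oplus E_8(-1)^{\oplus2}$ is then essentially unique and $\varphi$ lifts. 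Set $v:=\psi^{-1}(0,0,1)$, the preimage of the Mukai vector of a point of $Y$. Then $v$ is primitive and isotropic, it is of type $(1,1)$ because $\psi$ is a Hodge isometry, hence algebraic, and its divisibility is $1$ because $\langle(0,0,1),(1,0,0)\rangle=-1$ and $\psi$ identifies the algebraic Mukai lattices. Choosing a polarisation $H$ on $X$ generic with respect to $v$, the moduli space $M:=M_H(v)$ is a K3 surface, and since $\dv(v)=1$ it is fine, so that \eqref{eq:introsestranscendental} already yields a Hodge isometry $T(X)\simeq T(M)$.

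It remains to produce an isomorphism $M\cong Y$. Applying (iii)\,$\Rightarrow$\,(i) to the pair $(X,M)$ and running the argument of (i)\,$\Rightarrow$\,(ii), the universal sheaf $\calE$ on $X\times M$ realises a Hodge isometry $\theta\colon\widetilde H(X,\Z)\xrightarrow{\sim}\widetilde H(M,\Z)$; since the fibres of $\calE$ have Mukai vector $v$, we may take $\theta(v)=(0,0,1)$. Then $\psi\circ\theta^{-1}\colon\widetilde H(M,\Z)\xrightarrow{\sim}\widetilde H(Y,\Z)$ is a Hodge isometry fixing the point class $(0,0,1)$, so it descends to a Hodge isometry on $(0,0,1)^{\perp}/(0,0,1)\cong H^2$, that is, a Hodge isometry $H^2(M,\Z)\xrightarrow{\sim}H^2(Y,\Z)$. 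Composing with $\pm\operatorname{id}$ and with reflections in $(-2)$-classes, which act trivially on the transcendental lattice, we may assume this Hodge isometry is effective; the global Torelli theorem for K3 surfaces then provides an isomorphism $Y\cong M=M_H(v)$, which is the content of (iii).

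The main obstacle is the step (ii)\,$\Rightarrow$\,(iii): it requires simultaneously the lattice-theoretic input that a Hodge isometry of transcendental lattices extends across the Mukai lattice (resting on the presence of a hyperbolic plane in the algebraic Mukai lattice), the input from the theory of moduli of sheaves that for a primitive isotropic Mukai vector and generic polarisation the moduli space is a nonempty smooth K3 surface whose Mukai lattice is matched with that of $X$ through the universal family, and finally the standard adjustment by $\pm\operatorname{id}$ and $(-2)$-reflections needed to make the induced Hodge isometry of the $H^2$'s effective before invoking global Torelli. The implications (iii)\,$\Rightarrow$\,(i) and (i)\,$\Rightarrow$\,(ii) are comparatively routine once Orlov's representability theorem and the fact that cohomological Fourier--Mukai transforms are Hodge isometries of Mukai lattices are taken as known.
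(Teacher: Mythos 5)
This theorem is stated in the paper purely as background, with citations to Mukai and Orlov; the paper gives no proof of it, so there is nothing internal to compare against. Your sketch is the standard argument from the literature (cf.\ Huybrechts' \emph{Lectures on K3 surfaces}, Ch.~16) and is essentially correct: (iii)$\Rightarrow$(i) via the universal sheaf and the Bridgeland--Maciocia/Bondal--Orlov criterion, (i)$\Rightarrow$(ii) via Orlov representability and the fact that the cohomological Fourier--Mukai transform is a Hodge isometry of Mukai lattices, and (ii)$\Rightarrow$(iii) via the Nikulin extension across the hyperbolic plane $H^0\oplus H^4$, the choice $v=\psi^{-1}(0,0,1)$, and global Torelli after adjusting by $\pm\operatorname{id}$ and $(-2)$-reflections.

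The only point you gloss over is the non-emptiness of $M_H(v)$: the vector $v=\psi^{-1}(0,0,1)$ is primitive, isotropic, algebraic and of divisibility $1$, but to invoke the existence theorem one must first arrange it in the form $(r,\ell,s)$ with $r>0$, or $r=0$ and $\ell$ effective, or $v=(0,0,1)$. This is always achievable by replacing $v$ with $-v$ (for $r=0$, $\ell\neq 0$ isotropic, Riemann--Roch forces $\ell$ or $-\ell$ to be effective), i.e.\ by composing $\psi$ with $-\operatorname{id}$, which costs nothing since one only needs \emph{some} Hodge isometry of transcendental lattices to be realised. Worth a sentence, but not a genuine gap.
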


The main goal of this paper is to extend C\u ald\u araru's results about Brauer groups to higher-dimensional moduli spaces of sheaves. Our main result is the following theorem. For a K3 surface $S$, we denote by $\N(S) \coloneqq H^0(S,\Z)\oplus \NS(S) \oplus H^4(S,\Z)$ the \textit{extended N\'eron--Severi lattice}.

\begin{theorem}[See Theorem \ref{thm:explicitobstruction}]\label{thm:introexplicitobstruction}
    Let $S$ be a K3 surface, and let $v\in \N(S)$ be a primitive Mukai vector of square $v^2>0$. Let $H\in \NS(S)$ be a generic polarisation on $S$, and write $M\coloneqq M_H(v)$. Then there is a short exact sequence $$0\to \langle\alpha\rangle\to \Br(M)\to \Br(S)\to 0,$$ where $\alpha$ is the obstruction class for $M$ to be a fine moduli space of sheaves on $S$. Moreover, the order of $\alpha$ in $\Br(M)$ is equal to the divisibility of $v$ in $\N(S)$. 
\end{theorem}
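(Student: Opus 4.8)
The plan is to reduce the theorem to a computation with lattices and Hodge structures, resting on three ingredients. First, for a smooth projective variety $X$ over $\CC$ with $b_1(X)=0$, $H^{0,3}(X)=0$ and $H^3(X,\Z)$ torsion-free, the Kummer sequence identifies $\Br(X)[n]$ with $\bigl(H^2(X,\Z)/\NS(X)\bigr)\otimes\Z/n$, hence $\Br(X)\cong\bigl(H^2(X,\Z)/\NS(X)\bigr)\otimes\Q/\Z$. Since, by the results recalled in the introduction, $M$ is deformation equivalent to a Hilbert scheme of points on a K3 surface, these hypotheses hold for $X=M$ as well as for $X=S$. Second, $\wt H(S,\Z)$ is unimodular and $\NS(S),\N(S)$ are saturated in it, so the Mukai pairing identifies $H^2(S,\Z)/\NS(S)$ with $T(S)^{\vee}$ and therefore $\Br(S)\cong\Hom(T(S),\Q/\Z)$. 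Third, I would invoke Yoshioka's Hodge isometry $\theta_v\colon v^{\perp}\xrightarrow{\sim}H^2(M,\Z)$, which matches $(1,1)$-classes with $(1,1)$-classes: it carries $v^{\perp}\cap\N(S)$ onto $\NS(M)$ and hence its orthogonal complement $T(S)\subseteq v^{\perp}$ onto $T(M)$.

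With these in place, set $W:=v^{\perp}\cap\N(S)=v^{\perp_{\N(S)}}$, so that $\theta_v$ gives $\Br(M)\cong\bigl(v^{\perp}/W\bigr)\otimes\Q/\Z$. The restriction of the Mukai pairing defines a map $v^{\perp}\to T(S)^{\vee}$ with kernel $T(S)^{\perp_{v^{\perp}}}$; since $W\subseteq T(S)^{\perp_{v^{\perp}}}$, the two have the same rank, and both are saturated in $v^{\perp}$, this kernel equals $W$. Hence there is an embedding $v^{\perp}/W\hookrightarrow T(S)^{\vee}$ with finite cokernel $C$, and tensoring $0\to v^{\perp}/W\to T(S)^{\vee}\to C\to 0$ with $\Q/\Z$ (using $C\otimes\Q/\Z=0$ and $\Tor_1(C,\Q/\Z)\cong C$) yields a short exact sequence $0\to C\to\Br(M)\to\Br(S)\to 0$. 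To evaluate $|C|$ I would work inside the unimodular lattice $\wt H(S,\Z)\supseteq\Z v\oplus W\oplus T(S)$; the only non-formal input is $[\N(S):\Z v\oplus W]=v^2/\dv(v)$, which falls out of the surjection $\N(S)\twoheadrightarrow\dv(v)\Z$, $w\mapsto(v,w)$, whose kernel is $W$ and which sends $\Z v$ onto $v^2\Z$. Combining this with $[\wt H(S,\Z):\Z v\oplus v^{\perp}]=v^2$ (primitivity of $v$ in a unimodular lattice) and $|\disc(\N(S))|=|\disc(T(S))|$, a short discriminant bookkeeping gives $|C|=\dv(v)$; moreover $C$ is a quotient of $\wt H(S,\Z)/(\Z v\oplus v^{\perp})\cong\Z/v^2$, so $C\cong\Z/\dv(v)$ is cyclic.

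The main obstacle is the last step, identifying the kernel $C$ with $\langle\alpha\rangle$, and this is where the argument must go beyond formal lattice theory and make the obstruction class explicit, extending C\u ald\u araru's cocycle computation from the case $v^2=0$. Here I would use a $(1\boxtimes\alpha)$-twisted universal sheaf $\calE$ on $S\times M$: its local presentations produce an explicit \v{C}ech $2$-cocycle representing $\alpha$, the Fourier--Mukai-type correspondence along $\calE$ realizes precisely the map $\Br(M)\to\Br(S)$ constructed above, and it sends this cocycle to a coboundary on $S$, so that $\alpha\in C$; the same explicit description is used to compute $\ord(\alpha)=\dv(v)$. Since $|C|=\dv(v)$ and $\langle\alpha\rangle\subseteq C$, these two facts force $\langle\alpha\rangle=C$, which is exactly the assertion of the theorem. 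Everything apart from the explicit control of $\alpha$ and of its order is routine once the dictionary of the first paragraph is in place, so that is the step I expect to require the most work.
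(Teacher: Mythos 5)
Your lattice-theoretic reduction is correct and coincides with the paper's own: the identification $\Br(M)\simeq (v^{\perp}/W)\otimes\Q/\Z$ with $W=v^{\perp}\cap\N(S)$, the embedding $v^{\perp}/W\hookrightarrow T(S)^{*}$, and the computation that the cokernel $C$ is cyclic of order $\dv(v)$ are exactly Lemma \ref{lem:sestranscendental} and Proposition \ref{prop:sesbrauerkn} (your discriminant bookkeeping can be shortcut: $C\simeq\tilde{H}(S,\Z)/(\N(S)+v^{\perp})\simeq\Z/\dv(v)\Z$ via pairing with $v$). The genuine gap is in the final step, which you yourself flag as the hard one but do not actually carry out. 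Showing that $\alpha$ lies in $C$ and, crucially, that it \emph{generates} $C$ --- equivalently that $\ord(\alpha)=\dv(v)$ rather than a proper divisor --- is precisely the new content of the theorem (it is what yields Corollary \ref{cor:fineiffdivone}); the sentence ``its local presentations produce an explicit \v{C}ech $2$-cocycle \dots\ the same explicit description is used to compute $\ord(\alpha)=\dv(v)$'' is an assertion, not an argument. The intermediate claim that the Fourier--Mukai correspondence along $\calE$ ``realizes precisely the map $\Br(M)\to\Br(S)$'' is also unproven, and is not how the paper proceeds.

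The paper's actual mechanism for this step is a deformation argument you would need to reconstruct or replace: one deforms $(S,v,H)$ inside a moduli space of lattice-polarised K3 surfaces to a point where $\dv(v)=1$ (this requires the density statement of Proposition \ref{prop:densitycorankone} and Lemma \ref{lem:divisibilityinfamilies}), so that the relative moduli space is fine over that fibre; C\u ald\u araru's theorem on Brauer classes of twisted sheaves in families (Theorem \ref{thm:CaldaIdentificationBrauerClassFamily}) then computes $\alpha$ on the original fibre as $-c_1(\calF_1)/\rho$, and the topological invariance of the cohomological Fourier--Mukai transform across the family, together with the integrality of $[\varphi'(u)]_2$ (obtained by choosing $u=v(\calL)$ for a line bundle $\calL$ and arranging $\Phi'(\calL)$ to be locally free), identifies $\alpha$ with the image of $[\varphi(u)]_2$, i.e.\ with the generator $w/\dv(v)$ of $C$. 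A direct cocycle computation ``extending C\u ald\u araru's computation from the case $v^2=0$'' does not transfer verbatim: his identification in the two-dimensional case leans on the cyclic extension $T(S)\subset T(M)$, which degenerates to an isomorphism when $v^2>0$. Until you supply an argument pinning $\alpha$ down to a generator of $C$, the theorem is not proved.
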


Theorem \ref{thm:introexplicitobstruction} should be compared to \cite[Proposition 6.6]{KK24}, where the obstruction class is computed for four-dimensional moduli spaces of twisted sheaves using a different method which only applies in dimension four.

Theorem \ref{thm:introexplicitobstruction} is completely analogous to \eqref{eq:introsesbrauer}. In particular, it also allows us to easily determine whether a given moduli space is fine or not.

Let us say a few words on the proof of Theorem \ref{thm:introexplicitobstruction}. The proof is very closely related to C\u ald\u araru's proof in the case $v^2=0$. Some modifications are needed in the higher-dimensional setting. Most notably, short exact sequence \eqref{eq:introsestranscendental} has no analogue in the higher-dimensional setting, as in this case there is always a Hodge isometry $T(S)\simeq T(M)$. The way to derive \eqref{eq:introsesbrauer} from \eqref{eq:introsestranscendental} is by using the fact that $\Br(S)\simeq \operatorname{Hom}(T(S),\Q/\Z) \simeq T(S)^*\otimes \Q/\Z$ for a K3 surface $S$. However, this is not the case for higher-dimensional moduli spaces of sheaves on $S$. In this case, there is a certain subgroup $T'(M)\subset T(S)^*$, introduced in \cite{HM23}, for which there exists an isomorphism $\Br(M)\simeq T'(M)\otimes \Q/\Z$. The quotient $T(S)^*/T'(M)$ is cyclic of order $\dv(v)$, thus there is a short exact sequence $$0\to T'(M)\to T(S)^*\to \Z/\dv(v)\Z\to 0,$$ which gives rise to the short exact sequence of Theorem \ref{thm:introexplicitobstruction}.

The proof that the kernel of the short exact sequence of Theorem \ref{thm:introexplicitobstruction} is generated by the obstruction class goes via a deformation argument very similar to C\u ald\u araru's argument in the two-dimensional case. If $M$ is a fine two-dimensional moduli space of sheaves on $S$, then Mukai's and Orlov's work shows that the Fourier--Mukai equivalence associated to the universal sheaf on $S\times M$ is a Hodge isometry $\tilde{H}(S,\Z)\simeq \tilde{H}(M,\Z)$, where $\tilde{H}(S,\Z)$ denotes the Mukai lattice. If, on the other hand, $M$ has dimension $\dim M>2$, then such a statement does not hold, but we could achieve the same result using the \textit{Mukai pairing} on $H^*(M,\Q)$, which was introduced by C\u ald\u araru in \cite{Cal03}, which was later superseded by C\u ald\u araru and Willerton in \cite{CW10}.

\subsection*{Applications} 
Let us now explain some applications of Theorem \ref{thm:introexplicitobstruction}.
Firstly, one can apply \eqref{eq:introsesbrauer} in Ogg--Shafarevich theory for elliptic K3 surfaces. For an elliptic K3 surface with a section $S\to \PP^1$, one can consider the Tate--Shafarevich group $\Sha(S)$ consisting of elliptic K3 surfaces $X\to \PP^1$ with a fixed isomorphism between the Jacobian fibration $\Jac^0(X)\to \PP^1$ and $S\to \PP^1$. Such elliptic K3 surfaces are called \textit{torsors} of $S\to \PP^1$. The Tate--Shafarevich group of an elliptic K3 surface with a section is naturally isomorphic to its Brauer group, with the isomorphism being given by sending a torsor $X\to \PP^1$ to the obstruction to the existence of a universal sheaf on the product $X\times S$ by \cite[Theorem 4.4.1]{Cal00}. This description of the isomorphism 
\begin{equation}\label{eq:introoggshafarevich}
    \Sha(S)\simeq \Br(S)
\end{equation}
was used for example in \cite{MS24} to study derived equivalences between elliptic K3 surfaces.

Tate--Shafarevich groups were recently generalised to the higher-dimensional case \cite{Mar14,AR23,HM23}. For our applications, we rely on the new theory of Tate--Shafarevich groups introduced in \cite{HM23}. For $S$ a K3 surface, and $\calC\to |H|$ a complete, generically smooth linear system with $H^2 = 2g-2 \geq 0$, we may consider the relative Picard varieties $\olPic^d(\calC/|H|)\coloneqq M_L(0,H,d+1-g)$, where $d\in \Z$, and $L\in \NS(S)$ is a generic polarisation. These relative Picard varieties are called \textit{Beauville--Mukai systems}, and we usually denote them by $\olPic^d$ to keep notation light. For $d=0$, the generic fibre $\olPic^0_\eta$ is an abelian variety, and the Tate--Shafarevich group $\Sha(S,H)$, introduced in \cite{HM23}, parametrises those torsors of $\olPic^0_\eta$ which compactify to moduli spaces of (twisted) sheaves on $S$.

We prove that, for any $d\in \Z$, there is a short exact sequence $$0\to \Z/n\Z\to \Sha(S,H)\to \Br(\olPic^d)\to 0,$$ where $$n = \frac{\dv(H)}{\gcd(\dv(H),d+1-g)}.$$ Somewhat surprisingly, the morphism $\Sha(S,H)\to \Br(\olPic^0)$ is generally not an isomorphism, but instead has a cyclic kernel of order $2$ if the Picard rank of $S$ is 1. This is surprising because it contrasts with the isomorphism \eqref{eq:introoggshafarevich} for elliptic K3 surfaces. On the other hand, we have an isomorphism $\Sha(S,H)\simeq \Br(\olPic^{g-1})$. This isomorphism may be seen as a higher-dimensional analogue of \eqref{eq:introoggshafarevich}. 
Indeed, for an elliptic K3 surface $S\to \PP^1$ with a section and with fibre class $F\in \NS(S)$, we have $\Sha(S,F)\simeq \Sha(S)$ by \cite[\S5.1]{HM23}, and $\olPic^{g-1} = \olPic^0 = \Jac^0(S)\simeq S$,  see \cite[\S11.4]{Huy16} for details. 

Moreover, using our explicit computation of the obstruction class of Theorem \ref{thm:introexplicitobstruction}, we show that for any $d\in \Z$, the morphism $\Sha(S,H)\to \Br(\olPic^d)$ maps the class $[\olPic^1]$ to the obstruction $\alpha_d\in \Br(\olPic^d)$ to the existence of a universal sheaf on $S\times \olPic^d$. Using this fact, we explain that a result by Addington, Donovan and Meachan \cite{ADM16} should be seen as an analogue to a result by Donagi and Pantev \cite{DP08}, see Theorem \ref{thm:ADM} and Theorem \ref{thm:donagi--pantev}, respectively.

Lastly, we study birational equivalence of moduli spaces of sheaves on elliptic K3 surfaces, for which we prove the following theorem.

\begin{theorem}[See Theorem \ref{thm:noderivedtorelli}] \label{thm:intronoderivedtorelli}
    Let $S$ be an elliptic K3 surface with a section, and let $M$ be a moduli space of sheaves on $S$ of dimension $2n$. Consider the following three statements.
    \begin{itemize}
        \item[i)] $M$ is a fine moduli space.
        \item[ii)] $M$ is birational to the Hilbert scheme $S^{[n]}$.
        \item[iii)] $M$ is derived equivalent to the Hilbert scheme $S^{[n]}$.
    \end{itemize}
    Then we have $$i) \iff ii) \implies iii).$$ 
\end{theorem}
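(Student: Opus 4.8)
The plan is to prove the three implications ii) $\Rightarrow$ i), i) $\Rightarrow$ ii) and ii) $\Rightarrow$ iii), which together yield i) $\Leftrightarrow$ ii) $\Rightarrow$ iii). Throughout write $M=M_H(v)$ with $v\in\N(S)$ primitive of square $v^{2}=2n-2$ and $H$ generic, and set $v_0\coloneqq(1,0,1-n)\in\N(S)$, so that $S^{[n]}=M_H(v_0)$.

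For ii) $\Rightarrow$ i) I would proceed lattice-theoretically. A birational map between the $\mathrm{K3}^{[n]}$-type manifolds $M$ and $S^{[n]}$ induces a parallel-transport Hodge isometry $H^{2}(M,\Z)\xrightarrow{\sim}H^{2}(S^{[n]},\Z)$; by Markman's description of parallel-transport operators for moduli spaces of sheaves, this extends to a Hodge isometry of the Mukai Hodge structures $\tilde{H}(S,\Z)\xrightarrow{\sim}\tilde{H}(S,\Z)$ sending $v$ to $\pm v_0$. Since a Hodge isometry preserves the integral $(1,1)$-part, it restricts to an isometry of $\N(S)$ sending $v$ to $\pm v_0$, whence $\dv(v)=\dv(v_0)=1$; by Theorem~\ref{thm:introexplicitobstruction} the obstruction class then has order $1$, i.e.\ $M$ is fine. (This implication uses no elliptic structure.)

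For i) $\Rightarrow$ ii) I would assume $\dv(v)=1$ and exploit the extra hyperbolic plane coming from the elliptic fibration: the span of the section and fibre classes is a copy of $U$ splitting off $\NS(S)$ orthogonally, so $\N(S)\cong U^{\oplus 2}\oplus K$ with $K$ negative definite. As $v$ and $v_0$ are primitive, of the same square, and of divisibility $1$ (hence with trivial image in the discriminant group), Eichler's criterion gives an isometry in $\tilde{O}(\N(S))$ taking $v$ to $v_0$; acting trivially on the discriminant group it extends to a Hodge isometry $g$ of $\tilde{H}(S,\Z)$ that is the identity on $T(S)$. Composing if necessary with the Hodge isometry acting as $-\Id$ on the section-fibre summand $\cong U$ and trivially elsewhere---which reverses the orientation of the positive-definite four-space while fixing $v_0$---we may assume $g$ is orientation-preserving, and then by the description of $\Aut(\Db(S))$ due to Orlov and to Huybrechts--Macr\`i--Stellari, $g$ is realised by a Fourier--Mukai autoequivalence $\Phi$ of $\Db(S)$. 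For a generic Bridgeland stability condition $\sigma$ in the Gieseker chamber of $v$, the autoequivalence $\Phi$ identifies $M=M_\sigma(v)$ with the moduli space of $(\Phi\cdot\sigma)$-stable objects of class $v_0$ on $S$, and by Bayer--Macr\`i's wall-and-chamber analysis this is birational to $M_H(v_0)=S^{[n]}$.

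For ii) $\Rightarrow$ iii) I would invoke a derived-equivalence input: Halpern--Leistner's theorem that birational moduli spaces of Gieseker-semistable sheaves on a K3 surface are derived equivalent applies directly to the birational pair $(M,S^{[n]})$; alternatively, staying inside the elliptic world, one can combine the identification $M\cong M_{\Phi\cdot\sigma}(v_0)$ obtained above with the derived equivalences of Theorem~\ref{thm:ADM} (or with the Fourier--Mukai equivalences obtained by crossing the flopping walls linking $M_{\Phi\cdot\sigma}(v_0)$ and $S^{[n]}$). The routine ingredients are the lattice theory---Eichler's criterion and the splitting of $U$ off $\NS(S)$---together with the comparison of Gieseker and Bridgeland moduli in the Gieseker chamber. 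I expect the main difficulty to be the orientation bookkeeping in i) $\Rightarrow$ ii), that is, ensuring that the lattice isometry $v\mapsto v_0$ is realised by an honest autoequivalence and not merely by an isometry, and---for iii)---the correct invocation of the wall-crossing derived-equivalence results, together with the check via Bayer--Macr\`i's classification of walls that for primitive $v_0$ of positive square the relevant birational models are linked by flops rather than by divisorial or more degenerate transformations.
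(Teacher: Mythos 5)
Your proposal is correct, and two of the three implications coincide with the paper's argument: $ii)\Rightarrow i)$ is exactly the paper's proof (Birational Torelli gives a Hodge isometry of $\tilde{H}(S,\Z)$ carrying $v$ to $\pm v_0$, hence $\dv(v)=\dv(v_0)=1$, hence fineness by Corollary \ref{cor:fineiffdivone}), and $ii)\Rightarrow iii)$ is the same citation of Halpern--Leistner. The interesting difference is in $i)\Rightarrow ii)$. You and the paper share the lattice-theoretic core --- the extra copy of $U$ in $\NS(S)$ lets one apply the Eichler criterion (Lemma \ref{lem:eichlertransvections}) to move $v$ to $v_0=(1,0,1-n)$ by an isometry of $\N(S)$ acting trivially on the discriminant, which extends by the identity on $T(S)$ to a Hodge isometry of $\tilde{H}(S,\Z)$ (Lemma \ref{lem:nikulinorthogonal}/\ref{lem:extendisometryhyperbolic}); the paper phrases this via the vanishing of the C\u ald\u araru class $a_v=v/\dv(v)=0$ in Propositions \ref{prop:birationaliffcaldararuclass} and \ref{prop:finemodulispacebirationaltohilbertscheme}. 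But where the paper then simply feeds this ambient isometry into Markman's Birational Torelli Theorem \ref{thm:birationaltorelli} (the induced map $v^\perp\simeq v_0^\perp$ is automatically compatible with the canonical orbits of embeddings \eqref{eq:naturalembeddingformoduli}), you instead realise the isometry by an autoequivalence of $\Db(S)$ after an orientation correction, identify $M$ with a Bridgeland moduli space of class $v_0$, and invoke Bayer--Macr\`i wall-crossing. Your route is sound --- the orientation fix by $-\operatorname{id}$ on the section--fibre copy of $U$, which is orthogonal to $v_0$ and acts trivially on $T(S)$, does work --- but it imports the surjectivity of $\Aut(\Db(S))\to O^+_{\mathrm{Hodge}}(\tilde{H}(S,\Z))$ and the full wall-and-chamber machinery where the paper needs only the Torelli-type statement; the one thing your detour buys is an explicit derived equivalence witnessing $iii)$ without a separate appeal to Halpern--Leistner. (A cosmetic point: the body version, Theorem \ref{thm:noderivedtorelli}, assumes $2n>2$; for $n=1$ the paper argues separately via Fourier--Mukai partners, which your uniform treatment glosses over.)
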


The implication $iii)\implies ii)$ in Theorem \ref{thm:noderivedtorelli} was claimed in \cite{Bec22}, but the proof contains a mistake and does not hold in full generality. For instance, Zhang proves in \cite[Corollary 1.5]{Ruxuan} that all moduli spaces of dimension $4$, whether fine or not, on a fixed K3 surface are derived equivalent. In dimension greater than $4$, we have the following result, which gives a counterexample to a question raised in \cite[Problem 1.1]{KK24}, and a similar question raised in \cite[Question 2]{PR23}.

\begin{theorem}[See Theorem \ref{thm:coexampleDerTorelli}]
    For every $n\geq 3$, there exist moduli spaces of sheaves $M$ and $M'$ of dimension $2n$, on the same K3 surface, for which there is a Hodge isometry $T(M)\simeq T(M')$, but which are not derived equivalent.
\end{theorem}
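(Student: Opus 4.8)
The plan is to write down the pair explicitly on a Picard-rank-one K3 surface and then to reduce the non-existence of a derived equivalence to the invariance of divisibility under isometries. Fix $n\ge 3$ and choose a K3 surface $S$ with $\NS(S)=\Z H$ and $H^2=2n+6$ (such $S$ exist by surjectivity of the period map); since $\rho(S)=1$, every ample class is a generic polarisation for every Mukai vector. I would take $v=(1,0,1-n)\in\N(S)$, so that $M\coloneqq M_H(v)=S^{[n]}$ is a fine moduli space of dimension $v^2+2=2n$ with $\dv(v)=1$, and $v'=(2,H,2)\in\N(S)$, which is primitive with $(v')^2=H^2-8=2n-2$; then $M'\coloneqq M_H(v')$ is a smooth projective hyperk\"ahler manifold of dimension $2n$, and $\dv(v')=\gcd(2,2,H^2)=2$, so $M'$ is not a fine moduli space by Theorem~\ref{thm:introexplicitobstruction}.

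First I would dispatch the easy half. Both $M$ and $M'$ are moduli spaces of sheaves on $S$ attached to Mukai vectors of positive square, and as recalled in the introduction such moduli spaces always satisfy $T(M_H(w))\simeq T(S)$ — concretely $T(M_H(w))=T(S)$ inside $H^2(M_H(w),\Z)=w^{\perp}\subset\tilde H(S,\Z)$, since $w$ is algebraic. Composing these Hodge isometries gives $T(M)\simeq T(S)\simeq T(M')$. (One may also note that $\NS(M)$ and $\NS(M')$ have non-proportional discriminants, so $M$ and $M'$ are not even birational; the point is that they are not derived equivalent.)

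For the main assertion, suppose $\Db(M)\simeq\Db(M')$. Since $v^2=(v')^2=2n-2\ge 4$, I would invoke the structural description of derived equivalences between moduli spaces of sheaves on K3 surfaces: such an equivalence induces a Hodge isometry $\tilde H(S,\Z)\simeq\tilde H(S',\Z)$ of Mukai lattices (for suitable Fourier--Mukai partners $S,S'$ of the two underlying surfaces) identifying the Mukai vectors $v$ and $v'$ up to sign and the action of the monodromy group. Every operator in this chain — the Hodge isometry of Mukai lattices, the sign and monodromy operators, and the cohomological action of any Fourier--Mukai equivalence of K3 surfaces — is an isometry, hence restricts to an isometry of the algebraic $(1,1)$-parts, i.e.\ of the extended N\'eron--Severi lattices, so the divisibility of the Mukai vector is transported unchanged. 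This forces $1=\dv(v)=\dv(v')=2$, a contradiction; hence $M$ and $M'$ are not derived equivalent.

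The hard part is exactly the structural input used above: that for $2n$-dimensional moduli spaces of sheaves on K3 surfaces with $n\ge 3$ a derived equivalence is ``cohomologically standard'', inducing a Hodge isometry of Mukai lattices compatible with the Mukai vectors and therefore preserving divisibility. This genuinely fails in dimension four — by \cite{Ruxuan} all four-dimensional moduli spaces on a fixed K3 surface are derived equivalent, fine or not — so the hypothesis $v^2\ge 4$, equivalently $n\ge 3$, is essential, and one must formulate the statement with care (and stay clear of the gap in the stronger birationality claim of \cite{Bec22}). Everything else — the existence of $S$, the computation of $(v')^2$ and $\dv(v')$, and the Hodge isometry of transcendental lattices — is routine.
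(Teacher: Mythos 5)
The easy half of your argument (both transcendental lattices are $T(S)$) and your choice of Mukai vectors $v=(1,0,1-n)$, $v'=(2,H,2)$ with $v^2=(v')^2=2n-2$, $\dv(v)=1$, $\dv(v')=2$ are fine. The problem is the ``structural input'' you yourself flag as the hard part: the assertion that for $n\geq 3$ a derived equivalence $\Db(M)\simeq\Db(M')$ between moduli spaces of sheaves induces a Hodge isometry of the Mukai lattices $\tilde H(S,\Z)$ of the underlying K3 surfaces carrying $v$ to $v'$ up to sign and monodromy. This is not a theorem in the literature, and it is essentially the claim whose proof in \cite{Bec22} this paper points out is flawed: if a derived equivalence did transport the Mukai vectors in this way, the Birational Torelli Theorem \ref{thm:birationaltorelli} would immediately give the implication $iii)\implies ii)$ of Theorem \ref{thm:noderivedtorelli}, which the paper deliberately does \emph{not} assert for any $n$. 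So your proof assumes, without argument, a statement at least as strong as the one whose failure of proof motivates this whole section; as written the argument is circular in spirit and has a genuine gap at its only nontrivial step.

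What is actually available is Beckmann's Theorem 9.2 in \cite{Bec22}: a derived equivalence induces a Hodge isometry of the rank-$(b_2+2)$ \emph{extended Mukai lattices} $\widetilde{\Lambda}_M=H^2(M,\Z)\oplus U(-1)$ with their twisted Hodge structures --- not of the K3 surface's Mukai lattice, and with no identification of Mukai vectors. The paper's proof uses exactly this weaker but correct input: it takes $M=\olPic^{g-1}=M(0,H,0)$ and $M'=\olPic^{g}=M(0,H,1)$ on a Picard-rank-one $(S,H)$ with $H^2=2g-2\geq 4$, computes $\NS(M)\simeq U$ and $\NS(M')=\langle(0,0,1),(2g-2,H,0)\rangle$, and then shows $\disc\widetilde{\Lambda}^{1,1}_{M}=4$ while $\disc\widetilde{\Lambda}^{1,1}_{M'}=(2g-2)^2$, so the $(1,1)$-parts are non-isometric and no derived equivalence can exist. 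To repair your argument you would either have to prove your structural claim (which would be a substantial new result, false in dimension four by \cite{Ruxuan} and open in general), or replace it with a computable derived invariant such as the discriminant of $\widetilde{\Lambda}^{1,1}$ and verify that it distinguishes your pair $(v,v')$.
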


We also apply our results on birational equivalence for moduli spaces of sheaves to Beauville--Mukai systems. Our main result in this direction is the following theorem, which fully describes birational equivalences of Beauville--Mukai systems in this setting.
\begin{theorem}[See Theorem \ref{thm:birationalequivalencebeauville--mukaioverelliptick3surfaces}]
    Let $S$ be an elliptic K3 surface with a section. Let $\calC\to |H|$ be a generically smooth complete linear system on $S$, and let $H^2 = 2g-2>0$. Assume $H$ is primitive in $\NS(S)$. The following are equivalent:
    \begin{itemize}
        \item[i)] $\olPic^d$ is birational to $\olPic^e$.
        \item[ii)] The obstruction classes of $\olPic^d$ and $\olPic^e$ have the same order. 
        \item[iii)] The Mukai vectors of $\olPic^d$ and $\olPic^e$ have the same divisibility. More precisely, we have $\gcd(\dv(H),d+1-g) = \gcd(\dv(H),e+1-g)$.
    \end{itemize}    
\end{theorem}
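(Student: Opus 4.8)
The plan is to deduce the theorem, via Theorem~\ref{thm:introexplicitobstruction}, from the general birational classification of moduli spaces of sheaves on a K3 surface, leaving one lattice‑theoretic computation that uses the elliptic structure and the primitivity of $H$. Throughout write $v_d=(0,H,d+1-g)\in\N(S)$ for the Mukai vector of $\olPic^d$, so that $\olPic^d=M_L(v_d)$ and $v_d^{2}=H^{2}=2g-2>0$; since $H$ is primitive in $\NS(S)$, each $v_d$ is a primitive element of $\N(S)$, and all the $\olPic^d$ are hyperk\"ahler manifolds of one and the same dimension $2g$. The equivalence $(ii)\Leftrightarrow(iii)$ is immediate: by Theorem~\ref{thm:introexplicitobstruction} the order of the obstruction class of $\olPic^d$ is the divisibility $\dv_{\N(S)}(v_d)$, and pairing $v_d$ with a general $(a,D,b)\in\N(S)$ gives $H\cdot D-(d+1-g)a$, so this divisibility equals $k_d:=\gcd(\dv(H),\,d+1-g)$. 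Thus both $(ii)$ and $(iii)$ assert exactly $k_d=k_e$, and it remains to prove $(i)\Leftrightarrow(k_d=k_e)$.

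For this I would invoke the following fact, assembled from the work of Mukai, O'Grady, Yoshioka, Bayer--Macr\`i and Orlov's derived Torelli theorem (in the orientation‑sensitive form of Huybrechts--Macr\`i--Stellari): for primitive Mukai vectors $v,v'$ with $v^{2}=v'^{2}>0$ on a K3 surface $S$, and for generic polarisations, $M_{H}(v)$ and $M_{H'}(v')$ are birational if and only if some autoequivalence $\Phi$ of $\Db(S)$ satisfies $\Phi_{*}v=v'$, equivalently there is an orientation‑preserving Hodge isometry of the Mukai lattice $\tilde{H}(S,\Z)$ taking $v$ to $v'$; in particular any such isometry preserves the algebraic lattice $\N(S)=\tilde{H}^{1,1}(S,\Z)$. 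Granting this, $(i)\Rightarrow(iii)$ is short: a birational equivalence $\olPic^d\dashrightarrow\olPic^e$ produces an isometry of $\tilde{H}(S,\Z)$ restricting to an isometry of $\N(S)$ that carries $v_d$ to $v_e$, and an isometry preserves divisibility, so $k_d=\dv_{\N(S)}(v_d)=\dv_{\N(S)}(v_e)=k_e$.

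For $(iii)\Rightarrow(i)$, suppose $k_d=k_e=:k$. By the fact just quoted it suffices to construct an orientation‑preserving Hodge isometry of $\tilde{H}(S,\Z)$ taking $v_d$ to $v_e$; and since $v_d,v_e\in\N(S)$ while $T(S)=\N(S)^{\perp}$ inside the unimodular lattice $\tilde{H}(S,\Z)$, it is enough to find $\kappa\in O(\N(S))$ with $\kappa(v_d)=v_e$ acting trivially on the discriminant group $A_{\N(S)}$ and lying in the orientation‑preserving subgroup, for then $\kappa\oplus\mathrm{id}_{T(S)}$ extends to such an isometry of $\tilde{H}(S,\Z)$. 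Here the elliptic structure enters: because $S\to\PP^{1}$ has a section, $\NS(S)$ contains the unimodular sublattice $\langle\sigma,f\rangle$, which is a hyperbolic plane, so $\N(S)\cong U^{\oplus2}\oplus\Lambda_{0}$ for an even lattice $\Lambda_{0}$ and Eichler's criterion applies: the subgroup of $O(\N(S))$ generated by Eichler transvections — which is contained in the orientation‑preserving, discriminant‑trivial subgroup — acts transitively on the primitive vectors of $\N(S)$ of a fixed square and a fixed image in $A_{\N(S)}$. Now $v_d$ and $v_e$ have the same square $H^{2}$, and one checks that $v_d/k\in\N(S)^{*}$ has image $[H/k]$ in $A_{\N(S)}=A_{\NS(S)}$, an element depending only on $k=k_d=k_e$; hence $v_d$ and $v_e$ lie in one Eichler orbit, and the construction is complete. (Primitivity of $H$ is also what forces $[H/k_d]\neq[H/k_e]$ when $k_d\neq k_e$, since $H/k_d-H/k_e\in\NS(S)$ would give $(k_e-k_d)/(k_dk_e)\in\Z$, impossible for distinct positive integers; this yields a second, entirely lattice‑theoretic proof of $(i)\Rightarrow(iii)$, independent of the obstruction class.)

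The principal obstacle is the birational classification quoted above, and above all the implication that a birational equivalence between these moduli spaces arises from an autoequivalence of $\Db(S)$. Proving this in the required generality means combining Huybrechts' Hodge‑theoretic description of birational hyperk\"ahler manifolds, Mukai's isometry $v^{\perp}\simeq H^{2}(M_{H}(v),\Z)$, a lifting of the resulting Hodge isometry to the unimodular Mukai lattice, Orlov's derived Torelli theorem with the orientation refinement, and Bayer--Macr\`i's wall‑crossing for Bridgeland stability conditions — while keeping careful track of orientations and of the passage between Gieseker and Bridgeland moduli. By comparison the lattice computation in the third paragraph is routine once Eichler's criterion is in hand, and for the fine case $k=1$ one may instead invoke Theorem~\ref{thm:intronoderivedtorelli} directly, by which every fine Beauville--Mukai system of dimension $2g$ is birational to the Hilbert scheme $S^{[g]}$ and hence to every other such system.
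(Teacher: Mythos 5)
Your proof is correct, and its lattice-theoretic core coincides with the paper's: the equivalence $ii)\iff iii)$ via Theorem \ref{thm:explicitobstruction}, the computation that the C\u ald\u araru class of $v_d$ is $[H/k]$ with $k=\gcd(\dv(H),d+1-g)$ (this is Lemma \ref{lem:caldararuclassbm}), and the use of Eichler's criterion (Lemma \ref{lem:eichlertransvections}) together with Nikulin gluing to move $v_d$ to $v_e$ by an isometry acting trivially on the discriminant and extending by the identity on $T(S)$ (this is the content of Proposition \ref{prop:birationaliffcaldararuclass}). The one genuine difference is the bridge between ``birational'' and ``Mukai-lattice isometry sending $v_d$ to $v_e$.'' You route this through autoequivalences of $\Db(S)$, which forces you to invoke the orientation-sensitive derived Torelli theorem and Bayer--Macr\`i wall-crossing, and you rightly flag this as the principal obstacle. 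The paper avoids all of that by using Markman's Birational Torelli Theorem \ref{thm:birationaltorelli} directly: two \Kn-type manifolds are birational if and only if there is a Hodge isometry of their $H^2$'s compatible with the canonical embeddings into $\tilde{\Lambda}$, and for moduli spaces that embedding is $H^2(M(v),\Z)\simeq v^\perp\hookrightarrow\tilde{H}(S,\Z)$ via the Mukai morphism. So a Hodge isometry of $\tilde{H}(S,\Z)$ with $\Phi(v_d)=\pm v_e$ is exactly what is needed, in both directions, with no orientation bookkeeping and no passage through stability conditions; in particular $i)\implies iii)$ is simply ``isometries preserve divisibility'' (or, as in the paper, preserve the order of $\omega_d$ in $A_{T(S)}$). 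Your argument is thus correct but rests on a strictly heavier black box than necessary; substituting Markman's theorem for the autoequivalence statement turns your sketch into the paper's proof.
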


The implication $i)\implies ii)$ is a general fact about birational moduli spaces \cite[Proof of Lemma 9.10]{Bec22}. The equivalence $ii)\iff iii)$ is an easy consequence of Theorem \ref{thm:introexplicitobstruction}, and the implication $ii)\implies i)$ uses our explicit computation of the obstruction class.

\subsection*{Acknowledgements}
We warmly thank Mauro Varesco, Giacomo Mezzedimi, Daniel Huybrechts, and Lenny Taelman for helpful discussions and interest in our work. We are also thankful to Evgeny Shinder for many useful comments on an earlier draft of this paper. We would also like to express our gratitude to Nick Addington for incredibly helpful discussions about Lagrangian fibrations. We thank Ruxuan Zhang for pointing out a mistake in the first version of the paper and for many fruitful conversations. The second author wishes to thank the Hausdorff Center for Mathematics in Bonn for their hospitality and inspiring academic environment during his stay in the final two years of his PhD.

\section{Lattices and Brauer groups}

\subsection{Lattices}
Our primary source for lattice theory is \cite{Nik80}. A lattice is a free, finitely generated abelian group $L$ paired with a symmetric non-degenerate bilinear form $b: L\times L\rightarrow \Z$. We define the corresponding quadratic form $q(x) = b(x,x)$. We always denote $x \cdot y$ for $b(x,y)$ and $x^2$ for $q(x)$.  We call a lattice $L$ even if $x^2$ is even for every $x\in L$. For our considerations, all lattices are presumed to be even. 

A morphism of lattices between $(L,b)$ and $(L',b')$ is a group homomorphism $f: L\rightarrow L'$ that preserves the bilinear forms, i.e., that satisfies $b(x,y) = b'(f(x),f(y))$ for all $x,y\in L$. A bijective morphism between lattices is called an isometry. The group of isometries $L\simeq L$ is denoted $O(L)$.

A lattice embedding $N\hookrightarrow L$ is a primitive embedding if the quotient $L/N$ is torsion-free. Additionally, a vector $v\in L$ is primitive if $\langle v\rangle\hookrightarrow L$ is a primitive embedding.

\begin{definition}
    For any vector $v\in L$, the number $$\dv(v)\coloneqq \operatorname{gcd}_{u\in L}\left(u\cdot v\right)$$ is called the \textit{divisibility} of $v$.
\end{definition}

The dual of a lattice $L$ is the group $L^*\coloneqq\operatorname{Hom}(L,\Z)$. We can characterise the dual as a subgroup of $L\otimes \Q$ as follows:
\begin{equation}\label{eq:characterisingduallattice}
L^* \simeq \left\{x\in L\otimes \Q\mid \forall y\in L: x\cdot y\in \Z \right\} \subseteq L\otimes \Q.
\end{equation}
Via this characterisation, $L^*$ acquires a bilinear form taking values in $\Q$. 
There is a natural embedding $L\hookrightarrow L^*$ given by $$\ell\mapsto (\ell\cdot-).$$
The quotient $L^*/L = A_L$ is called the discriminant group of $L$. 
If the discriminant group is trivial, we call $L$ unimodular.
There is a natural quadratic form 
$\overline{q}: A_L\rightarrow \Q/2\Z$ induced by the quadratic form on $L^*$. The pair $(A_{L},\overline{q})$ is usually called the discriminant lattice (even though it is a finite group).
Any isometry $f: L\simeq L$ induces an isometry of rational quadratic forms $L\otimes \Q\simeq L\otimes \Q$, which induces an isometry of the discriminant lattice $\overline{f}:A_{L}\simeq A_{L}$. This gives a natural group homomorphism $$O(L)\to O(A_L).$$
Note that there is a natural embedding \begin{equation}\label{eq:discriminantembedding}
i_L:A_L\hookrightarrow L\otimes \Q/\Z,
\end{equation} induced by \eqref{eq:characterisingduallattice}. If we denote by $f_{\Q/\Z}:L\otimes\Q/\Z\simeq L\otimes \Q/\Z$ the isomorphism induced by $f\in O(L)$, then $$i_L\circ \overline{f} = f_{\Q/\Z}|_{i_L(A_L)}.$$ For any primitive sublattice $N\hookrightarrow L$, we denote $$i_{N,L}:A_N\hookrightarrow N\otimes \Q/\Z\hookrightarrow L\otimes \Q/\Z.$$ 

\begin{lemma}\label{lem:intersectingspecialbrauer}
    Let $L$ be a unimodular lattice, and let $N\hookrightarrow L$ be a primitive sublattice. Write $T\coloneqq N^\perp\subset L$ for the orthogonal complement. Then, in $L\otimes \Q/\Z$, we have $$(T\otimes \Q/\Z)\cap (N\otimes \Q/\Z) = i_{T,L}(A_{T}) = i_{N,L}(A_{N}).$$
\end{lemma}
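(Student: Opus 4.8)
The plan is to carry out the whole computation inside the rational vector space $V\coloneqq L\otimes\Q$, under the identification $L\otimes\Q/\Z=V/L$. Since $N$ is primitive, $L\cap(N\otimes\Q)=N$, so the natural map $N\otimes\Q/\Z\to L\otimes\Q/\Z$ is injective with image $\bigl((N\otimes\Q)+L\bigr)/L$, and similarly $T\otimes\Q/\Z$ maps isomorphically onto $\bigl((T\otimes\Q)+L\bigr)/L$. Regarding $N^*\subseteq N\otimes\Q\subseteq V$ and $T^*\subseteq T\otimes\Q\subseteq V$ via \eqref{eq:characterisingduallattice}, the embeddings $i_{N,L}$ and $i_{T,L}$ of \eqref{eq:discriminantembedding} identify $A_N=N^*/N$ and $A_T=T^*/T$ with the subgroups $(N^*+L)/L$ and $(T^*+L)/L$ of $V/L$. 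Thus the lemma becomes the following equality of subgroups of $V$:
$$\bigl((N\otimes\Q)+L\bigr)\cap\bigl((T\otimes\Q)+L\bigr)=N^*+L=T^*+L .$$

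First I would set down the elementary structure. Since the form on $N$ is non-degenerate and $T=N^{\perp}$, we have $V=(N\otimes\Q)\perp(T\otimes\Q)$; write $p_N,p_T\colon V\to V$ for the two orthogonal projections, so $x=p_N(x)+p_T(x)$ for all $x\in V$. For $\ell\in L$ one has $p_N(\ell)\cdot n=\ell\cdot n\in\Z$ for every $n\in N$, hence $p_N(\ell)\in N^*$, and likewise $p_T(\ell)\in T^*$; therefore $L\subseteq N^*\oplus T^*$ inside $V$. The single place where unimodularity enters is in upgrading this to $p_N(L)=N^*$ (and symmetrically $p_T(L)=T^*$): given $x\in N^*$, the $\Z$-valued functional $n\mapsto x\cdot n$ on $N$ extends to $L$ because $L/N$ is free, so unimodularity provides $\ell\in L$ with $\ell\cdot(-)$ equal to this extension, and then $p_N(\ell)=x$. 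Consequently, for any $x\in N^*$, picking $\ell\in L$ with $p_N(\ell)=x$ gives $x=\ell-p_T(\ell)\in L+T^*$; this yields $N^*+L\subseteq T^*+L$, and by symmetry $N^*+L=T^*+L$.

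It then remains to prove the displayed equality. The inclusion $N^*+L=T^*+L\subseteq\bigl((N\otimes\Q)+L\bigr)\cap\bigl((T\otimes\Q)+L\bigr)$ is immediate from $N^*\subseteq N\otimes\Q$ and $T^*\subseteq T\otimes\Q$. For the reverse inclusion, take $z$ in the intersection and write $z=a+\ell_1=b+\ell_2$ with $a\in N\otimes\Q$, $b\in T\otimes\Q$, and $\ell_1,\ell_2\in L$; then $a-b=\ell_2-\ell_1\in L\subseteq N^*\oplus T^*$, so applying $p_N$ gives $a=p_N(a-b)\in N^*$, whence $z=a+\ell_1\in N^*+L$. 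The lattice-theoretic content is just the one-line use of unimodularity; I expect the only thing needing care to be the bookkeeping in the first paragraph, namely checking injectivity of the maps $N\otimes\Q/\Z,\,T\otimes\Q/\Z\to L\otimes\Q/\Z$ and correctly matching their images with the images of $i_{N,L}$ and $i_{T,L}$.
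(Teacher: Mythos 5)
Your proof is correct and is essentially the same argument as the paper's: the reverse inclusion uses orthogonality of $N\otimes\Q$ and $T\otimes\Q$ to show a common representative lies in $N^*$ (resp.\ $T^*$), and the forward inclusion uses unimodularity of $L$ together with primitivity to realise every element of $N^*$ (resp.\ $T^*$) as a projection of a lattice vector. Your reformulation via the orthogonal projections $p_N,p_T$ and the identity $N^*+L=T^*+L$ in $L\otimes\Q$ is just a cleaner packaging of the same steps, with the bookkeeping of the embeddings $i_{N,L}$, $i_{T,L}$ made explicit.
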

\begin{proof}
    Suppose $x\in (T\otimes \Q/\Z)\cap (N\otimes \Q/\Z)$. Then there exist $\lambda\in T\otimes \Q$ and $v\in N\otimes \Q$ such that $x\equiv \lambda \pmod{L}$ and $x\equiv v\pmod{L}.$ In particular, we have $\lambda-v\equiv 0\pmod{L}.$ For any integral vector $\zeta\in T$, we have $\lambda\cdot \zeta = (\lambda-v)\cdot \zeta \in \Z$, so that $\lambda\in T^\vee$. This means that $x\in i_{T,L}(A_{T})$. By a similar argument, we have $x\in i_{N,L}(A_{N})$. This shows that $(T\otimes \Q/\Z)\cap (N\otimes \Q/\Z)$ is contained in $i_{T,L}(A_{T})$ and $i_{N,L}(A_{N}).$ 
    
    The other two inclusions follow from a standard argument, c.f. \cite[Proposition 6.4]{Muk87}. We include it here for completeness. Let $\lambda\in i_{T,L}(A_{T})$. Since it is clear that $\lambda\in T\otimes \Q/\Z$, we must show that $\lambda\in N\otimes \Q/\Z$. By slight abuse of notation, we also denote by $\lambda$ any lift to $T^\vee$. Since $T$ is a primitive sublattice of $L$ and $L$ is unimodular, there is a surjective map $L\to T^\vee$. That is, there exists a vector $x\in L$ such that $x\cdot\zeta = \lambda\cdot\zeta$ for every $\zeta\in T$. In particular, $x-\lambda$ is orthogonal to $T$, and thus $v\coloneqq x-\lambda\in N\otimes \Q$. Therefore we have $\lambda+v\in L$, hence $\lambda\equiv -v\pmod {L}$, and we obtain $\lambda\in N\otimes \Q/\Z$. The final remaining inclusion is completely analogous.
\end{proof}

\begin{remark}\label{rmk:discriminantorthogonal}
    Lemma \ref{lem:intersectingspecialbrauer} induces an isomorphism of groups $A_T\simeq i_{T,L}(A_T) = i_{N,L}(A_N)\simeq A_N$, which is an isomorphism of quadratic forms $A_{T}(-1)\simeq A_N$ \cite[Proposition 1.6.1]{Nik80}.
\end{remark}

\begin{lemma}\label{lem:nikulinorthogonal} \cite[Proposition 1.6.1]{Nik80}
    Let $L$ be a unimodular lattice, let $N\hookrightarrow L$ be a primitive sublattice and write $T\coloneqq N^\perp \subset L$. Suppose $g\in O(N)$ and $h\in O(T)$. Then $\overline{g}$ and $\overline{h}$ induce the same isometry on $i_{N,L}(A_N) = i_{T,L}(A_T)$ if and only if there is an isometry $f: L\simeq L$ preserving $N$ and $T$, that satisfies $f|_N = g$ and $f|_T = h$.
\end{lemma}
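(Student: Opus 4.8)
The plan is to work inside $L\otimes\Q=(N\otimes\Q)\oplus(T\otimes\Q)$ and to exploit the sandwich $N\oplus T\subseteq L\subseteq N^*\oplus T^*$ together with the description of $L/(N\oplus T)$ as the graph of a ``gluing'' isomorphism $A_N\to A_T$. For the easy implication, suppose $f\in O(L)$ preserves $N$ and $T$ with $f|_N=g$, $f|_T=h$. Then $f\otimes\Q$ preserves $N\otimes\Q$, hence $N^*$, hence $N^*+L$, so $f_{\Q/\Z}$ preserves $i_{N,L}(A_N)=(N^*+L)/L$ and acts there, via the identification $i_{N,L}\colon A_N\simeq(N^*+L)/L$, as $\overline g$; similarly $f_{\Q/\Z}$ acts on $i_{T,L}(A_T)$ as $\overline h$ via $i_{T,L}$. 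By Lemma~\ref{lem:intersectingspecialbrauer} these are one and the same subgroup $H$, so the single automorphism $f_{\Q/\Z}|_H$ is at once the isometry of $H$ induced by $\overline g$ and the one induced by $\overline h$; hence these coincide. (This is the sublattice version of the identity $i_L\circ\overline f=f_{\Q/\Z}|_{i_L(A_L)}$ recorded above, which is itself empty here since $L$ is unimodular.)

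For the converse, assume $\overline g$ and $\overline h$ induce the same isometry of $H$, and set $F\coloneqq(g\oplus h)\otimes\Q\in O(L\otimes\Q)$. Since $g\in O(N)$ and $h\in O(T)$, $F$ preserves $N^*$ and $T^*$, hence $N^*\oplus T^*$; moreover $N\oplus T\subseteq L\subseteq N^*\oplus T^*$ (for the right inclusion: the orthogonal projections $x_N$, $x_T$ of any $x\in L$ onto $N\otimes\Q$, $T\otimes\Q$ pair integrally with $N$, resp.\ $T$, so lie in $N^*$, resp.\ $T^*$). Write $\overline L\coloneqq L/(N\oplus T)\subseteq A_N\oplus A_T$. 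The projection $\overline L\to A_N$, $[x]\mapsto[x_N]$, is bijective: surjectivity follows from surjectivity of the restriction $L\to N^*$ (using $L$ unimodular and $N$ primitive, exactly as in the proof of Lemma~\ref{lem:intersectingspecialbrauer}), and injectivity from $(T\otimes\Q)\cap L=T$. So $\overline L$ is the graph $\{(a,\gamma(a)):a\in A_N\}$ of a group isomorphism $\gamma\colon A_N\to A_T$. As $F$ induces $\overline g\oplus\overline h$ on $A_N\oplus A_T$, it carries this graph onto the graph of $\overline h\,\gamma\,\overline g^{-1}$; therefore $F(L)=L$ if and only if $\overline h\,\gamma=\gamma\,\overline g$.

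It remains to match this with the hypothesis. Unwinding the proof of Lemma~\ref{lem:intersectingspecialbrauer}: for $a\in A_N$ choose $x\in L$ with $x_N$ representing $a$; then in $L\otimes\Q/\Z$ one has $i_{N,L}(a)=[x_N]=-[x_T]=i_{T,L}(-[x_T])$, so the isomorphism $\psi\colon A_N\to A_T$ determined by $i_{T,L}\circ\psi=i_{N,L}$ satisfies $\psi=-\gamma$. Now the hypothesis says exactly that $i_{N,L}(\overline g\,a)=i_{T,L}(\overline h\,\psi\,a)$ for all $a\in A_N$, i.e.\ $\psi\,\overline g=\overline h\,\psi$, equivalently (since $\psi=-\gamma$) $\gamma\,\overline g=\overline h\,\gamma$. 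Hence $F(L)=L$, and $f\coloneqq F|_L\in O(L)$ preserves $N$ and $T$ (since $F(N)=g(N)=N$ and $F(T)=h(T)=T$) with $f|_N=g$, $f|_T=h$, as desired.

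The one point that needs genuine care is the sign bookkeeping in the last step: one must identify $\gamma$ with $-\psi$ so that graph-invariance under $\overline g\oplus\overline h$ translates precisely into the condition that $\overline g$ and $\overline h$ agree on $H$ in the sense of Lemma~\ref{lem:intersectingspecialbrauer}. Everything else — the sandwich $N\oplus T\subseteq L\subseteq N^*\oplus T^*$, the graph structure of $L/(N\oplus T)$, the bijectivity of $\overline L\to A_N$ — is elementary and reuses the surjectivity/injectivity inputs already established for Lemma~\ref{lem:intersectingspecialbrauer}; the argument is a concrete rendering of \cite[Proposition~1.6.1]{Nik80}.
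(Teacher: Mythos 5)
Your proof is correct. Note that the paper does not actually prove Lemma \ref{lem:nikulinorthogonal}; it simply cites \cite[Proposition 1.6.1]{Nik80}, so there is no in-paper argument to compare against. What you have written is a complete, self-contained rendering of Nikulin's gluing argument: the sandwich $N\oplus T\subseteq L\subseteq N^*\oplus T^*$, the identification of $L/(N\oplus T)$ with the graph of an isomorphism $\gamma\colon A_N\to A_T$, and the observation that $(g\oplus h)\otimes\Q$ preserves $L$ exactly when $\overline h\,\gamma=\gamma\,\overline g$. All the supporting facts are justified (surjectivity of $L\to N^*$ from unimodularity of $L$ and freeness of $L/N$; injectivity of $\overline L\to A_N$ from saturatedness of $T$), and the one genuinely delicate point --- that the paper's identification $i_{T,L}^{-1}\circ i_{N,L}$ is $-\gamma$ rather than $\gamma$, because $x_N\equiv -x_T\pmod L$ --- is handled correctly, so the hypothesis ``$\overline g$ and $\overline h$ agree on $i_{N,L}(A_N)=i_{T,L}(A_T)$'' translates precisely (the sign cancels) into the graph-invariance condition. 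The easy direction is also fine, using $N^*\cap L=N$ to identify $A_N$ with $(N^*+L)/L$. The only cosmetic remark is that your local use of $H$ for the common subgroup collides with the polarisation $H$ used elsewhere in the paper; rename it if this text were to be inserted.
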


One lattice which is of particular importance is the so-called \textit{hyperbolic lattice} $U$, which is the rank 2 lattice given by the symmetric bilinear form $$\left(\begin{matrix}
0 & 1\\
1& 0
\end{matrix}\right).$$ This the even unimodular lattice with the lowest rank, which makes it very versatile, see for example the following two lemmas. 

\begin{lemma}\cite[Proposition 1.14.1]{Nik80}\label{lem:extendisometryhyperbolic}
    Let $L$ be a unimodular lattice, and let $N\hookrightarrow L$ be a primitive sublattice. If there exists an embedding $U\subset N^\perp$, then for any $g\in O(N)$, there exists an isometry $f: L\simeq L$ such that $f|_{N} = g$.
\end{lemma}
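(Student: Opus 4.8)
The plan is to deduce the statement from the gluing criterion of Lemma~\ref{lem:nikulinorthogonal}. Write $T\coloneqq N^\perp\subset L$. By hypothesis $U$ embeds into $T$, and since $U$ is unimodular this embedding splits off an orthogonal direct summand, so $T\cong U\oplus W$ for some even lattice $W$; in particular $A_T\cong A_W$. Given $g\in O(N)$, I would look for an isometry $h\in O(T)$ whose induced action on the discriminant group ``matches'' that of $g$ across the glue, and then feed the pair $g,h$ into Lemma~\ref{lem:nikulinorthogonal} to obtain an isometry $f$ of $L$ with $f|_N=g$ (and, as a bonus, $f|_T=h$).

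To set this up, let $\overline g\in O(A_N)$ be the isometry induced by $g$. By Lemma~\ref{lem:intersectingspecialbrauer} together with Remark~\ref{rmk:discriminantorthogonal}, the embeddings $i_{N,L}$ and $i_{T,L}$ identify both $A_N$ and $A_T$ with the same subgroup $i_{N,L}(A_N)=i_{T,L}(A_T)\subseteq L\otimes\Q/\Z$; write $\gamma\coloneqq i_{T,L}^{-1}\circ i_{N,L}\colon A_N\xrightarrow{\sim}A_T$ for the resulting isomorphism, which by Remark~\ref{rmk:discriminantorthogonal} carries $q_N$ to $-q_T$. Then $\psi\coloneqq\gamma\circ\overline g\circ\gamma^{-1}$ lies in $O(A_T)$ (since $O$ of a finite quadratic form coincides with $O$ of its negative), and by construction $\overline g$ transported along $i_{N,L}$ and $\psi$ transported along $i_{T,L}$ induce one and the same automorphism of $i_{N,L}(A_N)=i_{T,L}(A_T)$.

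The remaining point --- and the crux of the argument --- is to realise $\psi$ as $\overline h$ for some $h\in O(T)$, i.e.\ to know that the natural map $O(T)\to O(A_T)$ is surjective. This is exactly where the hypothesis $U\subset T$ enters: writing $T\cong U\oplus W$, every isometry of $A_T\cong A_W$ lifts to $U\oplus W$, the isotropic vectors of the hyperbolic summand providing the room needed to correct glue vectors (see \cite[Corollary 1.5.2]{Nik80}; this can also be checked by hand, lifting a generating set of $O(A_W)$). Granting this, I would pick $h\in O(T)$ with $\overline h=\psi$; then $\overline g$ and $\overline h$ induce the same isometry on $i_{N,L}(A_N)=i_{T,L}(A_T)$, so Lemma~\ref{lem:nikulinorthogonal} produces $f\colon L\xrightarrow{\sim}L$ with $f|_N=g$, as desired. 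The one genuinely nontrivial ingredient is the surjectivity of $O(T)\to O(A_T)$: this fails for general $T$, and the hyperbolic summand is precisely what makes it hold; everything else is bookkeeping with discriminant forms inside $L\otimes\Q/\Z$.
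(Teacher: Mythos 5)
Your argument is correct and follows the paper's proof essentially verbatim: split $T=N^\perp\cong U\oplus W$ so that $A_T\cong A_W$, use surjectivity of $O(T)\to O(A_T)$ to pick $h\in O(T)$ inducing the same automorphism of the glue group $i_{N,L}(A_N)=i_{T,L}(A_T)$ as $g$, and conclude via Lemma~\ref{lem:nikulinorthogonal}. The only discrepancy is the reference for the surjectivity of $O(T)\to O(A_T)$: the paper obtains it from the inequality $\rk(T)=\rk(W)+2\geq \ell(A_W)+2$ together with \cite[Theorem 1.14.2]{Nik80} (your pointer to Corollary 1.5.2 is not the standard source for this fact, though the claim itself is correct).
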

\begin{proof}
    Write $T = N^\perp$. By assumption, there is an orthogonal decomposition $T = U\oplus T'$ for some lattice $T'$. We have $A_T \simeq A_U\oplus A_{T'}\simeq A_{T'},$ since $U$ is unimodular. In particular, $\rk(T) = \rk(T')+2 \geq \ell(A_{T'})+2.$ Combined with \cite[Theorem 1.14.2]{Nik80}, this implies that the homomorphism $O(T)\to O(A_T)$ is surjective. Choose $h\in O(T)$ such that $\overline{h}$ induces the same isomorphism on $i_{T,L}(A_T)$ as $g$, then by Lemma \ref{lem:nikulinorthogonal} there exists an isometry $L\simeq L$ which restricts to $g$ on $N$.
\end{proof}

\begin{lemma}\label{lem:eichlertransvections}\cite[Proposition 3.3]{GHS07}
    Let $L = U\oplus U\oplus L'$ be a lattice containing two copies of the hyperbolic plane. Let $v,u\in L$ be two primitive vectors such that 
    \begin{enumerate}
        \item[i)] $\frac{v}{\dv(v)} = \frac{u}{\dv(u)}\in A_L$,
        \item[ii)] $u^2 = v^2.$
    \end{enumerate}
    Then there is an isometry $f: L\simeq L$ such that $\overline{f} = \operatorname{id}_{A_L}$ and such that $f(v) = u.$
\end{lemma}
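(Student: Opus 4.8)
The plan is to prove this in the classical way, via \emph{Eichler transvections}. For an isotropic vector $e\in L$ and a vector $a\in e^{\perp}$ one sets
\[
t_{e,a}(x)\;=\;x-(a\cdot x)\,e+(e\cdot x)\,a-\tfrac{1}{2}(a\cdot a)(e\cdot x)\,e,\qquad x\in L.
\]
A short computation (which I would include) shows that each $t_{e,a}$ is an isometry of $L$, that $t_{e,a}\circ t_{e,b}=t_{e,a+b}$, and that $t_{e,a}$ acts trivially on the discriminant group: the formula makes sense verbatim on $L\otimes\Q$, and for $x\in L^{*}$ one has $a\cdot x,\,e\cdot x\in\Z$ because $a,e\in L$, so $t_{e,a}(x)-x\in\Z e+\Z a\subseteq L$. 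Hence every element of the subgroup $E(L)\subseteq O(L)$ generated by the $t_{e,a}$ lies in the kernel of $O(L)\to O(A_{L})$, and it suffices to prove that $v$ and $u$ lie in the same $E(L)$-orbit.

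For this I would establish Eichler's criterion: on the set of primitive vectors of $L=U\oplus U\oplus L'$, the pair $\bigl(v^{2},\,\bar v\bigr)$, where $\bar v\coloneqq v/\dv(v)\in A_{L}$, is a complete invariant of the $E(L)$-action. First, since $v$ is primitive the element $\bar v$ has order exactly $\dv(v)$ in $A_{L}$; hence hypothesis (i) already forces $\dv(v)=\dv(u)=:d$, and moreover $v\equiv u\pmod{dL}$. Next, fixing a decomposition $L=U_{1}\oplus U_{2}\oplus L_{0}$ with $U_{i}=\langle e_{i},f_{i}\rangle$, I would reduce an arbitrary primitive $v$ to a normal form supported on $U_{1}$ together with a single auxiliary vector: using transvections $t_{e_{2},a},t_{f_{2},a}$ and $t_{e_{1},a},t_{f_{1},a}$ with $a$ ranging over suitable sublattices, one successively clears the $U_{2}\oplus L_{0}$-component down to $d$ times a fixed vector and then normalises the $U_{1}$-component, the residual freedom being pinned down precisely by $v^{2}$ and by the class $\bar v$. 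Applying this to $v$ and to $u$ and using that they carry the same invariants produces the required $f\in E(L)$ with $f(v)=u$; that $\overline{f}=\operatorname{id}_{A_{L}}$ is then automatic by the first paragraph.

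The main obstacle is the normal-form reduction itself, and especially the case $d=\dv(v)>1$: the available transvections only move $v$ by vectors lying in proper sublattices such as $e_{i}^{\perp}$, so one must check carefully that they still reach the normal form, and that this normal form depends only on the class $\bar v\in A_{L}$ and not on the chosen lift to $L^{*}$. This is exactly the point at which both copies of the hyperbolic plane are used — with a single copy the statement is false. Since this is a classical fact, an alternative is simply to cite it (\cite[Proposition 3.3]{GHS07}, ultimately due to Eichler), once one has recorded the elementary observation above that hypothesis (i) is equivalent to $v$ and $u$ having the same divisibility and the same image in $A_{L}$.
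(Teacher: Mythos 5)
The paper offers no proof of this lemma at all --- it is stated purely as a citation of \cite[Proposition 3.3]{GHS07} (Eichler's criterion). Your proposal is correct and consistent with that: the transvection formula, the verification that each $t_{e,a}$ is an isometry acting trivially on $A_L$, and the observation that hypothesis (i) forces $\dv(v)=\dv(u)=d$ and $v\equiv u\pmod{dL}$ are all accurate, and you rightly identify that the remaining normal-form reduction (the only nontrivial step, and the one genuinely requiring both hyperbolic planes) is exactly the classical content of the cited result, which may simply be invoked as the paper does.
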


If $N\hookrightarrow L$ is an embedding of lattices of finite index, then we say that $L$ is an overlattice of $N$. In this case, there is a sequence of embeddings $N\hookrightarrow L\hookrightarrow L^*\hookrightarrow N^*$, which exhibits $L/N$ as a subgroup of $A_N$, which is isotropic for the quadratic form on $A_N$.

\subsection{Lattices of hyperk\"ahler manifolds}
If $S$ is a complex projective K3 surface, the cup product turns $H^2(S,\Z)$ into an even, unimodular lattice isometric to $\Lambda_{\text{K3}} = U^{\oplus 3}\oplus E_{8}(-1)^{\oplus2}$, where $U$ is the hyperbolic lattice, and $E_8$ is the unique even, unimodular, positive-definite lattice of rank 8, see for example \cite[\S VIII.1]{BPV12} for more details.

The lattice $H^2(S,\Z)$ carries a natural Hodge structure, which is of K3 type, meaning that it is of weight 2 and $H^{2,0}(S) \simeq \CC\sigma$. We call an integral Hodge structure which admits a lattice structure a \textit{Hodge lattice}.

The integral $(1,1)$-part of $H^2(S,\Z)$ is called the N\'eron--Severi lattice $\NS(S)\coloneqq H^{1,1}(S,\Z)$.
The orthogonal complement $T(S) = \operatorname{NS}(S)^\perp \subseteq H^2(S,\Z)$ is called the transcendental lattice. Equivalently, $T(S)$ is the smallest primitive sublattice of $H^2(S,\Z)$ such that $T(S)_\CC$ contains $H^{2,0}(S)$, see for example \cite[\S3.2.2]{Huy16}.

The full integral cohomology group $H^*(S,\Z) = H^0(S,\Z)\oplus H^2(S,\Z)\oplus H^4(S,\Z)$, admits a lattice structure given by $$(r,l,s)\cdot(r',l',s') = l\cdot l'-rs'-sr'.$$ It also inherits a Hodge from $H^2(S,\Z)$ and we call the resulting Hodge lattice the \textit{Mukai lattice}, and denote it by $\tilde{H}(S,\Z)$. We define the \textit{extended N\'eron-Severi lattice} of $S$ as the $(1,1)$-part of $\tilde{H}(S,\Z)$: $$\N(S)\coloneqq \tilde{H}^{1,1}(S,\Z) = H^0(S,\Z)\oplus \NS(S)\oplus H^4(S,\Z).$$ As a lattice, we have  $\N(S)\simeq \NS(S)\oplus U.$

\begin{definition}\cite{MS24}\label{def:caldararuclass}
For a vector $v\in \N(S)$, we call the element $a_v\coloneqq \frac{v}{\dv(v)}\in A_{\N(S)}$ the \textit{C\u ald\u araru class of $v$.} The element $\omega_v\in A_{T(S)}$ corresponding to $a_v$ via the natural isomorphism $A_{T(S)}(-1)\simeq A_{\N(S)}$ is called the \textit{transcendental C\u ald\u araru class of $v$.}
\end{definition}

If $X$ is a complex projective hyperk\"ahler manifold, the second cohomology group $H^2(X,\Z)$ also carries a lattice structure, given by the \textit{Beauville--Bogomolov--Fujiki} (\textit{BBF}) form $q$ \cite{Bea83,Bog96,Fuj87}. It is important to note that, unlike in the case for K3 surfaces, $H^2(X,\Z)$ is not unimodular if $\operatorname{dim}(X)>2$ in all the currently known deformation types \cite{Rap08}. 

Similarly to the case of K3 surfaces, we write $\NS(X)\subset H^2(X,\Z)$ for $H^{1,1}(X,\Z)$, and $T(X)\coloneqq \NS(X)^\perp$.

We are mostly interested in hyperk\"ahler manifolds of \Kn-type, that is, hyperk\"ahler manifolds that are deformation equivalent to the Hilbert scheme of $n$ points on a K3 surface. Assuming $n\geq 2$, we have $$H^2(X,\Z)\simeq U^{\oplus 3}\oplus E_8(-1)^{\oplus 2}\oplus \langle-2n-2\rangle.$$

\subsection{Brauer groups of hyperk\"ahler manifolds}
For basic facts about Brauer groups, we refer to \cite{CS21}.
\begin{definition}
    An Azumaya algebra $\calA$ on a scheme $X$ is a sheaf of $\bigo_X$-algebras, which is \'etale locally isomorphic to the sheaf of matrix algebras $\operatorname{M}_{n\times n}(\bigo_X)$. 
\end{definition}

We say that two Azumaya algebras $\calA$, $\calB$ are \textit{$\Br$-equivalent} if there exist locally free sheaves $\calE$, $\calF$ on $X$ such that 
\begin{equation}\label{eq:brequiv}\calA\otimes \SEnd(\calE)\simeq \calB\otimes \SEnd(\calF).
\end{equation}

The set of isomorphism classes of Azumaya algebras carries a natural group structure given by the tensor product, and $\Br$-equivalence is compatible with this group structure \cite[Theorem 5.1]{Gro68}.

\begin{definition}
    The Brauer group $\Br(X)$ of $X$ is the group of $\Br$-equivalence classes of Azumaya algebras of $X$.
\end{definition}

Now, let $X$ be a complex smooth projective variety.

\begin{definition}
    The \textit{cohomological Brauer Group} of $X$ is the group $H^2_{\et}(X,\mathbb{G}_m)$, or, equivalently, $H^2(X,\bigo_X^\times)_{\text{tors}}.$ 
\end{definition}
The fact that $H^2_{\et}(X,\bbG_m)$ and $H^2(X,\bigo_X^\times)_{\text{tors}}$ are isomorphic is a consequence of the fact that $H^2(X,\mathbb{G}_m)$ is torsion, see \cite[Remark 18.1.4ii)]{Huy16}, combined with the Kummer sequence, see \cite[Remark 11.5.13]{Huy16}.

It is a result by De Jong and Gabber that the Brauer group is naturally isomorphic to the cohomological Brauer group, see \cite[\S4.2]{CS21}.

Under the additional assumption that $H^3(X,\Z)=0$, it follows from the exponential sequence that there is an isomorphism 

\begin{equation}\label{eq:brauergroup}
    \Br(X)\simeq \frac{H^2(X,\Z)}{\NS(X)}\otimes \Q/\Z.
\end{equation}
For this reason, as in \cite{HM23}, we write 
\begin{equation}\label{eq:Tprime}
    T'(X) \coloneqq \frac{H^2(X,\Z)}{\NS(X)}.
\end{equation}

We note that the additional assumption that $H^3(X,\Z) = 0$ is satisfied for hyperk\"ahler manifolds of \Kn-type. This follows from \cite[Equation (2.1)]{Got02} and \cite[Theorem 1]{Mar07}.

There is a well-defined injective group homomorphism $T'(X)\to T(X)^*$ given by 
\begin{equation}\label{eq:morphismtranscendentals}[\xi]\mapsto (\xi\cdot-)|_{T(X)}.
\end{equation}
Since $T(X)\subset H^2(X,\Z)$ is a primitive sublattice, the restriction map $H^2(X,\Z)^*\to T(X)^*$ is surjective. This means that if $H^2(X,\Z)$ is unimodular, i.e. if $H^2(X,\Z)\simeq H^2(X,\Z)^*$, then for any element $f\in T(X)^*$, there exists a vector $\xi\in H^2(X,\Z)$ such that $f(t) = \xi\cdot t$ for all $t\in T(X)$. In this case, the morphism \eqref{eq:morphismtranscendentals} is an isomorphism.
Recall that $H^2(S,\Z)$ is unimodular if $S$ is a K3 surface, hence the above discussion proves the following well-known lemma.
\begin{lemma}\label{lem:brauerofak3}
    Let $S$ be a K3 surface. Then there is an isomorphism $$\Br(S)\simeq T(S)^*\otimes \Q/\Z\simeq \operatorname{Hom}(T(S),\Q/\Z).$$ 
\end{lemma}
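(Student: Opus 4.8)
The plan is simply to assemble the ingredients prepared in the paragraphs immediately preceding the statement, so the ``proof'' amounts to bookkeeping. First I would invoke \eqref{eq:brauergroup}: for a K3 surface $S$ one has $H^3(S,\Z)=0$, so the exponential sequence gives $\Br(S)\simeq T'(S)\otimes\Q/\Z$, where $T'(S)=H^2(S,\Z)/\NS(S)$ as in \eqref{eq:Tprime}.

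Next I would upgrade the injection \eqref{eq:morphismtranscendentals}, $T'(S)\to T(S)^*$, $[\xi]\mapsto(\xi\cdot-)|_{T(S)}$, to an isomorphism. Injectivity is the general fact recorded after \eqref{eq:morphismtranscendentals}. For surjectivity, note that $T(S)\hookrightarrow H^2(S,\Z)$ is a primitive sublattice, so the restriction map $H^2(S,\Z)^*\to T(S)^*$ is surjective; since $H^2(S,\Z)$ is unimodular, the natural map $H^2(S,\Z)\to H^2(S,\Z)^*$ is itself an isomorphism, and composing shows every functional on $T(S)$ is of the form $(\xi\cdot-)|_{T(S)}$ for some $\xi\in H^2(S,\Z)$. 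Tensoring the resulting isomorphism $T'(S)\simeq T(S)^*$ with $\Q/\Z$ yields $\Br(S)\simeq T(S)^*\otimes\Q/\Z$.

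Finally, for the identification $T(S)^*\otimes\Q/\Z\simeq\operatorname{Hom}(T(S),\Q/\Z)$, I would apply $\operatorname{Hom}(T(S),-)$ to the short exact sequence $0\to\Z\to\Q\to\Q/\Z\to 0$. As $T(S)$ is free of finite rank, $\operatorname{Ext}^1(T(S),\Z)=0$ and $\operatorname{Hom}(T(S),\Q)\simeq T(S)^*\otimes\Q$, so one obtains $0\to T(S)^*\to T(S)^*\otimes\Q\to\operatorname{Hom}(T(S),\Q/\Z)\to 0$, identifying $\operatorname{Hom}(T(S),\Q/\Z)$ with $(T(S)^*\otimes\Q)/T(S)^*=T(S)^*\otimes\Q/\Z$. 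Chaining the three isomorphisms gives the claim.

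There is no real obstacle here: the statement is essentially a formal consequence of the preceding discussion. The only step deserving a word of care is the surjectivity of \eqref{eq:morphismtranscendentals}, which is precisely where unimodularity of $H^2(S,\Z)$ is used and which fails for higher-dimensional hyperk\"ahler manifolds --- this being exactly the point the rest of the paper must work around.
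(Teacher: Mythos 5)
Your proof is correct and follows essentially the same route as the paper: the paper derives the lemma directly from the preceding discussion, namely $\Br(S)\simeq T'(S)\otimes\Q/\Z$ via the exponential sequence and $H^3(S,\Z)=0$, together with the identification $T'(S)\simeq T(S)^*$ coming from unimodularity of $H^2(S,\Z)$ and primitivity of $T(S)$. Your final step identifying $T(S)^*\otimes\Q/\Z$ with $\operatorname{Hom}(T(S),\Q/\Z)$ via the sequence $0\to\Z\to\Q\to\Q/\Z\to 0$ is the standard justification the paper leaves implicit.
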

To summarise, for a K3 surface $S$, there are natural isomorphisms 
\begin{equation}\label{eq:allthebrauergroups}
\Br(S)\simeq H^2_{\text{\'et}}(S,\mathbb{G}_m)\simeq H^2(S,\bigo_S^\times)_{\text{tors}}\simeq T'(S)\otimes\Q/\Z\simeq\operatorname{Hom}(T(S),\Q/\Z),
\end{equation}
and we refer to each of these groups as the Brauer group of $S$.

On the other hand, if $X$ is a hyperk\"ahler manifold of \Kn-type, for some $n\geq 2$, then $H^2(X,\Z)$ is never unimodular, and generally we have $T'(X)\not\simeq T(X)^*$, and hence the Brauer group of $X$, which is isomorphic to $T'(X)\otimes \Q/\Z$ by \eqref{eq:brauergroup}, is generally not isomorphic to $\Hom(T(X),\Q/\Z)$.

\section{Moduli spaces}
\subsection{Moduli spaces of sheaves on K3 surfaces}
In the following, we denote $v(E)=\ch(E)\sqrt{\td_X}$ for a sheaf $E\in\Coh(X)$ on a projective variety $X$. The following theorem, based on the pioneering work of Mukai \cite{Muk87}, is well known and due to many authors \cite{GH96,Huy06,OGr97}. A more general version can be found in \cite{BM14i, BM14ii}.

\begin{theorem}\label{thm: moduli sheaves K3}
    Let $S$ be a K3 surface and $v\in \widetilde{H}(S,\matZ)$ a primitive Mukai vector. For $H$ a generic polarisation, there exists a (possibly empty) coarse moduli space $M_{H}(v)$ of $H$-Gieseker stable coherent sheaves $E$ with $v(E)=v$. Moreover:
    \begin{enumerate}
        \item $M_H(v)$ is empty if $v^2<-2$,
        \item if $v=(r,D,s)$ satisfies $v^2\geq -2$ and either
        \begin{enumerate}
            \item $r>0$,
            \item $r=0$, $D$ effective,
            \item $r=D=0$, $s>0$,
        \end{enumerate}
        then $M_H(v)$ is a projective hyperk\"ahler manifold of $K3^{[n]}$-type, where $2n= v^2+2$.
    \end{enumerate}
\end{theorem}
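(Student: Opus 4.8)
The plan is to realise $M_H(v)$ as a GIT quotient, to establish its smoothness, dimension, and holomorphic symplectic structure via the deformation theory of stable sheaves on a K3 surface, to deduce projectivity from Langton's semistable reduction theorem, and finally to invoke the global deformation-theoretic work of Mukai \cite{Muk87}, O'Grady \cite{OGr97}, and the other authors cited above (and, in the Bridgeland-stability formulation, Bayer--Macr\`i \cite{BM14i,BM14ii}) to pin down the deformation type.

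\textbf{Construction.} Fix a generic, i.e.\ $v$-generic, polarisation $H$. Boundedness of the family of $H$-semistable sheaves with fixed Mukai vector $v$ shows that for $m\gg 0$ every such sheaf $E$ is $m$-regular, hence a quotient of $\bigo_S(-m)^{\oplus N}$ with $N=\chi(E(m))$; this exhibits the semistable locus as a locally closed subscheme of a suitable Quot scheme $\mathcal{Q}$, and $M_H(v)$ is the GIT quotient of that subscheme by the natural $\PGL_N$-action with respect to an appropriate linearisation. Since $H$ is $v$-generic, semistability coincides with stability, the quotient is geometric, and $M_H(v)$ is a coarse moduli space whose closed points are the isomorphism classes of $H$-stable sheaves $E$ with $v(E)=v$. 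In the rank-zero cases (b) and (c) one instead studies $M_H(v)$ through the support morphism to $|D|$, whose fibres are compactified Jacobians of curves in $|D|$.

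\textbf{Smoothness, symplectic form, projectivity.} An $H$-stable sheaf $E$ is simple, so $\Hom(E,E)=\CC$; Serre duality on $S$ gives $\Ext^2(E,E)\cong\Hom(E,E)^\vee=\CC$, and the trace splits this off, so the traceless obstruction space $\Ext^2(E,E)_0$ vanishes. Hence $M_H(v)$ is smooth at $[E]$ of dimension $\ext^1(E,E)=2-\chi(E,E)=v^2+2$, using the Mukai pairing identity $\chi(E,E)=-v^2$. The Yoneda product followed by the trace, $\Ext^1(E,E)\times\Ext^1(E,E)\to\Ext^2(E,E)\to\CC$, is alternating and, by Serre duality, perfect; Mukai's argument then glues these forms into a holomorphic symplectic $2$-form on $M_H(v)$, so $v^2+2$ is even, say $v^2+2=2n$. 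Properness of the moduli functor follows from Langton's semistable reduction theorem, so $M_H(v)$ is projective. Since it is smooth of dimension $v^2+2$, it is empty when $v^2<-2$, which gives (1).

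\textbf{Non-emptiness, deformation type, and the main obstacle.} For (2) it remains to prove that $M_H(v)$ is non-empty and hyperk\"ahler of \Kn-type. Non-emptiness is checked by exhibiting one stable sheaf in a convenient case---most transparently $v=(1,0,1-n)$, where $M_H(v)\cong S^{[n]}$---and then propagating along deformations of the Mukai vector; for the deformation type one deforms the triple $(S,v,H)$ in a family whose fibrewise moduli spaces assemble into a smooth proper family joining $M_H(v)$ to $(S')^{[n]}$ for a suitable K3 surface $S'$, and being hyperk\"ahler of a fixed deformation type is deformation invariant. This last step is the main obstacle: the construction and local analysis are essentially Mukai's and properness is Langton's, but non-emptiness in all the listed positive cases together with the global deformation argument are the genuinely deep inputs, due to O'Grady \cite{OGr97}, Yoshioka, and the other authors cited above (and to Bayer--Macr\`i \cite{BM14i,BM14ii} in the Bridgeland-stability setting). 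In a paper of the present kind one cites these results rather than reproducing them.
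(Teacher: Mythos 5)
The paper offers no proof of this theorem: it is stated as a known result and attributed wholesale to Mukai, G\"ottsche--Huybrechts, O'Grady, Yoshioka, and Bayer--Macr\`i. Your outline is a correct sketch of the standard argument (GIT construction, vanishing of the traceless obstruction space $\Ext^2(E,E)_0$ via simplicity and Serre duality, Mukai's symplectic form, Langton's theorem for properness, emptiness for $v^2<-2$ from the dimension count), and you correctly isolate non-emptiness and the determination of the deformation type as the deep inputs that must be cited rather than reproved, which is exactly what the paper does.
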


Given $v$ as in Theorem \ref{thm: moduli sheaves K3}${\rm (2)}$, a polarisation $H$ such that $M_H(v)$ is smooth is called $v$-generic. When no confusion is possible, we write $M(v)$ instead of $M_H(v)$. It turns out that two choices $H,H'$ of $v$-generic polarisation give rise to two birationally equivalent moduli spaces $M_H(v)$ and $M_{H'}(v)$ \cite[Theorem 1.1]{BM14ii}.

Let $S$ be a K3 surface and fix a primitive Mukai vector $v\in \N(S)$. For a $v$-generic polarisation $H$, we write $M = M_H(v)$. Recall that $M$ is a priori only a \textit{coarse} moduli space, meaning that there is not necessarily a universal sheaf on $S\times M$. However, twisted universal sheaves and quasi-universal sheaves always exist, and we recall some basic facts about them now. Our basic reference for twisted sheaves is \cite{Cal00}.

\begin{definition}
    Let $\alpha_M\in \Br(M)$ be some Brauer class. A $(1\boxtimes\alpha_M)$-twisted universal sheaf is a $(1\boxtimes \alpha_M)$-twisted sheaf $\calU$ on $S\times M$ such that $\calU|_{S\times [\calE]}\simeq \calE$ for all $[\calE]\in M$. If $\alpha_M$ is trivial, we simply call $\calU$ a universal sheaf on $S\times M$. If a $(1\boxtimes\alpha_M)$-twisted universal sheaf exists, we call $\alpha_M$ the obstruction to the existence of a universal sheaf.
\end{definition}

We will see in Definition/Proposition \ref{defprop:universaltwistedsheaves} that, unlike universal sheaves, \textit{twisted} universal sheaves always exist. Moreover, the Brauer class $\alpha_M$ is unique, and therefore we call it the \textit{obstruction class}. We state this result in the relative setting, as we will need to be able to deal with families of moduli spaces in the next sections.

\begin{proposition}\cite{HL10}
\label{prop:relativetwistedmodulispaces}
    Let $f\colon  \calS\to T$ be a projective morphism of schemes of finite type over $\CC$ with connected fibres. Let $\bigo_{\calS}(1)$ be a relatively ample line bundle on $\calS$.  Then for any polynomial $P$ there exists a coarse relative moduli space $\calM\coloneqq \calM_{\calS/T}(P)\to T$ for the functor $$\catname{M}\colon  (\catname{Sch/T})^\circ\to \catname{Sets}$$
    which associates to a $T$-scheme $X\to T$ the set of isomorphism classes of $T$-flat families of stable sheaves on the fibres of $\calS\times_TX\to X$ with Hilbert polynomial $P$. In particular, for any $t\in T$, we have $$\calM_t\simeq M_{\calS_t}(P).$$ 
\end{proposition}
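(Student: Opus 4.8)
The plan is to recognise this as the relative version of the classical GIT construction of moduli of Gieseker-stable sheaves --- due to Gieseker, Maruyama and Simpson, and carried out in families in \cite[\S4.3]{HL10} --- so that the statement is really a reference; accordingly I would recall only the main steps. The first step is boundedness: since $T$ is of finite type over $\CC$ and $P$ is fixed, the family of stable sheaves with Hilbert polynomial $P$ on the fibres of $f$ is bounded, so there exists $m\gg 0$ such that every such sheaf $F$ on every fibre $\calS_t$ is $m$-regular; in particular $F(m)$ is globally generated with no higher cohomology and $h^0(\calS_t,F(m)) = P(m) =: N$.

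Next I would form the relative Quot scheme $\calQ \coloneqq \Quot_{\calS/T}\bigl(\bigo_\calS(-m)^{\oplus N}, P\bigr)$, which is projective over $T$ by Grothendieck, and cut out the open subscheme $R\subseteq\calQ$ of quotients $q\colon \bigo_{\calS_t}(-m)^{\oplus N}\twoheadrightarrow F$ for which $F$ is stable and $H^0(q(m))$ is an isomorphism $\CC^N\xrightarrow{\sim} H^0(\calS_t,F(m))$; both are open conditions given the flatness and $m$-regularity already arranged. The group $\PGL_N$ acts on $R$ over $T$ by reparametrising $\CC^N$, and by construction two points of $R$ have isomorphic underlying sheaf precisely when they lie in the same orbit, so the moduli space should be the quotient $R\git\PGL_N$.

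I would then linearise the induced $\SL_N$-action via the Grothendieck embedding of $\calQ$ into a relative Grassmannian over $T$, form the GIT quotient $\calM \coloneqq R\git\PGL_N$ --- a projective $T$-scheme --- and check that $\calM$ corepresents $\catname{M}$: for a $T$-flat family $\calF$ of stable sheaves on $\calS\times_TX\to X$ with Hilbert polynomial $P$, the pushforward of $\calF(m)$ to $X$ is locally free of rank $N$ by cohomology and base change, so over a trivialising cover of $X$ one obtains a $\PGL_N$-equivariant map to $R$ and hence a canonical morphism $X\to\calM$ independent of the trivialisation. Since every step commutes with base change, restricting to $t\in T$ recovers Gieseker's construction of $M_{\calS_t}(P)$, giving $\calM_t\simeq M_{\calS_t}(P)$. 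The main obstacle is the identification of GIT-(semi)stability of a point of $\calQ$ with Gieseker-(semi)stability of the corresponding sheaf --- the technical heart of the construction, carried out in \cite[\S4.3--4.4]{HL10} --- but since this is entirely contained in \cite{HL10}, in the present paper the proposition is used as a black box and its proof is a reference.
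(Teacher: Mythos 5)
Your reading is correct: the paper gives no proof of this proposition and simply cites \cite{HL10}, exactly as you conclude, and your sketch (boundedness, relative Quot scheme, the open locus of stable quotients with $H^0$ an isomorphism, the $\PGL_N$-action and GIT quotient, corepresentability and compatibility with base change to fibres) is a faithful outline of the construction in \cite[\S 4.3]{HL10}. Nothing further is needed.
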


\begin{defprop}\cite[Proposition 3.3.2]{Cal00}
\label{defprop:universaltwistedsheaves}
    In the notation of Proposition \ref{prop:relativetwistedmodulispaces}, there exists a unique Brauer class $\alpha_\calM\in \Br(\calM)$ such that there is a $(1\boxtimes\alpha_\calM)$-twisted universal sheaf on $\calS\times_T\calM$. This Brauer class is called the obstruction to the existence of a universal sheaf on $\calS\times_T\calM$. For any point $t\in T$, this twisted universal sheaf restricts to a $(1\boxtimes \alpha_t)$-twisted universal sheaf on $\calS_t\times\calM_t$, hence $\alpha_t$ is the obstruction to the existence of a universal sheaf on $\calS_t\times\calM_t$.
\end{defprop}

The main goal of this paper is to compute the obstruction Brauer class of a moduli space of sheaves on a K3 surface. 
The first result in this direction is the following.

\begin{proposition}\cite[Theorem 4.6.5]{HL10} \label{prop:fineifdivone}
    Let $S$ be a K3 surface with primitive Mukai vector $v\in \N(S)$ and $v$-generic polarisation $H$. If $\dv(v) = 1$, then $M_H(v)$ is a fine moduli space.
\end{proposition}
Our main theorem implies the converse of Proposition \ref{prop:fineifdivone}, c.f. Corollary \ref{cor:fineiffdivone}.

Now, we fix $S,v,H$ such that $M\coloneqq M_H(v)$ is non-empty and smooth. Moreover, we assume $v^2\geq 2$ so that the dimension of $M$ is at least 4. We denote by $p$, resp. $q$ the projection from $S\times M$ to $M$, resp. to $S$. Let $\alpha_M$ be the obstruction to the existence of a universal sheaf on $S\times M$, and let  $\calE$ be a $(1\boxtimes \alpha_M)$-twisted universal sheaf. There exists an $\alpha_M^{-1}$-twisted locally free sheaf $\calF$ of finite rank $\rho$ on $M$ by \cite[Theorem 1.3.5]{Cal00}. Now $\calU\coloneqq \calE\otimes p^*\calF$ is a \textit{quasi-universal sheaf of similitude $\rho$} on $S\times M$. This means that $\calE\otimes p^*\calF$ is an $M$-flat untwisted sheaf on $S\times M$ with the property that 
for any $[F]\in M$, we have $\calU|_{S\times[F]}\simeq F^{\oplus \rho}$.  We consider the Fourier--Mukai transform
$$\fonction{\Phi^{\calU^\vee}}{\Db(S)}{\Db(M)}{F}{Rp_*(q^*F\otimes \calU^\vee)}.$$
This Fourier--Mukai transform depends on the choices of $\calE$ and $\calF$. However, if $\calU$ and $\calV$ are two quasi-universal sheaves on $S\times M$, then there exist vector bundles $E$ and $F$ on $M$ such that $\calU\otimes p^*E\simeq \calV\otimes p^*F$ by \cite[Appendix 2]{Muk87}. 

\begin{definition}\label{def:mukaimorphism}
    Let $S$ be a K3 surface with primitive Mukai vector $v\in \N(S)$ for which $v^2\geq 2$. Let $H$ be a $v$-generic polarisation, and write $M\coloneqq M_H(v)$. Let $\calU$ be a quasi-universal sheaf on $S\times M$ of similitude $\rho$. The normalised cohomological Fourier--Mukai transform
    $$\fonction{\varphi\coloneqq \frac{1}{\rho}\varphi^{v(\calU^\vee)}}{H^*(S,\matQ)}{H^*(M,\matQ)}{x}{\frac{1}{\rho}p_*\left(q^*x\otimes v(\calU^\vee)\right),}$$
    is called the \textit{Mukai morphism}.
\end{definition}

Note that the Mukai morphism is also dependent on the choice of quasi-universal sheaf. More precisely, let $F$ be a vector bundle on $M$ and write $\calU' = \calU\otimes p^*F$. Let $\varphi'$ be the Mukai morphism corresponding to $\calU'$, then $$\varphi'(x) = \frac{\ch(F)}{\rk(F)}\varphi(x).$$ In particular, if we only consider the degree 2 part, we find $$[\varphi'(x)]_2 = [\varphi(x)]_2+\frac{c_1(F)}{\rk(F)}[\varphi(x)]_0.$$ 

\begin{lemma} \cite[Proof of Theorem 1.5]{Muk87}, \cite{OGr97} \label{lem:mukaipairingmoduli}
    For any $x\in H^*(S,\Q)$, we have $$[\varphi(x)]_0 = -x\cdot v.$$ In particular, whenever $x\in v^\perp\subset H^*(S,\Q)$, $[\varphi(x)]_2$ is independent of the choice of quasi-universal sheaf.
\end{lemma}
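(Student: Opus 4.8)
The plan is to compute the degree-zero component $[\varphi(x)]_0\in H^0(M,\Q)=\Q$ by restricting the class $\varphi(x)$ to an arbitrary closed point $[F]\in M$ and using base change along the fibre of $p\colon S\times M\to M$. Write $\iota\colon\{[F]\}\hookrightarrow M$ and $\tilde\iota\colon S\times\{[F]\}\hookrightarrow S\times M$ for the inclusions and $\pi\colon S\times\{[F]\}\to\{[F]\}$ for the projection; these fit into a Cartesian square over $\iota$, so the Gysin base-change identity $\iota^*p_*=\pi_*\tilde\iota^*$ holds. Applying it to $\varphi(x)=\tfrac1\rho\,p_*\!\left(q^*x\cup v(\calU^\vee)\right)$, with $q\colon S\times M\to S$ the other projection, gives $\iota^*\varphi(x)=\tfrac1\rho\,\pi_*\!\left(\tilde\iota^*q^*x\cup\tilde\iota^*v(\calU^\vee)\right)$.

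I would then evaluate the two pullbacks. Since $q\circ\tilde\iota$ is the identity of $S$, we have $\tilde\iota^*q^*x=x$. For the second factor, the key point is that a quasi-universal sheaf $\calU$ is $M$-flat with $\calU|_{S\times\{[F]\}}\simeq F^{\oplus\rho}$, so its derived restriction coincides with the naive one; combined with the facts that $\ch$ and the duality commute with pullback and that $\tilde\iota^*\td_{S\times M}=\td_S$, this gives $\tilde\iota^*v(\calU^\vee)=\rho\,v(F^\vee)$. Hence $\iota^*\varphi(x)=\pi_*\!\left(x\cup v(F^\vee)\right)=\int_S x\cup v(F^\vee)$.

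Next I would rewrite this integral in terms of the Mukai pairing. From the explicit formula $(r,l,s)\cdot(r',l',s')=l\cdot l'-rs'-sr'$ one checks that $\int_S a\cup b=-(a^\vee\cdot b)$ for all $a,b\in H^*(S,\Q)$, where $(-)^\vee$ negates the $H^2$-component, and that the Mukai pairing is invariant under applying $(-)^\vee$ to both arguments. Since $v(F)=v$ and a short computation with $\sqrt{\td_S}=(1,0,1)$ shows $v(F^\vee)=v^\vee$, we obtain $\int_S x\cup v(F^\vee)=-(x^\vee\cdot v^\vee)=-(x\cdot v)$. As this holds for every point $[F]\in M$ and $M$ is connected, while $[\varphi(x)]_0$ is precisely the scalar $\iota^*\varphi(x)$, we conclude $[\varphi(x)]_0=-x\cdot v$.

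Finally, the ``in particular'' is immediate from the first part: if $x\in v^\perp$ then $[\varphi(x)]_0=-x\cdot v=0$, so the transformation formula $[\varphi'(x)]_2=[\varphi(x)]_2+\tfrac{c_1(F)}{\rk(F)}[\varphi(x)]_0$ recorded before the statement shows that replacing $\calU$ by $\calU\otimes p^*F$ leaves $[\varphi(x)]_2$ unchanged; since any two quasi-universal sheaves $\calU,\calV$ satisfy $\calU\otimes p^*E\simeq\calV\otimes p^*F$ for suitable vector bundles $E,F$ on $M$, their degree-two Mukai classes on $v^\perp$ coincide. I expect the only delicate step to be the identity $\tilde\iota^*v(\calU^\vee)=\rho\,v(F^\vee)$ — that forming the Mukai vector commutes with restricting the possibly non-locally-free quasi-universal sheaf to a fibre — which relies on the $M$-flatness of $\calU$; everything else is sign bookkeeping with the Mukai pairing.
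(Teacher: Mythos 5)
Your proof is correct, but it takes a different route from the paper's. The paper computes $[\varphi^{v(\calU^\vee)}(x)]_0$ as the generalised Mukai pairing $(\varphi^{v(\calU^\vee)}(x),\omega)$ with the fundamental cocycle $\omega\in H^{2n}(M,\Z)$ and then invokes the adjunction $(\varphi^{v(\calU^\vee)}(x),\omega)=(x,\varphi^{v(\calU^\vee)}_L(\omega))$ from C\u ald\u araru's \cite{Cal03}, reducing everything to the identity $\varphi^{v(\calU^\vee)}_L(\omega)=v(\Phi^{\calU}(\calO_t))=\rho v$. You instead restrict $\varphi(x)$ to a closed point of $M$ via proper base change for the projection $p$ and evaluate $\int_S x\cup v(F^\vee)$ by hand, converting the integral into the Mukai pairing by the sign identity $\int_S a\cup b=-(a^\vee\cdot b)$. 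Both arguments ultimately rest on the same geometric input, namely that the ($M$-flat) quasi-universal sheaf restricts to $F^{\oplus\rho}$ on $S\times\{[F]\}$, so that its Mukai vector restricts to $\rho v^\vee$ after dualising; your version trades the imported adjunction machinery for a self-contained computation whose only nonformal ingredients are the base-change identity and the compatibility of $\ch$ and derived duals with derived pullback of perfect complexes, which you correctly identify and justify via flatness. The deduction of the ``in particular'' clause from the transformation formula $[\varphi'(x)]_2=[\varphi(x)]_2+\frac{c_1(F)}{\rk(F)}[\varphi(x)]_0$ matches how the paper uses the lemma. The sign bookkeeping ($v(F^\vee)=v^\vee$ and invariance of the pairing under simultaneous dualisation) checks out.
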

\begin{proof}
    Let $\omega\in H^{2n}(M,\Z)$ be the fundamental cocycle. Let $\Phi^{\calU^\vee}:\Db(S)\to \Db(M)$ be the Fourier--Mukai transform with kernel $\calU^\vee$, and let $\varphi^{v(\calU^\vee)}: H^*(S,\Q)\to H^*(M,\Q)$ be the corresponding cohomological Fourier--Mukai transform, i.e. $\varphi^{v(\calU^\vee)} = \rho\cdot\varphi$.
    In the notation of \cite{Cal03},
    it follows that we have $$[\varphi^{v(\calU^\vee)}(x)]_0= (\varphi^{v(\calU^\vee)}(x),\omega) = (x,\varphi^{v(\calU^\vee)}_L(\omega)),$$
    where $\varphi^{v(\calU^\vee)}_L=\varphi^{v(\calU)}$ is the left-adjoint of $\varphi^{v(\calU^\vee)}$. On the other hand, we have $\varphi^{v(\calU^\vee)}_L(\omega) = v(\Phi^{\calU}(\calO_t))= \rho v$ for any $t\in M$. Hence we obtain $[\varphi(x)]_0 = \frac{1}{\rho}[\varphi^{v(\calU^\vee)}(x)]_0 = \frac{1}{\rho}(x,\rho v) = -x\cdot v$, as claimed.
\end{proof}

\begin{theorem}\cite{OGr97,Yos01}\label{thm:mukaimorphism}
    Let $S$ be a K3 surface, and let $v\in \N(S)$ be a primitive Mukai vector with $v^2>0$. Let $H$ be a $v$-generic polarisation, and denote $M\coloneqq M_H(v)$. Let $\varphi: H^*(S,\Q)\to H^*(M,\Q)$ be the Mukai morphism from Definition \ref{def:mukaimorphism}. 
    Then $\varphi$ induces a Hodge isometry
    \begin{equation*}
           \isomorphismstar{H^*(S,\Z)\supset v^\perp}{H^2(M,\Z)}{x}{[\varphi(x)]_2.} 
    \end{equation*}
    Here, $[\varphi(x)]_2$ denotes the degree 2 part of $\varphi(x)$. Moreover, this Hodge isometry is independent of the choice of a quasi-universal sheaf on $S\times M$.
\end{theorem}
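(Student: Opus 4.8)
The result is classical, going back to O'Grady \cite{OGr97} and Yoshioka \cite{Yos01}; the plan is to dispose of the formal parts first and then reduce the remaining content to an explicit model by a deformation argument. The asserted independence of $[\varphi(x)]_2$ from the chosen quasi-universal sheaf, for $x\in v^\perp$, is immediate from what precedes: by Lemma \ref{lem:mukaipairingmoduli} one has $[\varphi(x)]_0=-x\cdot v=0$ on $v^\perp$, and the transformation rule recorded after Definition \ref{def:mukaimorphism} then gives $[\varphi'(x)]_2=[\varphi(x)]_2+\tfrac{c_1(F)}{\rk(F)}[\varphi(x)]_0=[\varphi(x)]_2$. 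It therefore remains to show that $\psi\colon x\mapsto[\varphi(x)]_2$ (i) is a morphism of Hodge structures, (ii) takes $v^\perp\cap H^*(S,\Z)$ into $H^2(M,\Z)$, and (iii) is an isometry onto $H^2(M,\Z)$ for the Mukai pairing on the source and the Beauville--Bogomolov--Fujiki form on the target.

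Point (i) is formal. The Mukai morphism is induced by the algebraic correspondence $v(\calU^\vee)=\ch(\calU^\vee)\sqrt{\td}$, all of whose K\"unneth components lie in $\bigoplus_i H^{i,i}$, and $q^*$, cup product with an algebraic class, and $p_*$ are each morphisms of rational Hodge structures; hence $\varphi$ is one, and a degree count (only the degree-four part of $v(\calU^\vee)$ feeds into $[\varphi(x)]_2$ when $x\in H^2$) shows that $\psi$ is a morphism of weight-two Hodge structures. In particular $\psi(\sigma_S)$ is of type $(2,0)$, so it is a multiple $c\,\sigma_M$ of the symplectic form of $M$; once (iii) is known, $c\neq0$ and the two polarised Hodge structures are identified. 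I emphasise that (i) holds for \emph{every} admissible triple $(S,v,H)$, so only (ii) and (iii) need to be transported along a deformation.

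For (ii) and (iii) I would argue by deformation to an explicit model, following O'Grady and Yoshioka. Using Proposition \ref{prop:relativetwistedmodulispaces} and Definition/Proposition \ref{defprop:universaltwistedsheaves}, place $(S,v,H)$ in a family $(\calS,v,\mathcal H)\to T$ over a smooth connected base, let $\calM=\calM_{\calS/T}(P)\to T$ be the relative moduli space, and choose a relative quasi-universal sheaf on $\calS\times_T\calM$; the induced relative Mukai morphism restricts fibrewise to the map of the theorem and defines a morphism of $\Z$-local systems $(v^\perp)_T\to R^2p_*\Z$ on $T$ (the relative Mukai vector is a flat section, being of Hodge type $(i,i)$ fibrewise). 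A morphism of $\Z$-local systems over a connected base whose kernel, cokernel and quadratic-form defect vanish at one point vanish everywhere (each is a locally constant datum); hence it suffices to check (ii) and (iii) for a single convenient member, within the locus of data of fixed square, divisibility and C\u ald\u araru class, these being deformation invariants. When $\dv(v)=1$ one takes the model $v=(1,0,1-n)$, so that $M\cong S^{[n]}$ carries the universal ideal sheaf $I_{\calZ}$ of the universal subscheme $\calZ\subset S\times S^{[n]}$; when $\dv(v)>1$ one uses the corresponding explicit models of \cite{OGr97,Yos01}, e.g.\ relative compactified Jacobians over a linear system (Beauville--Mukai systems) with Mukai vectors of the required divisibility.

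In the Hilbert-scheme model a direct Grothendieck--Riemann--Roch computation with $I_{\calZ}$ on $S\times S^{[n]}$ identifies $\psi$: one has $v^\perp=\{(r,\ell,r(n-1)):r\in\Z,\ \ell\in H^2(S,\Z)\}$, the restriction of $\psi$ to $\{(0,\ell,0)\}$ is the natural inclusion $H^2(S,\Z)\hookrightarrow H^2(S^{[n]},\Z)$, and $\psi(1,0,n-1)=\pm\delta$, where $2\delta$ is the class of the exceptional divisor of the Hilbert--Chow morphism, with $\delta^2=2-2n=(1,0,n-1)^2$ and $\delta\perp H^2(S)$. This exhibits $\psi$ as the standard isometry onto $H^2(S^{[n]},\Z)$, settling (ii) and (iii) in the model, and hence in general (integrality in the $\dv(v)=1$ case can also be seen directly, since then a genuine universal sheaf exists and $\varphi^{v(\calE^\vee)}$ is integral by Grothendieck--Riemann--Roch). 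Alternatively, (iii) can be obtained up to a global rational scalar from the adjunction identity $\big(\varphi^{v(\calU^\vee)}(x),y\big)_M=\big(x,\varphi^{v(\calU)}(y)\big)_S$ of \cite{Cal03} together with the irreducibility of the transcendental Hodge structure, the scalar then being pinned down by the model computation. The main obstacle is precisely this deformation step: one must know, after O'Grady and Yoshioka, that every admissible $(S,v,H)$ of given square, divisibility and C\u ald\u araru class is connected through a family of $v$-generic moduli spaces to the chosen model, and that the relative Mukai morphism genuinely is a morphism of local systems, so that the integral Hodge isometry verified in the model is transported to the general fibre; the remaining ingredients are either formal, as in (i), or a bookkeeping exercise with Chern characters on $S\times S^{[n]}$.
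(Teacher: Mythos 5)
The paper does not prove this theorem: it is imported wholesale from \cite{OGr97,Yos01}, and the only ingredient the paper itself supplies in this direction is Lemma \ref{lem:mukaipairingmoduli}, whose consequence --- independence of $[\varphi(x)]_2$ from the quasi-universal sheaf for $x\in v^\perp$ --- you rederive correctly by the same computation. So there is no internal proof to compare against; what can be assessed is whether your sketch would actually constitute a proof.

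As a sketch it follows the right strategy (formal Hodge-theoretic part, deformation to an explicit model, a Grothendieck--Riemann--Roch computation on $S\times S^{[n]}$), and both the local-system argument and the Hilbert-scheme computation are correct in outline. But the step you yourself flag as ``the main obstacle'' is not a deferrable technicality: it is essentially the entire content of the cited works. One must show that every admissible triple $(S,v,H)$ with $v^2>0$ is connected, through families of smooth moduli spaces along which the Mukai morphism varies flatly, to an explicit model; for general $v$ this requires Yoshioka's chains of Fourier--Mukai transforms and elementary modifications relating moduli spaces with \emph{different} Mukai vectors (possibly on different K3 surfaces), not merely a deformation of $S$ with $v$ kept algebraic. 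Your proposal does not supply this, so it is a genuine gap. There is also a small but real error in the framing: you propose to deform ``within the locus of data of fixed square, divisibility and C\u ald\u araru class, these being deformation invariants.'' The divisibility of $v$ in $\N(S)$ is \emph{not} a deformation invariant --- it drops when $\NS(S)$ jumps, which is precisely what the paper exploits in Lemma \ref{lem:divisibilityinfamilies} and Proposition \ref{prop:deformuntwistedmoduli} to reach a fibre with $\dv(v)=1$. Fortunately the restriction is also unnecessary: the local-system argument only needs a connected base over which $v$ stays algebraic and $H$ stays $v$-generic, so the condition should simply be dropped rather than enforced.
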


When $v^2=0$, $M$ is a K3 surface, and the theorem holds if one replaces $v^\perp$ by $v^\perp/v$ \cite[Theorem 1.5]{Muk87}.

\subsection{The obstruction class of a two-dimensional moduli space}
In this subsection, let $S$ be a K3 surface with Mukai vector $v\in \N(S)$. Moreover, we assume that $v^2 = 0$. In this case, the moduli space $M\coloneqq M_H(v)$ is a K3 surface, where $H$ is a $v$-generic polarisation. In this setting, C\u ald\u araru computed the obstruction class $\alpha_M$ explicitly as a class in $\operatorname{Hom}(T(M),\Q/\Z) \simeq \Br(M)$. We now give a brief summary of his results.

Recall that the Mukai morphism is a Hodge isometry $v^\perp/v\simeq H^2(M,\Z)$. Since we have $T(S)\subset v^\perp\subset \tilde{H}(S,\Z)$, and $v\notin T(S)$, the Mukai morphism induces an embedding of Hodge lattices $T(S)\hookrightarrow T(M)$. Moreover, the quotient $T(M)/T(S)$ is finite and cyclic. 
There is a natural embedding $T(M)/T(S)\subset A_{T(S)}$. Via this embedding, $T(M)/T(S)$ is generated by the transcendental C\u ald\u araru class of $v$, denoted $\omega_v$ by \cite[Proposition 6.4]{Muk87} (see Definition \ref{def:caldararuclass}). This defines a Brauer class $\alpha_M$ on $M$ as the composition 
\begin{equation}\label{eq:obstructionbrauerclassfork3s}
    \alpha_M:T(M)\to T(M)/T(S)\simeq \Z/\dv(v)\Z,
\end{equation}
where the isomorphism $T(M)/T(S)\simeq \Z/\dv(v)\Z$ is the one that maps $\omega_v$ to $-\ol{1}\in \Z/\dv(v)\Z$. 

\begin{theorem}\cite{Cal00} \label{thm:obstructionsfork3s}
    The Brauer class $\alpha_M$ of equation \eqref{eq:obstructionbrauerclassfork3s} is the obstruction to the existence of a universal sheaf on $S\times M$. 
\end{theorem}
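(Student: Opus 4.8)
The plan is to route the comparison through the twisted Fourier--Mukai equivalence carried by a twisted universal sheaf, turning it into a bookkeeping statement inside the Mukai lattice. Write $\beta\in\Br(M)$ for the obstruction class of Definition/Proposition~\ref{defprop:universaltwistedsheaves}, and let $\calE$ be a $(1\boxtimes\beta)$-twisted universal sheaf on $S\times M$; we must show $\beta$ equals the class $\alpha_M$ defined in~\eqref{eq:obstructionbrauerclassfork3s}. Since $M$ is a K3 surface, Lemma~\ref{lem:brauerofak3} turns this into a comparison of homomorphisms $T(M)\to\Q/\Z$. Choosing a $\beta^{-1}$-twisted locally free sheaf $\calF$ on $M$ (\cite[Theorem~1.3.5]{Cal00}), the sheaf $\calU\coloneqq\calE\otimes p^{*}\calF$ is quasi-universal, and Theorem~\ref{thm:mukaimorphism} in its $v^{2}=0$ form (\cite[Theorem~1.5]{Muk87}) gives a Hodge isometry $\varphi\colon v^{\perp}/v\xrightarrow{\sim}H^{2}(M,\Z)$ which, restricted to the transcendental part and using $T(S)\subset v^{\perp}$ with $v\notin T(S)$, produces a Hodge embedding $T(S)\hookrightarrow T(M)$ whose cokernel $T(M)/\varphi(T(S))$ is cyclic and generated inside $A_{T(S)}$ by the transcendental C\u ald\u araru class $\omega_{v}$; this is the lattice computation \cite[Proposition~6.4]{Muk87} recalled before~\eqref{eq:obstructionbrauerclassfork3s}, and in particular $|T(M)/\varphi(T(S))|=\dv(v)$. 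Thus it suffices to prove that $\beta\colon T(M)\to\Q/\Z$ has kernel exactly $\varphi(T(S))$ and sends the distinguished generator $\omega_{v}$ of the quotient to $-\tfrac{1}{\dv(v)}$.

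To identify $\ker\beta$ I would pass through the twisted Fourier--Mukai equivalence $\Db(S)\xrightarrow{\sim}\Db(M,\beta)$ with kernel $\calE$ (\cite{Cal00}) and its cohomological realisation with respect to the Mukai pairing of \cite{Cal03,CW10}: it yields a Hodge isometry of full Mukai lattices $\wt H(S,\Z)\xrightarrow{\sim}\wt H(M,B,\Z)$, where $B\in H^{2}(M,\Q)$ is a $B$-field lifting $\beta$ and $\wt H(M,B,\Z)$ denotes $\wt H(M,\Z)$ with the $B$-twisted weight-two Hodge structure whose $(2,0)$-part is spanned by $\exp(B)\cdot(0,\sigma_{M},0)$. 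Restricting to transcendental parts gives a Hodge isometry $T(S)\cong T(M,B)$, where $T(M,B)\subset\wt H(M,\Z)$ is the orthogonal complement of the $B$-twisted $(1,1)$-lattice.

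The heart of the matter is then a $B$-field bookkeeping argument. Applying $\exp(-B)$ and projecting to degree two should identify $T(M,B)$ with $\varphi(T(S))\subset T(M)\subset H^{2}(M,\Z)$, realise $T(M)/\varphi(T(S))$ as $\Z/\ord(\beta)\Z$, and show that the resulting quotient map $T(M)\to\Z/\ord(\beta)\Z\hookrightarrow\Q/\Z$ is precisely $\beta$ under $\Br(M)\cong\Hom(T(M),\Q/\Z)$; the compatibility of this ``degree-two part'' with the untwisted Mukai morphism $\varphi$ follows from the last sentences of Theorem~\ref{thm:mukaimorphism} and Lemma~\ref{lem:mukaipairingmoduli}, as $\calE$ and $\calE\otimes p^{*}\calF$ have the same degree-two cohomological transform on $v^{\perp}$. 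Together with the first paragraph this yields $\ker\beta=\varphi(T(S))$ and $\ord(\beta)=\dv(v)$. I expect this step to be the main obstacle: the twisted Mukai lattice $\wt H(M,B,\Z)$ sits inside $\wt H(M,\Z)$ while $T(M)$ sits inside $H^{2}(M,\Z)$, and the dictionary between the two passes through the rational class $B$, so integrality has to be tracked carefully -- and it is exactly here that both the order of $\beta$ and the equality $\ker\beta=\varphi(T(S))$ are extracted.

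It remains to fix the sign: computing the image of $\sigma_{S}$ under the explicit kernel, i.e.\ evaluating $\varphi^{v(\calU^{\vee})}$ and $\exp(B)$ against the fundamental cocycle of $M$ exactly as in the proof of Lemma~\ref{lem:mukaipairingmoduli}, forces $\omega_{v}\mapsto-\tfrac{1}{\dv(v)}$ and matches the normalisation of~\eqref{eq:obstructionbrauerclassfork3s}, completing the proof. As an alternative to the twisted-categorical input one can argue entirely by deformation: the relative obstruction class of Definition/Proposition~\ref{defprop:universaltwistedsheaves} and the lattice-theoretic class~\eqref{eq:obstructionbrauerclassfork3s} are both invariant under parallel transport in the connected family of K3 surfaces carrying a Mukai vector of the fixed deformation type of $(S,v)$, so the theorem reduces to a single convenient pair $(S_{0},v_{0})$; choosing $v_{0}=(0,L_{0},0)$ with $L_{0}^{2}=0$, $L_{0}$ effective and of divisibility $\dv(v)$ in $\NS(S_{0})$, one has $M_{H}(v_{0})=\olPic^{\,0}(S_{0}/\PP^{1})$, the Jacobian K3 of a genus-one fibration, and there the obstruction class is computed classically from the torsor class of $S_{0}$ via the Leray spectral sequence for $\bbG_{m}$ (see \cite[\S11.4]{Huy16}).
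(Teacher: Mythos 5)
First, a point of comparison: the paper does not prove Theorem \ref{thm:obstructionsfork3s} at all --- it is quoted from \cite{Cal00} --- so the relevant benchmark is the paper's proof of the higher-dimensional analogue, Theorem \ref{thm:explicitobstruction}, which (like C\u ald\u araru's original argument) proceeds by deforming $(S,v)$ inside a family $\calS\to T$ keeping $v$ algebraic until $\dv(v)=1$, where the moduli space is fine, and then invoking Theorem \ref{thm:CaldaIdentificationBrauerClassFamily} to read off the obstruction class on the original fibre as $-c_1(\calF_1)/\rk(\calF_1)$ for the twisting bundle $\calF$. Your primary route (twisted Fourier--Mukai kernels, $B$-field lifts, and the twisted Mukai lattice $\wt H(M,B,\Z)$ of Huybrechts--Stellari) is a genuinely different and in principle viable strategy, but as written it has a gap exactly where the theorem's content sits: the identification of $T(M,B)$ with $\varphi(T(S))\subset T(M)$ and the extraction of $\beta$ from the quotient is announced with ``should identify'' and flagged by you as ``the main obstacle'', not carried out. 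Note also that even granting $\ker\beta=\varphi(T(S))$, this only pins $\beta$ down up to multiplication by a unit of $\Z/\dv(v)\Z$; the normalisation $\omega_v\mapsto -1/\dv(v)$, which is the actual assertion of \eqref{eq:obstructionbrauerclassfork3s}, is again asserted rather than computed in your final paragraph.

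Your alternative deformation argument contains a concrete error. Parallel transport in a family of K3 surfaces acts only on $H^2$; the components $r\in H^0$ and $s\in H^4$ of $v=(r,\ell,s)$ are constant along any such family, so a general isotropic $v$ cannot be deformed to $v_0=(0,L_0,0)$, and the reduction to the Jacobian of a genus-one fibration is unavailable. The correct reduction (as in the proof of Theorem \ref{thm:explicitobstruction}) keeps $(r,s)$ fixed and moves the period until $\dv(v)=1$. Moreover, the claim that the fibrewise obstruction class is ``invariant under parallel transport'' is not a formality: Definition/Proposition \ref{defprop:universaltwistedsheaves} only says the relative class restricts to the fibrewise one, and knowing $\beta_1=\alpha_{M_1}=0$ at one fibre says nothing about the others unless you can express $\beta_t$ as a flat section of the local system $\Hom(T(M_t),\Q/\Z)$ --- that is precisely what Theorem \ref{thm:CaldaIdentificationBrauerClassFamily} supplies, via the topologically constant class $c_1(\calF_t)/\rk(\calF_t)$, and your sketch never invokes it. With the target of the deformation corrected and that theorem inserted, your alternative becomes essentially the paper's (and C\u ald\u araru's) argument.
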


The description of the obstruction Brauer class $\alpha_M$ of Theorem \ref{thm:obstructionsfork3s} is useful because it is so explicit. For example, it is essential in \cite{MS24} to study Fourier--Mukai partners of elliptic K3 surfaces. Unfortunately, the description uses the fact that $\Br(M)\simeq \Hom(T(M),\Q/\Z)$, which is only the case when $M$ is a K3 surface. Another, equivalent, way to view $\alpha_M$ is as follows.

Consider the short exact sequence 
\begin{equation}\label{eq:sestranscendentalfork3s}
    0\to T(S) \to T(M) \to \langle \omega_v\rangle \to 0,
\end{equation}
which induces the following short exact sequence by taking duals:
\begin{equation}\label{eq:sestranscendentalduals}
    0 \to T(M)^* \to T(S)^* \to \Z/\dv(v)\Z \to 0.
\end{equation}

Since $v\in \tilde{H}(S,\Z)$ is primitive, and $\tilde{H}(S,\Z)$ is unimodular, it follows that the divisibility of $v$ in $\tilde{H}(S,\Z)$ is $1$. That is, there exists a vector $u\in \tilde{H}(S,\Z)$ with the property that $u\cdot v = 1$. 
We claim that $$(u\cdot -)|_{T(S)} \in T(S)^*,$$ which is the image of $u$ under the natural surjection $\tilde{H}(S,\Z) \to T(S)^*$, induces a generator of $T(S)^*/T(M)^*$. This is proven in Lemma \ref{lem:sestranscendental} in a more general setting. We abuse notation slightly and write $u$ for $(u\cdot -)|_{T(S)}$. Let us write $w\in T(M)^*$ for the element which maps to $\dv(v)\cdot u \in T(S)^*$.

Note that taking the tensor product of \eqref{eq:sestranscendentalduals} with $\Q/\Z$ yields:
\begin{equation}\label{eq:sesbrauerfork3}
    0 \to \Z/\dv(v)\Z \to \Br(M) \to \Br(S) \to 0.
\end{equation}
The kernel of \eqref{eq:sesbrauerfork3} is generated by $$\frac{w}{\dv(v)} \in \Br(M),$$ and this is precisely the obstruction class $\alpha_M$ of Theorem \ref{thm:obstructionsfork3s}, see \cite{Cal00} and \cite{MS24}. 

Note that the order of $\alpha_M$ in $\Br(M)$ is equal to the divisibility $\dv(v)$ of $v$ in $\N(S)$. We can see this from short exact sequence \eqref{eq:sesbrauerfork3}, since the kernel is generated by the obstruction class. We can also use \eqref{eq:obstructionbrauerclassfork3s} to find the order of $\alpha_M$, since \eqref{eq:obstructionbrauerclassfork3s} exhibits $\alpha_M$ as a surjective group homomorphism $\alpha_M\colon T(M)\to \Z/\dv(v)\Z$.

\begin{corollary} \cite{Cal00} \label{cor:fineiffdivoneindimensiontwo}
    Let $S$ be a K3 surface with primitive Mukai vector $v\in \N(S)$ and $v$-generic polarisation $H$. The moduli space $M_H(v)$ is fine if and only if $\dv(v)=1$.
\end{corollary}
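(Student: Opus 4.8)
The plan is to read the statement directly off the explicit description of the obstruction class furnished by Theorem~\ref{thm:obstructionsfork3s}. First I would recall that, by Definition/Proposition~\ref{defprop:universaltwistedsheaves}, a $(1\boxtimes\alpha_M)$-twisted universal sheaf always exists on $S\times M$, and that $M=M_H(v)$ is a fine moduli space precisely when this twisted universal sheaf can be taken untwisted, i.e. precisely when the obstruction class $\alpha_M\in\Br(M)$ vanishes. So the corollary is equivalent to the assertion that $\alpha_M=0$ if and only if $\dv(v)=1$.

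Next I would invoke Theorem~\ref{thm:obstructionsfork3s}: under the identification $\Br(M)\simeq\Hom(T(M),\Q/\Z)$ of Lemma~\ref{lem:brauerofak3}, the class $\alpha_M$ is the homomorphism of \eqref{eq:obstructionbrauerclassfork3s}, namely the quotient map $T(M)\twoheadrightarrow T(M)/T(S)$ composed with the isomorphism $T(M)/T(S)\simeq\Z/\dv(v)\Z$ and the inclusion $\Z/\dv(v)\Z\hookrightarrow\Q/\Z$. Here $T(M)/T(S)\subset A_{T(S)}$ is cyclic, generated by the transcendental C\u ald\u araru class $\omega_v$ of Definition~\ref{def:caldararuclass} by \cite[Proposition~6.4]{Muk87}, and $\omega_v$ has order exactly $\dv(v)$: it corresponds to $v/\dv(v)\in A_{\N(S)}$ under $A_{T(S)}(-1)\simeq A_{\N(S)}$, and $v$ is primitive in $\N(S)$. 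Hence $\alpha_M$, viewed as a map $T(M)\to\Q/\Z$, is surjective onto $\tfrac{1}{\dv(v)}\Z/\Z$.

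Finally I would conclude that $\alpha_M=0$ if and only if the group $\tfrac{1}{\dv(v)}\Z/\Z$ is trivial, that is, if and only if $\dv(v)=1$. One may equally well argue from the short exact sequence \eqref{eq:sesbrauerfork3}, whose kernel $\Z/\dv(v)\Z$ is generated by $\alpha_M$; and for the implication $\dv(v)=1\Rightarrow M$ fine one can alternatively just cite Proposition~\ref{prop:fineifdivone}. I do not expect a genuine obstacle here: the whole content of the corollary already sits inside Theorem~\ref{thm:obstructionsfork3s}, and the only step that really demands a line of justification is that the target of $T(M)/T(S)\simeq\Z/\dv(v)\Z$ has order exactly $\dv(v)$, which is the order computation for $\omega_v$ recalled above.
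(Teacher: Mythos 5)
Your argument is correct and is essentially the one the paper intends: the corollary is read off from Theorem~\ref{thm:obstructionsfork3s} together with the observation, made in the paragraph preceding the corollary, that \eqref{eq:obstructionbrauerclassfork3s} (equivalently, the short exact sequence \eqref{eq:sesbrauerfork3}) exhibits $\alpha_M$ as an element of order exactly $\dv(v)$ in $\Br(M)$. Your justification that $\omega_v$ has order $\dv(v)$ via the primitivity of $v$ is the one genuinely needed step, and you supply it correctly.
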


\subsection{Moduli spaces of lattice-polarised K3 surfaces}
In this section, we collect some basic facts about moduli spaces of lattice polarised K3 surfaces. Our main reference is \cite{Dol95}. An important technical result in this section is Lemma \ref{lem:divisibilityinfamilies}, which is used to prove the main result of Section \ref{sec:identifyingobstructionclass} below. 

\begin{definition}
    For a lattice $T$ of signature $(2,n)$, we define the period domain $\Omega_T$ of $T$ to be one of the two connected components of 
    \begin{equation}\label{eq:nottheperioddomain}
    \left\{\sigma\in \PP(T\otimes \CC)\mid \sigma^2 = 0 \text{ and } \sigma\cdot\overline{\sigma}>0\right\}.
    \end{equation}
\end{definition}

The orthogonal group $O(T)$ acts naturally on the set \eqref{eq:nottheperioddomain}, and we write $$O^+(T)\coloneqq \left\{\sigma\in O(T)\mid \sigma(\Omega_T) = \Omega_T\right\}$$ for the subgroup of $O(T)$ consisting of isometries that preserve the connected component $\Omega_T$. The index of $O^+(T)\subset O(T)$ is two.

Let $N$ be an even lattice of signature $(1,\rho-1)$ for some $1\leq \rho \leq 20$. Suppose that there exists precisely one $O(\klat)$-orbit of primitive embeddings $N\hookrightarrow \klat$. This is the case for example when $\rho\leq 10$ by \cite[Theorem 1.14.4]{Nik80}.
An \textit{$N$-marked K3 surface} is a pair $(S,j)$ consisting of a K3 surface $S$ and a primitive embedding $j:N\hookrightarrow \NS(S)$. 
An isomorphism of $N$-marked K3 surfaces $(X,i)$, $(Y,j)$ is an isomorphism $f: X\simeq Y$ such that $i = f^*\circ j$.

A \textit{marked K3 surface} is a pair $(S,\phi)$ consisting of a K3 surface $S$ and an isometry $\phi: H^2(S,\Z)\simeq \klat$. Fix an embedding $N\subset \klat$. If $(S,\phi)$ is a marked K3 surface such that $N\subset \phi(\NS(S))$, then $(S,\phi^{-1}|_N)$ is an $N$-marked K3 surface.

Let $T = N^\perp\subset \klat$ be the orthogonal complement. We write $$\toplus(T)\coloneqq \ker\left(O^+(T)\to O(A_T)\right).$$ If $(S,j)$ is a $N$-marked K3 surface, then there exists a marking $\phi: H^2(S,\Z)\simeq \klat$ such that the period of the marked K3 surface $(S,\phi)$, i.e. $[\phi(H^{2,0}(S,\bbC))]$, lies in $\Omega_T$. Moreover, two isomorphic $N$-marked K3 surfaces give rise to periods which lie in the same $\toplus(T)$-orbit.

\begin{definition}
    We denote $$\calF_{T}\coloneqq \toplus(T)\setminus \Omega_T.$$
\end{definition}
\begin{theorem}\cite[\S3]{Dol95}
    The quotient $\calF_T$ is the coarse moduli space of $N$-marked K3 surfaces. The moduli space $\calF_T$ is an equidimensional quasi-projective variety of dimension $20-\rho$.
\end{theorem}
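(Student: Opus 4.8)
The plan is to reduce the statement to two deep theorems about K3 surfaces---surjectivity of the period map and the global Torelli theorem---and to combine them with the lattice theory recalled above (above all Lemma~\ref{lem:nikulinorthogonal}) and the Baily--Borel theorem. Throughout, I take an $N$-marked K3 surface in the sense of \cite{Dol95}, so that the primitive embedding $j\colon N\hookrightarrow\NS(S)$ is required to send a fixed class of positive square in $N$ to a pseudo-ample class; this is what pins down a connected component and a Weyl chamber in what follows.

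First I would identify the target. Since $\klat$ has signature $(3,19)$ and $N$ has signature $(1,\rho-1)$, the orthogonal complement $T=N^\perp\subset\klat$ has signature $(2,20-\rho)$, so $\Omega_T$ is a connected Hermitian symmetric domain of type IV of complex dimension $20-\rho$. The group $\toplus(T)\subset O^+(T)$ is arithmetic and acts properly discontinuously on $\Omega_T$ with finite stabilisers, hence $\calF_T=\toplus(T)\setminus\Omega_T$ is a normal complex space, irreducible (because $\Omega_T$ is connected) and therefore of pure dimension $20-\rho$, and it is quasi-projective by the Baily--Borel theorem. This settles the dimension assertion.

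Next I would construct the refined period map on points. Fix a primitive embedding $\iota\colon N\hookrightarrow\klat$ with orthogonal complement $T$, unique up to $O(\klat)$ by hypothesis. To an $N$-marked K3 surface $(S,j)$ I associate the period of a marking $\phi\colon H^2(S,\Z)\xrightarrow{\sim}\klat$ with $\phi\circ j=\iota$ and $[\phi(H^{2,0}(S))]\in\Omega_T$. The remaining freedom in $\phi$ is an isometry of $\klat$ that is the identity on $N$; by Lemma~\ref{lem:nikulinorthogonal} its restriction to $T$ lies in $\toplus(T)$, and conversely every element of $\toplus(T)$ occurs this way, so I obtain a well-defined point $\mathrm{per}(S,j)\in\calF_T$. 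Surjectivity of $\mathrm{per}$ is surjectivity of the period map: for $\sigma\in\Omega_T$, choose a marked K3 surface $(S,\phi)$ with $\phi(H^{2,0}(S))=\CC\sigma$; then $\phi^{-1}(\iota(N))\subseteq\NS(S)$ since $\iota(N)\subset\sigma^\perp$, so $\phi^{-1}\circ\iota\colon N\hookrightarrow\NS(S)$ is a primitive embedding, and $S$ is projective because $N$, hence $\NS(S)$, contains a class of positive square; composing $\phi^{-1}\circ\iota$ with a Weyl-group element of $\NS(S)$ (which acts trivially on $T(S)$, hence changes neither the period nor the $\toplus(T)$-orbit) so that the distinguished class becomes pseudo-ample yields an $N$-marked K3 surface with period $\sigma$. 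Injectivity of $\mathrm{per}$ is the global Torelli theorem: two $N$-marked K3 surfaces with the same period produce a Hodge isometry of their second cohomology lattices restricting to the identity on $N$; composing with reflections in $(-2)$-classes orthogonal to $j(N)$, and with $\pm\mathrm{id}$ for orientation, makes it effective, whereupon it is induced by an isomorphism of $N$-marked K3 surfaces.

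Finally I would upgrade the bijection to the coarse-moduli statement. For a family $f\colon\mathcal X\to B$ of K3 surfaces over a connected base, equipped with a primitive embedding $N\hookrightarrow\Pic(\mathcal X/B)$ inducing the $N$-markings fibrewise, the variation of Hodge structure on $R^2f_*\Z$ together with a local trivialisation compatible with the $N$-marking gives a holomorphic map to $\Omega_T$; changes of trivialisation and monodromy modify it by $\toplus(T)$ (again via Lemma~\ref{lem:nikulinorthogonal}), yielding a canonical holomorphic $\mathrm{per}_B\colon B\to\calF_T$, functorial in $B$. This is the natural transformation realising $\calF_T$ as a coarse moduli space, inducing the above bijection on points. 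For the universal property, if a complex space $Z$ admits a natural transformation $\eta$ from the moduli functor, bijectivity on points forces a unique set map $h\colon\calF_T\to Z$ with $h\circ\mathrm{per}=\eta$, and its holomorphy is verified over polydiscs $U\subset\Omega_T$ on which $\toplus(T)$ acts freely, where the Kuranishi deformation of a single K3 surface supplies a tautological $N$-marked family whose image under $\eta$ is a holomorphic lift of $h$; these local lifts glue. The two genuinely hard inputs---surjectivity of the period map and global Torelli---are quoted, and among the remaining steps the main obstacles are the Weyl-chamber and orientation bookkeeping in the injectivity argument, and the holomorphy of $h$ at points of $\calF_T$ with nontrivial finite stabiliser, which is precisely why $\calF_T$ is only a coarse, not a fine, moduli space.
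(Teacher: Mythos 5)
This statement is quoted from \cite[\S3]{Dol95} and the paper supplies no proof of its own, so there is nothing internal to compare against; your sketch is essentially a reconstruction of Dolgachev's argument, and it is correct in outline. You identify the right ingredients in the right places: signature count and Baily--Borel for the dimension and quasi-projectivity claims; Lemma~\ref{lem:nikulinorthogonal} to show that the ambiguity in a marking compatible with $\iota$ is exactly $\toplus(T)$ (with the small extra observation, which you implicitly use, that an isometry of $T$ carrying one point of $\Omega_T$ to another necessarily lies in $O^+(T)$); surjectivity of the period map plus Weyl-group adjustment for surjectivity; global Torelli plus reflections for injectivity; and Kuranishi families for the local holomorphic lifts needed in the universal property. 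The one substantive point worth highlighting is your opening convention: the paper's own Definition of an $N$-marked K3 surface omits Dolgachev's requirement that a distinguished positive-square class of $N$ map to a pseudo-ample class, and without that condition the period map is not injective on isomorphism classes (compose $j$ with a Weyl-group element of $\NS(S)$: the period is unchanged but the marked surfaces are generally non-isomorphic), so the theorem would be false as literally stated. Reinstating Dolgachev's convention, as you do, is necessary and correct.
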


Let $S$ be a K3 surface, and let $v\in \N(S)$ be a primitive Mukai vector. Write $v = (r,E,s)$ for some $r,s\in \Z$ and $E\in \NS(S)$. Let $H\in \NS(S)$ be an ample divisor. 
Let $M$ be the saturation of $\langle H,E\rangle$ in $\NS(S)$. That is, $M=\left(\langle H,E\rangle\otimes \Q\right) \cap \NS(S)$ is the smallest primitive sublattice of $\NS(S)$ which contains $\langle H,E\rangle$. 
Fix any marking $\phi: H^2(S,\Z)\simeq \klat$. Write $L\coloneqq \phi(M)\subset \klat$ and $e = \phi(E)$, $h = \phi(H)$. A point in the moduli space $\calF_{L^\perp}$ corresponds to an $L$-marked K3 surface $(X,i)$ for which the vector $(r,i(e),s)\in \N(X)$ is a Mukai vector, which we denote by $i(v)$.
We wish to show that the locus in $\calF_{L^\perp}$ of $L$-marked K3 surfaces $(X,i)$ for which the Mukai vector $i(v)$ has divisibility $1$ is dense.  

From the definition of the period domain, it follows that we have $$\Omega_S = \Omega_T\cap \PP(S\otimes \CC),$$ for any primitive sublattice $S\subset T$. By a slight abuse of notation, we write $\calF_S\subset \calF_T$ for the image of $\Omega_S$ along the natural projection $\Omega_T\twoheadrightarrow \calF_T$. If we have $\rk(S)+1 = \rk(T)$, then $\calF_S$ is a divisor in $\calF_T$. 

\begin{remark}\label{rem:perioddomainknowsaboutthelattice}
    Fix a primitive sublattice $N \subset \klat$ of signature $(1,\rho-1)$ and write $S = N^\perp$. For a point $[\ell]\in \Omega_S$, we denote by $\NS_\ell$ the integral $(1,1)$-part of the Hodge lattice of K3 type whose underlying lattice is $\klat$ and whose $(2,0)$-part is $\ell$. It is a well-known fact that, for a very general point $[\ell]$, we have $\NS_\ell = N$. To see this, suppose that $[\ell]\in \Omega_N$ satisfies $N\not\subset\NS_\ell.$ Then it follows from the definitions that $[\ell]\in \Omega_{\NS_\ell^\perp}\subset \Omega_S$. Recall that we have $\dim(\Omega_{\NS_\ell^\perp}) = \rk(\NS_\ell^\perp) - 2 < \dim(\Omega_S) =\rk(S)-2$. Since there are countably many chains of primitive embeddings $N\hookrightarrow L\hookrightarrow \klat$, it follows that the points of $\Omega_S$ for which $N$ is a proper sublattice of $\NS_\ell$ form a union of countably many lower-dimensional subvarieties of $\Omega_S$. Thus, the very general point of $\Omega_S$ satisfies $N = \NS_\ell$, as required.

    Suppose now that we have another primitive sublattice $L\subset\klat$ of signature $(1,\rho)$ which is not equal to $N$ as a sublattice of $\klat$, and write $T = L^\perp$. By the above discussion, a very general point of $\Omega_T$ has $\NS_\ell = L$. In particular, we have $\Omega_T\neq \Omega_S$. To rephrase this, if $N$ and $L$ are two primitive sublattices of $\klat$, then we have $\Omega_{N^\perp} = \Omega_{L^\perp}$ if and only if $L = N$.
\end{remark}

\begin{proposition}\label{prop:densitycorankone}
    Let $L\subset \klat$ be a sublattice of signature $(1,\rho-1)$. Assume that $\rho\leq 9$. Suppose that $\left\{L_n\right\}_{n\in \mathbb{N}}$ is a set of pairwise non-isometric lattices of rank $(1,\rho)$, and for each $n\in \mathbb{N}$, we have a chain of primitive embeddings $L\hookrightarrow L_n\hookrightarrow \klat$.  Then the set $$\bigcup_{n\in \mathbb{N}}\calF_{L_n^\perp}$$ is dense in $\calF_{L^\perp}$.
\end{proposition}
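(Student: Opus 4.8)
The plan is to deduce the statement from a soft dimension count inside the irreducible quasi-projective variety $\calF_{L^\perp}$, of dimension $d \coloneqq 20-\rho$. First I would establish two facts: that each $\calF_{L_n^\perp}$ is a closed irreducible subvariety of $\calF_{L^\perp}$ of codimension one, and that the $\calF_{L_n^\perp}$ are pairwise distinct, so that there are infinitely many of them. Granting this, suppose $Z \coloneqq \overline{\bigcup_{n} \calF_{L_n^\perp}}$ were a proper closed subset of $\calF_{L^\perp}$. Then $\dim Z \le d-1$, and since $\calF_{L^\perp}$ is of finite type, $Z$ has only finitely many irreducible components, each of dimension $\le d-1$. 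But each of the infinitely many distinct $(d-1)$-dimensional irreducible closed subsets $\calF_{L_n^\perp} \subseteq Z$ would then have to coincide with one of the finitely many $(d-1)$-dimensional components of $Z$ --- a contradiction. Hence $Z = \calF_{L^\perp}$, which is the assertion.

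For the first fact, recall from \cite[\S3]{Dol95} that $\calF_{L^\perp} = \toplus(L^\perp)\setminus\Omega_{L^\perp}$ is quasi-projective of dimension $d$; it is irreducible because $\Omega_{L^\perp}$ is connected and the quotient is normal. Since $\rk(L_n) = \rho+1 \le 10$ (this is where the hypothesis $\rho \le 9$ enters), there is a unique $O(\klat)$-orbit of primitive embeddings $L_n \hookrightarrow \klat$ by \cite[Theorem 1.14.4]{Nik80}, so \cite[\S3]{Dol95} applies to $L_n$ as well: the variety $\calF_{L_n^\perp} = \toplus(L_n^\perp)\setminus\Omega_{L_n^\perp}$ is irreducible quasi-projective of dimension $d-1$ (the drop by one reflecting $\rk(L_n)=\rk(L)+1$), and restriction of markings along the fixed embedding $L\hookrightarrow L_n$ defines a morphism $\calF_{L_n^\perp}\to\calF_{L^\perp}$ with finite fibres. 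Its image is the subset denoted $\calF_{L_n^\perp}\subset\calF_{L^\perp}$ in the statement; it is constructible, and it is closed in the classical topology because its preimage in $\Omega_{L^\perp}$ is the locally finite union $\bigcup_{g\in\toplus(L^\perp)}\Omega_{g(L_n^\perp)}$ of closed subsets. A constructible subset that is classically closed is Zariski closed, so $\calF_{L_n^\perp}$ is indeed a closed irreducible subvariety of $\calF_{L^\perp}$ of codimension one.

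The step I expect to require the actual argument --- and the only place where the hypothesis that the $L_n$ are pairwise non-isometric is used --- is the pairwise distinctness. The key input is Remark \ref{rem:perioddomainknowsaboutthelattice}, applied to the primitive sublattice $L_n\subset\klat$: a very general point of $\calF_{L_n^\perp}$ corresponds to an $L$-marked K3 surface $(X,i)$ with $\NS(X)$ isometric to $L_n$. Since the N\'eron--Severi lattice of a very general point of an irreducible subvariety of $\calF_{L^\perp}$ is constant, the subvariety $\calF_{L_n^\perp}$ determines the isometry class of $L_n$, and hence pairwise non-isometric lattices give pairwise distinct divisors. (Alternatively, an equality $\calF_{L_n^\perp} = \calF_{L_m^\perp}$ would force $\Omega_{L_n^\perp} = g(\Omega_{L_m^\perp})$ for some $g\in\toplus(L^\perp)$, using irreducibility of $\Omega_{L_n^\perp}$ and local finiteness of the orbit; by Lemma \ref{lem:nikulinorthogonal} such a $g$ extends to an isometry of $\klat$ fixing $L$, whence $L_n$ and $L_m$ would be isometric.)

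Finally, I would remark that one can also obtain density in the classical topology, via the standard density of Noether--Lefschetz loci (perturb a given period $[\sigma_0]\in\Omega_{L^\perp}$ to one orthogonal to a suitable primitive vector of $L^\perp$), though arranging the admissible values of $v^2$ takes a little more care; for the applications in this paper the Zariski statement above, together with the clean dimension count, is all that is needed.
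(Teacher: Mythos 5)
Your argument for the pairwise distinctness of the divisors $\calF_{L_n^\perp}$ is essentially the paper's: both reduce a putative equality $\calF_{L_n^\perp}=\calF_{L_m^\perp}$ to an equality $g(\Omega_{L_n^\perp})=\Omega_{L_m^\perp}$ for some $g\in\toplus(L^\perp)$ via countability of the group and a codimension count, identify the lattices using Remark \ref{rem:perioddomainknowsaboutthelattice}, and extend $g$ to an isometry of $\klat$ fixing $L$ to contradict the hypothesis that $L_n$ and $L_m$ are non-isometric. That part is correct and matches the paper.

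The gap is in the density step, and specifically in your closing claim that the Zariski statement ``is all that is needed.'' Your dimension count only yields Zariski density, and a countable union of pairwise distinct irreducible divisors can be Zariski dense without being dense in the classical topology (e.g.\ $\bigcup_n\{x_1=n\}$ in $\bbC^2$). But the proposition is consumed, via Lemma \ref{lem:divisibilityinfamilies}, inside the proof of Proposition \ref{prop:deformuntwistedmoduli}, where $T$ is a small \emph{contractible} --- hence analytic, not Zariski --- open neighbourhood of $0\in\calF_{L^\perp}$, and one must produce a point of $T$ lying on some $\calF_{L_n^\perp}$. Zariski density provides no such point, so the intended meaning of ``dense'' here is density in the Euclidean topology. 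This is precisely why the paper invokes \cite[Theorem 3.8]{MP23}: the Euclidean closure of $\bigcup_n\calF_{L_n^\perp}$ is a Shimura subvariety of $\calF_{L^\perp}$, and only then does your finiteness-of-components argument (applied to that closed algebraic subvariety, which contains infinitely many distinct codimension-one special subvarieties) force the closure to be all of $\calF_{L^\perp}$. Your alternative sketch via perturbing a period into a Noether--Lefschetz locus does not close the gap either: the standard density argument lands you on \emph{some} corank-one locus, whereas here you must land on one of the \emph{prescribed} loci $\calF_{L_n^\perp}$, i.e.\ realise a Picard lattice isometric to one of the given $L_n$ near an arbitrary period; controlling this is essentially the content of the equidistribution input and is not supplied by your argument.
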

\begin{proof}
    We first show that we have $\calF_{L_n^\perp}\neq \calF_{L_m^\perp}$ whenever $m\neq n$. We prove this by contradiction. Suppose that we have $\calF_{L_m^\perp} = \calF_{L_n^\perp}$ for some $m,n\in \mathbb{N}$ with $m\neq n$. Then we have 
    $$\bigcup_{g\in \toplus(L^\perp)}g(\Omega_{L_n^\perp})\cap \Omega_{L_m^\perp} = \Omega_{L_m^\perp}.$$
    Note that for any $g\in \toplus(L^\perp)$, 
    the subspace $g(\Omega_{L_n^\perp})\cap \Omega_{L_m^\perp}$ has codimension $1$ or $0$ in $\Omega_{L_m^\perp}$. Since $\toplus(L^\perp)$ is countable, 
    this means that there is a $g\in \toplus(L^\perp)$ such that 
    $g(\Omega_{L_n^\perp})\cap\Omega_{L_m^\perp}$ has codimension $0$, i.e. for which we have $g(\Omega_{L_n^\perp}) = \Omega_{L_m^\perp}.$ Since $g(\Omega_{L_n^\perp}) = \Omega_{g(L_n^\perp)},$ it follows from Remark \ref{rem:perioddomainknowsaboutthelattice} that we have $g(L_n^\perp) = L_m^\perp$. Since we have $g\in \toplus(L^\perp)$, we may extend $g$ to an isometry $f: \klat\simeq\klat$ with $f|_L = \operatorname{id}_L$. Moreover, since $g(L_n^\perp) = L_m^\perp$, we have $f(L_n) = L_m$, i.e. $f$ restricts to an isometry $L_n\simeq L_m$, which is a contradiction with the assumption that $L_n$ and $L_m$ are non-isometric. 
    
    To conclude, we use \cite[Theorem 3.8]{MP23}: The Euclidian closure of the union $$\bigcup_n\calF_{L_{n}^\perp}$$ is contained in a Shimura subvariety of $\calF_{L^\perp}$. Since all subspaces $\calF_{L_n^\perp}$ have codimension 1, this Shimura subvariety has to be all of $\calF_{L^\perp}$.
\end{proof}

\begin{lemma}\label{lem:divisibilityinfamilies}
    Let $(S,\phi)$ be a marked K3 surface.
    Let $v = (r,E,s)\in \operatorname{N}(S)$ be a primitive Mukai vector, and let $H\in \NS(S)$ be an ample divisor. Write $h = \phi(H)$ and $e = \phi(E)$. Let $L\subset \klat$ be the saturation of the sublattice $\langle h,e\rangle$. We consider the $L$-marked K3 surface $(S,\phi^{-1}|_L)$ as an element of the moduli space $\calF_{L^\perp}$. Then the set of points $[(X,i)]\in \calF_{L^\perp}$ for which 
    \begin{equation}\label{eq:divisibility}
    \dv_{\operatorname{N}(X)}(i(v))=1,
    \end{equation}
    where $i(v)\coloneqq (r,i(e),s)$,
    is dense in $\calF_{L^\perp}$.
\end{lemma}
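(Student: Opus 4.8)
The plan is to compute the relevant divisibility directly and then feed the answer into the density statement of Proposition \ref{prop:densitycorankone}.

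First I would record the elementary computation of the divisibility. Using $\N(X)\simeq\NS(X)\oplus U$ together with the shape $(r,D,s)\cdot(r',D',s')=D\cdot D'-rs'-sr'$ of the Mukai pairing, for any K3 surface $X$ carrying a primitive embedding $L\hookrightarrow\NS(X)$ one has
$$\dv_{\N(X)}\big((r,i(e),s)\big)=\gcd\big(r,\ s,\ \dv_{\NS(X)}(i(e))\big),$$
the three entries of the $\gcd$ coming from pairing $(r,i(e),s)$ with $(0,0,1)$, with $(1,0,0)$, and with the classes in $\NS(X)$. Write $e=c\,e_0$ with $e_0$ primitive in $\klat$, so that $c=\dv_{\klat}(e)$ is the content of $e$; since $L\subseteq\klat$ is primitive we get $e_0\in L$, and for any point of $\calF_{L^\perp}$ the class $i(e_0)$ is primitive in $\NS(X)$, so $c$ is also the content of $i(e)$ there. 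At the very general point of $\calF_{L^\perp}$ one has $\NS(X)=i(L)$ by Remark \ref{rem:perioddomainknowsaboutthelattice}, so the divisibility there equals $d_0:=\gcd(r,s,\dv_L(e))$; if $d_0=1$ --- in particular if $e=0$, in which case primitivity of $v$ forces $\gcd(r,s)=1$ --- then the set in \eqref{eq:divisibility} already contains a dense open subset and there is nothing to prove. So assume $d_0>1$; then $e\neq 0$.

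The crucial point is now purely arithmetic: primitivity of $v$ in $\N(S)\simeq\NS(S)\oplus U$ is equivalent to $\gcd(r,s,c)=1$, whence $\gcd(d_0,c)=1$. Therefore, for \emph{any} even lattice $L'$ fitting into a chain of primitive embeddings $L\hookrightarrow L'\hookrightarrow\klat$ and containing a vector pairing to $1$ with $e_0$, we have $\dv_{L'}(e)=c$ (it divides $c$ and it is $\geq\dv_{\klat}(e)=c$); consequently, for \emph{every} K3 surface $X$ occurring in $\calF_{(L')^\perp}$ --- for which $L'\subseteq\NS(X)$ and $\dv_{H^2(X,\Z)}(i(e))=c$ by unimodularity of $H^2(X,\Z)$ --- one finds $\dv_{\N(X)}\big((r,i(e),s)\big)=\gcd(r,s,c)=1$. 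In other words, \emph{every} point of the divisor $\calF_{(L')^\perp}\subseteq\calF_{L^\perp}$ lies in the set we have to show is dense. (Any such $L'$ automatically has signature $(1,\rho)$ and contains $L$ primitively, since a vector pairing to $1$ with $e_0$ cannot lie in $L$ when $d_0>1$.)

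It remains to exhibit \emph{infinitely many pairwise non-isometric} lattices $L_n$ of this kind: then Proposition \ref{prop:densitycorankone} shows that $\bigcup_n\calF_{L_n^\perp}$ is dense in $\calF_{L^\perp}$, and by the previous paragraph this union is contained in the set in \eqref{eq:divisibility}, which proves the lemma. To build the $L_n$ I would fix $y\in\klat$ with $y\cdot e_0=1$ --- possible since $\klat$ is unimodular and $e_0$ is primitive --- observe that $\langle L,y\rangle^\perp\subseteq\klat$ has rank $21-\rho\geq 12$ and indefinite signature as $\rho\leq 9$, and for infinitely many $n$ adjoin to $L$ a vector $x_n\in\klat$ with $x_n\cdot e_0=1$ and $x_n^2=2n$; setting $L_n$ to be the saturation of $\langle L,x_n\rangle$ in $\klat$ yields overlattices of signature $(1,\rho)$ with $\dv_{L_n}(e)=c$. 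The hard part is precisely this lattice-theoretic construction: realising the $x_n$ inside $\klat$ with the prescribed square while controlling their pairings with $L$, checking that the saturations $L_n$ still have signature $(1,\rho)$ and admit $L$ as a primitive sublattice, and --- the genuinely delicate point --- ensuring that infinitely many of the $L_n$ are pairwise non-isometric, for which one wants $|\disc L_n|$ to take infinitely many values and therefore needs to control the index $[L_n:\langle L,x_n\rangle]$. Everything else is a formal consequence of the divisibility formula above, primitivity of $v$, Remark \ref{rem:perioddomainknowsaboutthelattice}, and Proposition \ref{prop:densitycorankone}.
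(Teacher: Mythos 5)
Your reduction is correct and is essentially the one the paper uses: the divisibility formula $\dv_{\N(X)}(i(v))=\gcd\bigl(r,s,\dv_{\NS(X)}(i(e))\bigr)$, the observation that primitivity of $v$ forces $\gcd(r,s,c)=1$ where $c$ is the content of $e$, and the conclusion that \emph{every} point of $\calF_{(L')^\perp}$ satisfies \eqref{eq:divisibility} as soon as $L'$ contains a vector pairing to $1$ with $e_0$, all match the paper's argument, and both proofs then feed a corank-one family into Proposition \ref{prop:densitycorankone}. The problem is that you stop exactly where the substantive work begins: you never actually construct infinitely many pairwise non-isometric lattices $L_n$ of the required kind, and you explicitly flag the three issues (realising the $x_n$, signature and primitivity of $L_n$, pairwise non-isometry) as ``the hard part'' without resolving them. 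This is a genuine gap rather than a routine verification. In particular, defining $L_n$ as the \emph{saturation} of $\langle L,x_n\rangle$ makes $\disc L_n$ depend on the uncontrolled index $[L_n:\langle L,x_n\rangle]$ --- precisely the obstacle you name --- and your parenthetical claim that any such $L'$ ``automatically has signature $(1,\rho)$'' is unjustified: a rank-$(\rho+1)$ overlattice of $L$ inside $\klat$ could a priori have signature $(2,\rho-1)$.

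The paper resolves all three points with one device you are missing. Since $\rk L\leq 10$, the primitive embedding $L\hookrightarrow\klat$ is unique up to $O(\klat)$ and can be arranged to factor through the unimodular sublattice $N=U\oplus E_8(-1)\subset\klat$. One then picks $u\in N$ with $u\cdot e_0=1$ (using unimodularity of $N$, not of $\klat$) and sets $u_n\coloneqq u+d_n$ for $d_n\in E_8(-1)\subset N^\perp$ primitive with $d_n^2=-2n$. Because $d_n$ is orthogonal to all of $N\supseteq L\cup\{u\}$, the Gram matrix of $L_n\coloneqq\langle L,u_n\rangle$ differs from that of $\langle L,u\rangle$ only in the single diagonal entry $u_n^2=u^2+d_n^2$. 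Consequently $L_n$ is already primitive in $\klat$ (no saturation is needed, which kills the index problem), its signature is $(1,\rho)$ for $n\gg0$ since the orthogonal projection of $u_n$ away from $L\otimes\bbR$ has square $(u^2-(\mathrm{proj}_L u)^2)-2n<0$, and $\disc L_n=(\disc L)\cdot d_n^2+\mathrm{const}$ is a non-constant affine function of $d_n^2$, so the $L_n$ are pairwise non-isometric. Each $L_n$ contains $u_n$ with $u_n\cdot e_0=1$, so your divisibility computation applies to every point of $\calF_{L_n^\perp}$. Without this construction (or an equivalent one) your proof is incomplete.
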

\begin{proof}
    Let $e=me_0$ with $e_0$ primitive (in particular, $\gcd(r,m,s)=1$ since $v$ is primitive). We claim that there exists a vector $u\in \klat$ such that $u\cdot e_0 = 1$ and the lattice $\langle h,e,u\rangle$ has signature $(1,2)$. Indeed, let $N=U\oplus E_8$. Fix any primitive embedding $L\hookrightarrow N$, which is possible by \cite[Corollary 1.12.3]{Nik80}. Since $\rk(L)\leq 10$, the primitive embedding $L\hookrightarrow \klat$ is unique
    up to the action of $O(\klat)$. Therefore, we may assume that $L$ is a sublattice of $N\subset \klat$. Since $N$ is unimodular and $e_0$ is primitive, there exists a vector $u\in N$ such that $e_0\cdot u = 1$. 
    Now, for any $d\in E_8\subset N^\perp\subset \klat$, write $u_d=  u+d$, and let $L_d=\langle L,u_d\rangle\subset \klat$. If $d$ is primitive, then $L_d\subset \klat$ is a primitive sublattice. Moreover, by computing the discriminant of $L_d$, we see that $L_d$ is not isometric to $L_{d'}$ whenever $d^2\neq (d')^2$. Now, for any $n\in \mathbb{N}$, let $d_n\in E_8$ be a primitive vector with $d_n^2 = 2n$. Then $\left\{L_n\right\}_{n\in \mathbb{N}}$ is a set of pairwise non-isometric lattices satisfying the assumptions of Proposition \ref{prop:densitycorankone}, and therefore $\cup_n\calF_{L_n^\perp}$ is dense in $\calF_{L^\perp}$. Note that for any $n\in \mathbb{N}$ and any point $[(X,i)]\in \calF_{L_n^\perp}$, we have $\dv_{\N(X)}(i(v)) = 1$, as required.
\end{proof}

We now provide a proof for the following proposition, previously noted by Mukai \cite{Muk87} and C\u ald\u araru \cite{Cal00}.
\begin{proposition}\label{prop:deformuntwistedmoduli}
    Let $S$ be a K3 surface, let $v = (r,E,s)\in \N(S)$ be a primitive Mukai vector and let $H$ be a $v$-generic polarisation. Then there exists a proper, smooth morphism $\calS\to T$ of schemes of finite type over $\CC$ with connected fibres, such that for all $i\in \Z$, and all $t\in T$, we have an identification 
    \begin{equation} \label{eq:identificationcohomologyfamily}
    H^i(\calS_t,\Z)\simeq H^i(\calS,\Z).
    \end{equation}
    and such that:
    \begin{itemize}
        \item[i)] There exists a point $0\in T$ such that $\calS_0 \simeq S$.
        \item [ii)] The vector $v_t\in \tilde{H}(\calS_t,\Z)$ corresponding to $v$ along \eqref{eq:identificationcohomologyfamily} is contained in $\N(\calS_t)$ for all $t\in T$.
        \item [iii)] For all $t\in T$, $H_t\in H^2(\calS_t,\Z)$ is a $v$-generic polarisation.
        \item [iv)] There exists a point $1\in T$ such that $\dv_{\N(\calS_1)}(v)=1.$ 
    \end{itemize}
    Moreover, there is a smooth, proper family $\calM\to T$ with the following properties:
    \begin{itemize}
        \item [v)] For all $t\in T$, there exists an isomorphism $\calM_t\simeq M_{H_t}(v_t)$.
        \item [vi)] There is a Brauer class $\alpha\in \Br(\calM)$ and a $(1\boxtimes\alpha)$-twisted sheaf $\calE$ on $\calS\times_T\calM$ which restricts to a twisted universal sheaf on each fibre $\calS_t\times\calM_t$.
    \end{itemize}
\end{proposition}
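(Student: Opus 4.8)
The plan is to realise $S$ as one member of a connected family of K3 surfaces all carrying a lattice polarisation by the saturation $L$ of $\langle H,E\rangle$, arranged so that some other member has divisibility one, and then to extract the relative moduli space and its twisted universal sheaf from the machinery already set up in Proposition~\ref{prop:relativetwistedmodulispaces} and Definition/Proposition~\ref{defprop:universaltwistedsheaves}. Concretely, fix a marking $\phi\colon H^2(S,\Z)\simeq\klat$, let $L\subset\klat$ be the saturation of the sublattice generated by $\phi(H)$ and $\phi(E)$, and put $T_0=L^\perp$. Since $\langle H,E\rangle$ has rank at most $2$ we have $\rk(L)\leq 2$, so $L$ has signature $(1,\rk(L)-1)$ and all the numerical hypotheses needed to invoke the results of Section~2.6 — in particular Lemma~\ref{lem:divisibilityinfamilies} and Proposition~\ref{prop:densitycorankone} — are satisfied; the pair $(S,\phi^{-1}|_L)$ defines a point of the moduli space $\calF_{L^\perp}$ of $L$-polarised K3 surfaces of \cite{Dol95}.

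\emph{Construction of $\calS\to T$.} Passing from $\calF_{L^\perp}$ to a suitable cover (equivalently, working inside the moduli space of marked K3 surfaces) one obtains a genuine universal family; there the cohomology local systems $R^i\pi_*\Z$ are coherently trivialised, which is what yields the identification \eqref{eq:identificationcohomologyfamily}, and the monodromy of any subfamily preserves the constant sublattice $L$, hence fixes $v=(r,E,s)$ and $H$. By Lemma~\ref{lem:divisibilityinfamilies} the locus where $\dv_{\N}(v)=1$ is dense, so it meets the connected component of the (marked, $L$-polarised) moduli containing $(S,\phi)$; fix a point of that component representing a K3 surface $S_1$ with $\dv_{\N(S_1)}(v)=1$. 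Now let $T$ be a connected, finite-type base mapping to this component and containing points over $S$ and over $S_1$ — for instance a curve obtained by cutting the component down by hyperplanes — chosen so that it avoids the countably many proper closed subsets along which the (flat, monodromy-invariant) class $H_t$ fails to be ample or fails to be $v$-generic; this is possible because $S$ and $S_1$ themselves lie off those subsets, and on the open part of $T$ where $\NS(\calS_t)=L$ the $v$-genericity of $H$ on $S$ already forces $v$-genericity of $H_t$. Setting $\calS\to T$ to be the restricted universal family, we obtain a point $0\in T$ with $\calS_0\simeq S$, have $v_t\in\N(\calS_t)$ and $H_t$ a $v$-generic polarisation for all $t$, and a point $1\in T$ with $\dv_{\N(\calS_1)}(v)=1$; this gives items i)–iv).

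\emph{Construction of $\calM\to T$, $\alpha$ and $\calE$.} After possibly enlarging $T$ by a finite cover, choose a relatively ample line bundle on $\calS/T$ restricting to $H_t$ on each fibre, and let $P$ be the Hilbert polynomial of a sheaf with Mukai vector $v$ with respect to it — constant along $T$ because $v$ and $H$ are. Proposition~\ref{prop:relativetwistedmodulispaces} then produces a relative moduli space $\calM\coloneqq\calM_{\calS/T}(P)\to T$ with $\calM_t\simeq M_{H_t}(v_t)$ for all $t$; by Theorem~\ref{thm: moduli sheaves K3} each fibre is a smooth projective hyperkähler manifold, and flatness of the family over $T$ shows $\calM\to T$ is smooth and proper, which is v). Finally, Definition/Proposition~\ref{defprop:universaltwistedsheaves} applied to $\calS\to T$ and $\calM\to T$ supplies a unique Brauer class $\alpha\in\Br(\calM)$ together with a $(1\boxtimes\alpha)$-twisted sheaf $\calE$ on $\calS\times_T\calM$ restricting to a $(1\boxtimes\alpha_t)$-twisted universal sheaf on each $\calS_t\times\calM_t$, which is exactly vi).

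The main obstacle is the first step: one must produce a base that is simultaneously of finite type over $\CC$, connected, rich enough to reach a divisibility-one K3 surface, and over which the cohomology of the fibres can be identified coherently rather than merely abstractly — the obvious candidates (the full period domain, or an arithmetic quotient of it) fail one of these requirements, having either infinite monodromy or a fundamental group that obstructs trivialising $R^2\pi_*\Z$. Working in the lattice-polarised picture is what reconciles them: the monodromy is then forced to fix the polarisation sublattice $L$, and since the data $v$ and $H$ that all later arguments depend on live in $L$, this is all the rigidity that is needed, while the density statement of Lemma~\ref{lem:divisibilityinfamilies} guarantees that a divisibility-one fibre is reachable inside a single connected component, whose hyperplane sections provide the required finite-type $T$.
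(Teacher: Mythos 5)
Your overall strategy is the paper's: pass to the moduli space $\calF_{L^\perp}$ of K3 surfaces polarised by the saturation $L$ of $\langle H,E\rangle$, use Lemma~\ref{lem:divisibilityinfamilies} to reach a divisibility-one fibre, and then quote Proposition~\ref{prop:relativetwistedmodulispaces} and Definition/Proposition~\ref{defprop:universaltwistedsheaves} for $\calM\to T$, $\alpha$ and $\calE$. The last two steps are fine. The gap is in your construction of the base $T$.

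You use Lemma~\ref{lem:divisibilityinfamilies} only to locate \emph{some} point $S_1$ with $\dv_{\N(S_1)}(v)=1$ in the component, and then take $T$ to be a curve through $0$ and $1$. On such a curve the local system $R^2\pi_*\Z$ has monodromy contained in $\toplus(L^\perp)$: it fixes $L$ pointwise, but it will in general act on $L^\perp$ through an infinite group, so the identification \eqref{eq:identificationcohomologyfamily} fails for $i=2$, and no finite-type cover of $T$ can repair this. Your remark that fixing $v$ and $H$ ``is all the rigidity that is needed'' amounts to proving a weaker statement than the one asserted: the full trivialisation of $H^2$ is genuinely used downstream, both in the hypothesis $H^i(X,\Z)\simeq H^i(X_t,\Z)$ of Theorem~\ref{thm:CaldaIdentificationBrauerClassFamily} and in the proof of Theorem~\ref{thm:explicitobstruction}, where one transports the class $[\varphi_t(u)]_2$ (which lives in the transcendental direction, not in $L$) between the fibres over $0$ and $1$. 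The fix is to use the full strength of Lemma~\ref{lem:divisibilityinfamilies}: the divisibility-one locus is \emph{dense} in $\calF_{L^\perp}$, so one may take $T$ to be an arbitrarily small contractible neighbourhood of $0$ (shrunk further so that $H_t$ stays ample and $v_t$-generic, which are open conditions); contractibility trivialises all the local systems at once, and density guarantees that this small $T$ already contains the required point $1$. Choosing a distant $S_1$ first and then connecting it to $S$ discards exactly the feature of the density statement that makes the proposition work.
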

\begin{proof}
   Choose any marking $\phi: H^2(S,\Z)\simeq \klat$ and let $h = \phi(H)$, $e = \phi(E)$, and let $L$ be the saturation of $\langle h,e\rangle$ in $\klat$. Then $(S, \phi^{-1}|_{L})$ is an $L$-marked K3 surface, hence it represents a point $0\in \calF_{L^\perp}$. Let $0\in T\subset \calF_{L^\perp}$ be a small open neighbourhood, and let $\calS\to T$ be the tautological family of K3 surfaces. By shrinking $T$, for example until it is contractible, we may assume that we have the identifications \eqref{eq:identificationcohomologyfamily}. Moreover, by shrinking $T$ further, we may assume that $H_t$ is ample, and that $H_t$ is $v_t$-generic for all $t\in T$ \cite{Bri08}. Note that $H_t$ and $v_t$ are automatically algebraic, since we have $T\subset \calF_{L^\perp}$. The existence of the relative moduli space $\calM\to T$ and the relative twisted universal sheaf follows from Proposition \ref{prop:relativetwistedmodulispaces} and Definition/Proposition \ref{defprop:universaltwistedsheaves}. 
   Part \textit{iv)} follows from Lemma \ref{lem:divisibilityinfamilies}.
\end{proof}

\section{Identifying the obstruction}
In this section, we compute the obstruction Brauer class for a higher-dimensional moduli space of sheaves on a K3 surface in terms of the Mukai vector. One of the main points of interest is that we compute the order of the obstruction class precisely. 

\subsection{Brauer groups of moduli spaces}
In this section, let $S$ be a K3 surface with primitive Mukai vector $v\in \N(S)$. Assume $v^2 >0$. Let $H\in \NS(S)$ be a $v$-generic polarisation, and write $M\coloneqq M_H(v)$. The main goal of this subsection is to show that there is an exact sequence resembling \eqref{eq:sesbrauerfork3} in this setting. We note that, since the Mukai morphism induces a Hodge isometry $v^\perp\simeq H^2(M,\Z),$ we have $T(S)\simeq T(M)$, hence there is no short exact sequence of the form \eqref{eq:sestranscendentalfork3s}. Instead, the correct first step is to recreate short exact sequence \eqref{eq:sestranscendentalduals} using the group $$T'(M)\coloneqq \frac{H^2(M,\Z)}{\NS(M)}$$ from \eqref{eq:Tprime}.

\begin{lemma}\label{lem:sestranscendental}
    Let $S$, $v$, $H$, and $M$ be as above. Then there is a short exact sequence 
    \begin{equation}
        0\to T'(M)\to T(S)^*\to \Z/\operatorname{div}(v)\Z\to 0.
    \end{equation}
    Let $u\in \tilde{H}(S,\Z)$ such that $u\cdot v \equiv 1\pmod{\dv(v)}$, then the image of $u$ under the projection $H^2(S,\Z)\to T(S)^*$ is a generator for the cokernel.
\end{lemma}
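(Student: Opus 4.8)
\emph{Proof plan.} The plan is to transport the whole statement into the unimodular Mukai lattice $\widetilde H(S,\Z)$ via the Mukai morphism, where it reduces to elementary manipulations with a primitive sublattice of a unimodular lattice.

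First I would invoke Theorem \ref{thm:mukaimorphism}: the Mukai morphism is a Hodge isometry $v^\perp\simeq H^2(M,\Z)$, the orthogonal complement being taken inside $\widetilde H(S,\Z)$. Since $v\in\N(S)=\widetilde H^{1,1}(S,\Z)$, the $(2,0)$-part $H^{2,0}(S)$ of $\widetilde H(S,\Z)$ is orthogonal to $v$, so the $(1,1)$-part of $v^\perp$ is $v^\perp\cap\N(S)$ and the transcendental part of $v^\perp$ is $T(S)$ itself (recall $T(S)=\N(S)^\perp$ in $\widetilde H(S,\Z)$, and $T(S)$ stays primitive and minimal in the sublattice $v^\perp$). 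Transporting these along the Hodge isometry gives $\NS(M)=v^\perp\cap\N(S)$ and $T(M)=T(S)$, and under these identifications the injection \eqref{eq:morphismtranscendentals} becomes
\[
T'(M)=\frac{v^\perp}{v^\perp\cap\N(S)}\hookrightarrow T(S)^*,\qquad [\xi]\longmapsto (\xi\cdot-)|_{T(S)}\quad(\xi\in v^\perp),
\]
well defined because $\N(S)\perp T(S)$ in $\widetilde H(S,\Z)$.

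Next I would compute the cokernel. As $\widetilde H(S,\Z)$ is unimodular and $T(S)$ is primitive, the restriction map $r\colon\widetilde H(S,\Z)\simeq\widetilde H(S,\Z)^*\twoheadrightarrow T(S)^*$ is surjective with kernel $T(S)^\perp=\N(S)$; hence it induces $\widetilde H(S,\Z)/\N(S)\simeq T(S)^*$ carrying $T'(M)$ onto $(v^\perp+\N(S))/\N(S)$, so that $T(S)^*/T'(M)\simeq\widetilde H(S,\Z)/(v^\perp+\N(S))$. Finally I would identify the right-hand side with $\Z/\dv(v)\Z$: the homomorphism $\widetilde H(S,\Z)\to\Z$, $\xi\mapsto\xi\cdot v$, is surjective with kernel $v^\perp$ by unimodularity and primitivity of $v$, and after reduction modulo $\dv(v)$ — the divisibility computed in $\N(S)$, as in Definition \ref{def:caldararuclass} — its kernel becomes $v^\perp+\N(S)$: it visibly contains $v^\perp$, it contains $\N(S)$ since $\{\xi\cdot v:\xi\in\N(S)\}=\dv(v)\Z$, and conversely, fixing $\eta\in\N(S)$ with $\eta\cdot v=\dv(v)$, any $\xi$ with $\xi\cdot v=k\dv(v)$ satisfies $\xi-k\eta\in v^\perp$. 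Combining the three isomorphisms produces the short exact sequence, and it exhibits the composite $\widetilde H(S,\Z)\xrightarrow{\,r\,}T(S)^*\twoheadrightarrow T(S)^*/T'(M)\simeq\Z/\dv(v)\Z$ as $\xi\mapsto\overline{\xi\cdot v}$; hence the image $r(u)$ of any $u$ with $u\cdot v\equiv1\pmod{\dv(v)}$ generates the cokernel.

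I do not expect a genuine obstacle; the only points needing care are the Hodge-theoretic bookkeeping in the first step — that $T(M)$ is literally $T(S)$ and $\NS(M)$ is exactly $v^\perp\cap\N(S)$, neither larger nor smaller — and keeping the divisibility in the statement firmly attached to $\N(S)$ rather than to $\widetilde H(S,\Z)$, in which $v$ has divisibility $1$.
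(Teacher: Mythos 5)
Your proposal is correct and follows essentially the same route as the paper: both transport $T'(M)$ into $T(S)^*\simeq\tilde{H}(S,\Z)/\N(S)$ via the Mukai morphism and then identify the cokernel with $\Z/\dv(v)\Z$ using the pairing $\xi\mapsto\xi\cdot v$ together with the fact that the divisibility of $v$ in $\N(S)$ is $\dv(v)$ while its divisibility in the unimodular lattice $\tilde{H}(S,\Z)$ is $1$. Your quotient-lattice packaging of the cokernel computation is just a tidier rewriting of the paper's element-wise argument, and your care about $T(M)=T(S)$ and $\NS(M)=v^\perp\cap\N(S)$ is exactly the bookkeeping the paper leaves implicit.
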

\begin{proof}
    The Mukai morphism is a Hodge isometry $H^2(M,\Z)\simeq v^\perp \subset \tilde{H}(S,\Z)$ by Theorem \ref{thm:mukaimorphism}. This induces an embedding of Hodge structures 
    \begin{equation}\label{eq:transcendentalembedding}
        T'(M) \simeq \frac{v^\perp}{(v^\perp)^{1,1}}\hookrightarrow \frac{\tilde{H}(S,\Z)}{\operatorname{N}(S)}\simeq T(S)^*.
    \end{equation}
    It follows from a straightforward computation that the cokernel of this embedding is generated by $u$: for any class $x\in\tilde{H}(S,\Z)$ such that $x\cdot v=\lambda$, we have $(x-\lambda u)\cdot v\equiv0\pmod{\dv(v)}$. Therefore, there exists an algebraic class $y\in \N(S)$ such that $y\cdot v = (x-\lambda u)\cdot v$. Therefore, we have $x-\lambda u - y \in v^\perp$, hence $x$ and $\lambda u$ induce the same element in $T(S)^*/T'(M)$. Since $\operatorname{div}(v)u-z\in v^\perp$ for any $z\in \N(S)$ with $v\cdot z=\operatorname{div}(v)$, we obtain that $u$ has order $\operatorname{div}(v)$ in $T(S)^*/T'(M)$.
\end{proof}
Let $w\in T'(M)$ be the vector which maps to $\dv(v)\cdot u\in T(S)^*$. Using the description of $\Br(M)$ as $\Br(M)\simeq T'(M)\otimes \Q/\Z$ from \eqref{eq:brauergroup}, Lemma \ref{lem:sestranscendental} immediately implies the following.

\begin{proposition} \label{prop:sesbrauerkn}
    There is a short exact sequence $$0\to \left\langle \frac{w}{\dv(v)}\right\rangle \to \Br(M)\to \Br(S)\to 0.$$
\end{proposition}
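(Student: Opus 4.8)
The plan is to deduce the proposition directly from Lemma \ref{lem:sestranscendental} by tensoring its short exact sequence with $\Q/\Z$. Since $M$ is of $\mathrm{K3}^{[n]}$-type we have $H^3(M,\Z)=0$, so \eqref{eq:brauergroup} provides a canonical isomorphism $\Br(M)\simeq T'(M)\otimes\Q/\Z$, while Lemma \ref{lem:brauerofak3} gives $\Br(S)\simeq T(S)^*\otimes\Q/\Z$. Under these identifications I will show that the sequence asserted in the proposition is exactly the one induced by the embedding $\iota\colon T'(M)\hookrightarrow T(S)^*$ of Lemma \ref{lem:sestranscendental}, and then identify the kernel.

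Concretely, set $V\coloneqq T(S)^*\otimes\Q$. By Lemma \ref{lem:sestranscendental}, $\iota$ exhibits $T'(M)$ and $T(S)^*$ as two full-rank lattices in $V$ with $T'(M)\subseteq T(S)^*$ of finite index $\dv(v)$, the quotient $T(S)^*/T'(M)$ being cyclic and generated by the image of $u$. There are canonical isomorphisms $T'(M)\otimes\Q/\Z\simeq V/T'(M)$ and $T(S)^*\otimes\Q/\Z\simeq V/T(S)^*$, under which $\iota\otimes\mathrm{id}_{\Q/\Z}$ becomes the natural projection $V/T'(M)\twoheadrightarrow V/T(S)^*$. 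This projection is surjective, and its kernel is $T(S)^*/T'(M)$, regarded inside $V/T'(M)$; so after the identifications above we obtain an exact sequence $0\to T(S)^*/T'(M)\to\Br(M)\to\Br(S)\to 0$. It then remains to see that this kernel equals $\langle w/\dv(v)\rangle$: since $w\in T'(M)$ is by definition the element with $\iota(w)=\dv(v)\,u$, in $V$ we have $u=w/\dv(v)$, so a generator of $T(S)^*/T'(M)$ is the class of $w/\dv(v)$ in $V/T'(M)\simeq T'(M)\otimes\Q/\Z\simeq\Br(M)$. This is precisely the sequence in the statement. (Equivalently, one may tensor the sequence of Lemma \ref{lem:sestranscendental} with $\Q/\Z$ directly: as $T'(M)$ and $T(S)^*$ are free and $\Z/\dv(v)\Z\otimes\Q/\Z=0$, the $\Tor$-long exact sequence collapses to $0\to\Tor_1^\Z(\Z/\dv(v)\Z,\Q/\Z)\to\Br(M)\to\Br(S)\to 0$ with $\Tor_1^\Z(\Z/\dv(v)\Z,\Q/\Z)\simeq\Z/\dv(v)\Z$, and one checks as above that the generator is $w/\dv(v)$.)

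The one place that needs genuine care — and really the only content beyond Lemma \ref{lem:sestranscendental} — is that $-\otimes\Q/\Z$ is not right exact: the cokernel $\Z/\dv(v)\Z$ of Lemma \ref{lem:sestranscendental} does not vanish but instead reappears as the kernel of $\Br(M)\to\Br(S)$, and one must pin down its generator as $w/\dv(v)$ rather than, say, $u$ or $\dv(v)\,w$, which is exactly what the relation $\iota(w)=\dv(v)u$ together with the last sentence of Lemma \ref{lem:sestranscendental} records. No deformation-theoretic or geometric input is needed at this stage; the geometry has already been absorbed into Lemma \ref{lem:sestranscendental} via the Mukai morphism of Theorem \ref{thm:mukaimorphism}.
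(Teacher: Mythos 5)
Your proof is correct and follows the same route as the paper, which simply tensors the short exact sequence of Lemma \ref{lem:sestranscendental} with $\Q/\Z$ under the identifications $\Br(M)\simeq T'(M)\otimes\Q/\Z$ and $\Br(S)\simeq T(S)^*\otimes\Q/\Z$. The paper states this as an immediate consequence; your write-up just makes explicit the standard bookkeeping (the lattice-in-$V$ picture, the reappearance of the cokernel as $\Tor_1$, and the identification of the generator $u=w/\dv(v)$), all of which is accurate.
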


We will see in the next section that $\frac{w}{\dv(v)}$ is precisely the Brauer class of $M$ that obstructs the existence of a universal sheaf on $S\times M$.

The following lemma gives us another way to view the obstruction class in terms of the Mukai morphism.

\begin{lemma}\label{lem:mukaimorphismcharacterisationoftheobstruction}
    Let $\varphi: \tilde{H}(S,\Q) \to H^*(M,\Q)$ be the Mukai morphism from Definition \ref{def:mukaimorphism}. Consider the vector $[\varphi(u)]_2\in H^2(M,\Q)$, where $u\in \tilde{H}(S,\Z)$ is as in Lemma \ref{lem:sestranscendental}. Then the Brauer class on $M$ induced by $[\varphi(u)]_2$ along the composition $$H^2(M,\Q)\to T'(M)\otimes \Q\to T'(M)\otimes \Q/\Z\simeq \Br(M)$$ is precisely $\frac{w}{\dv(v)}$. Moreover, this is independent of the choice of a quasi-universal sheaf on $S\times M$ and the choice of $u$.
\end{lemma}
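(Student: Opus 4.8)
The plan is to push everything through the chain of isometries used in the proof of Lemma~\ref{lem:sestranscendental}, keeping track only of the degree-$2$ component of the Mukai morphism. Recall that $\varphi$ restricts to a Hodge isometry $v^\perp\xrightarrow{\sim}H^2(M,\Z)$, $x\mapsto[\varphi(x)]_2$ (Theorem~\ref{thm:mukaimorphism}), and that \eqref{eq:transcendentalembedding} realises $T'(M)=H^2(M,\Z)/\NS(M)$ as the image of $v^\perp$ inside $T(S)^*=\tilde H(S,\Z)/\N(S)$, with cokernel $\Z/\dv(v)\Z$ generated by the class of $u$. First I would choose a representative $\tilde w\in v^\perp$ of $w$ under the isomorphism $T'(M)\simeq v^\perp/(v^\perp\cap\N(S))$ induced by $\varphi$, so that $[\varphi(\tilde w)]_2\in H^2(M,\Z)$ represents $w\in T'(M)$. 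Since $w$ maps to $\dv(v)\,u$ in $T(S)^*$, this representative satisfies $\tilde w-\dv(v)\,u\in\N(S)$; set $y\coloneqq\tilde w-\dv(v)\,u\in\N(S)$.

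The one ingredient not already contained in Theorem~\ref{thm:mukaimorphism} — which controls $\varphi$ only on $v^\perp$ — is that $\varphi$ sends algebraic classes on $S$ to algebraic classes on $M$. This is immediate from the formula $\varphi(x)=\tfrac1\rho\,p_*\bigl(q^*x\otimes v(\calU^\vee)\bigr)$: the quasi-universal sheaf $\calU$ is \emph{untwisted}, so $v(\calU^\vee)$ is algebraic on $S\times M$, and flat pullback, cup product and proper pushforward preserve algebraicity. In particular $[\varphi(y)]_2\in\NS(M)_\Q$ because $y\in\N(S)$. Now in $H^2(M,\Q)$ one has $[\varphi(u)]_2=\tfrac1{\dv(v)}\bigl([\varphi(\tilde w)]_2-[\varphi(y)]_2\bigr)$ by linearity of $\varphi$, so the image of $[\varphi(u)]_2$ in $T'(M)\otimes\Q=H^2(M,\Q)/\NS(M)_\Q$ equals $\tfrac1{\dv(v)}$ times the class of $[\varphi(\tilde w)]_2$, that is, $\tfrac{w}{\dv(v)}$. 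Projecting further to $T'(M)\otimes\Q/\Z\simeq\Br(M)$ gives the asserted equality.

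It remains to verify the two independence claims. For the quasi-universal sheaf it suffices to treat $\calU'=\calU\otimes p^*F$ for a vector bundle $F$ on $M$, since any two quasi-universal sheaves become isomorphic after tensoring by bundles pulled back from $M$; by the transformation rule recorded just before Lemma~\ref{lem:mukaipairingmoduli} and the formula $[\varphi(u)]_0=-u\cdot v\in\Z$ (Lemma~\ref{lem:mukaipairingmoduli}), we get $[\varphi'(u)]_2-[\varphi(u)]_2=-\tfrac{u\cdot v}{\rk F}\,c_1(F)\in\NS(M)_\Q$, so the induced Brauer classes coincide. For the choice of $u$: if $u'\in\tilde H(S,\Z)$ also satisfies $u'\cdot v\equiv1\pmod{\dv(v)}$, choose $z\in\N(S)$ with $z\cdot v=\dv(v)$ (possible since $\dv(v)=\gcd_{n\in\N(S)}n\cdot v$) and write $u-u'=kz+\eta$ with $k\in\Z$ and $\eta\in v^\perp$; then $[\varphi(\eta)]_2\in H^2(M,\Z)$ by Theorem~\ref{thm:mukaimorphism} while $[\varphi(kz)]_2\in\NS(M)_\Q$ by the algebraicity observation, so $[\varphi(u)]_2-[\varphi(u')]_2$ is a sum of an integral class and an $\NS(M)_\Q$-class and hence vanishes in $\Br(M)=T'(M)\otimes\Q/\Z$; in particular this also re-proves that $\tfrac{w}{\dv(v)}$ is independent of $u$.

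I expect the only genuinely delicate step to be the bookkeeping in the first paragraph, namely matching the abstractly defined $w$ with a concrete representative $\tilde w\in v^\perp$ and thereby writing $\tilde w=\dv(v)\,u+y$ with $y\in\N(S)$; once this is in place, the algebraicity of $\varphi$ is exactly what is needed to discard the correction term $[\varphi(y)]_2$, and everything else is formal.
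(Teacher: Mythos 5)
Your argument is correct and follows essentially the same route as the paper: the two independence verifications (varying the quasi-universal sheaf via the transformation rule plus $[\varphi(u)]_0=-u\cdot v$, and varying $u$ by splitting the difference into an algebraic part and a part in $v^\perp$) are exactly the paper's, and your first paragraph simply carries out explicitly the identification $[\varphi(u)]_2\mapsto \frac{w}{\dv(v)}$ that the paper dismisses as ``a straightforward chase through the identifications''. The decomposition $\tilde w=\dv(v)\,u+y$ with $y\in\N(S)$ and the observation that $\varphi$ preserves algebraicity are precisely the right bookkeeping, so nothing is missing.
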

\begin{proof}
    The fact that $[\varphi(u)]_2$ induces the Brauer class $\frac{w}{\dv(v)}$ is a straightforward chase through the identifications.
    We check that the Brauer class $[\varphi(u)]_2$ is independent of our choice of quasi-universal sheaf $\calU$. Recall that if $\calU'$ is any other quasi-universal sheaf on $S\times M$, then there exist vector bundles $E$ and $F$ on $M$ such that $\calU\otimes p^*E\simeq \calU'\otimes p^*F$ by \cite[Appendix 2]{Muk87}.
    For $\calF\in\Coh(M)$ we have $$[\varphi^{\calU^\vee\otimes p^*\calF}(u)]_2 = [\varphi^{\calU^\vee}(u)]_2+[\varphi^{\calU^\vee}(u)]_0\cdot \frac{c_1(\calF)}{\rk\calF}.$$ Since $$\frac{c_1(\calF)}{\rk\calF}\in \NS(M)\otimes \Q,$$ this shows that the class of $[\varphi(u)]_2$ in $\Br(M)\simeq T'(M)\otimes \matQ/\matZ$ does not depend on the choice of $\calU$.

    Now we check that $[\varphi(u)]_2$ is independent of the choice of $u$. Suppose that we have $u'\in \tilde{H}(S,\Z)$ such that $u\cdot v = u'\cdot v \equiv 1\pmod{\dv(v)}$. Then $(u-u')\cdot v\equiv 0\pmod{\dv(v)}$. Therefore, there is an algebraic class $x\in \tilde{H}(S,\Z)$ such that $x\cdot v = (u-u')\cdot v$. This means that $u-u'-x\in v^\perp$. This implies that $[\varphi(u-u'-x)]_2\in H^2(M,\matZ)$ is integral, so in particular it vanishes in $\Br(M)$. Since $\varphi$ is a morphism of Hodge structures, $\varphi(x)$ is algebraic, and we obtain $[\varphi(u)]_2 - [\varphi(u')]_2 = -[\varphi(x)]_2=0\in \Br(M)$. 
\end{proof}

\subsection{Obstruction classes}
\label{sec:identifyingobstructionclass}
The main goal of this section is to prove Theorem \ref{thm:explicitobstruction}, which computes the obstruction Brauer class explicitly when $M$ is a higher-dimensional moduli space of sheaves. Our strategy for proving Theorem \ref{thm:explicitobstruction} is very similar to the proof of \cite[Theorem 1.1]{Cal02}, but some modifications need to be made to the proof in our setting. 
The main idea behind the proof is the following result by C\u ald\u araru.

\begin{theorem}\normalfont{\cite[Theorem 4.1]{Cal02}}\label{thm:CaldaIdentificationBrauerClassFamily}
    Let $f: X\to T$ be a proper, smooth morphism of analytic spaces. Let $\mathcal{E}$ be a locally free $\alpha$-twisted sheaf on $X$. Assume that $\alpha|_{X_0}$ is trivial, so that $\mathcal{E}_0\coloneqq \mathcal{E}|_{X_0}$ can be glued to an untwisted sheaf on $X_0$. Assume that for all $t\in T$, we have $H^i(X,\Z)\simeq H^i(X_t,\Z)$. Then  $$\alpha = -\frac{c_1(\mathcal{E}_0)}{\operatorname{rk}(\mathcal{E}_0)} \in \frac{H^2(X,\Z)}{\NS(X)}\otimes \Q/\Z.$$
\end{theorem}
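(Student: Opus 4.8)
The plan is to prove the statement by a direct \v Cech computation on the total space $X$, using the triviality of $\alpha|_{X_0}$ to "untwist along $X_0$" and thereby identify the relevant integral class with $c_1(\mathcal{E}_0)$, and finally transporting the answer back to $X$ via the given isomorphisms $H^i(X,\Z)\simeq H^i(X_t,\Z)$. First I would fix a sufficiently fine cover $\{U_i\}$ of $X$ by Stein (hence cohomologically trivial) opens, together with a \v Cech $2$-cocycle $\theta=\{\theta_{ijk}\}\in Z^2(\{U_i\},\mathcal{O}_X^\times)$ representing $\alpha$ and a presentation $\mathcal{E}=(\{\mathcal{E}_i\},\{\varphi_{ij}\})$ with $\varphi_{ij}\varphi_{jk}\varphi_{ki}=\theta_{ijk}\cdot\operatorname{id}$ and each $\mathcal{E}_i$ free. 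Since $\alpha|_{X_0}$ is trivial, after refining the cover $\theta|_{X_0}$ is a \v Cech coboundary $\delta\lambda$, $\lambda\in C^1(X_0,\mathcal{O}_{X_0}^\times)$; choosing the $U_{ij}$ small enough that units extend, I may lift $\lambda$ to $\tilde\lambda\in C^1(X,\mathcal{O}_X^\times)$ and replace $\theta$ by $\theta\cdot(\delta\tilde\lambda)^{-1}$ and $\varphi_{ij}$ by $\tilde\lambda_{ij}^{-1}\varphi_{ij}$. Thus I may assume $\theta|_{X_0}=1$, so that $\{\varphi_{ij}|_{X_0}\}$ satisfies the cocycle condition on $X_0$ and glues the $\mathcal{E}_i|_{X_0}$ into the honest sheaf $\mathcal{E}_0$, whose determinant $\det\mathcal{E}_0$ has transition cocycle $\{\det(\varphi_{ij}|_{X_0})\}$.

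Next I would take determinants globally. Writing $r=\operatorname{rk}(\mathcal{E}_0)$, the $1$-cochain $\mu:=\{\det\varphi_{ij}\}$ satisfies $\delta\mu=\theta^{\,r}$. Choosing branches of logarithm on the contractible overlaps, write $\mu=\exp(2\pi i\,L)$ and $\theta=\exp(2\pi i\,\Theta)$; then $\delta L=r\Theta+N$ for some $N\in C^2(\{U_i\},\Z)$. Since $\theta$ is a cocycle, $\delta\Theta\in C^3(\{U_i\},\Z)$ represents the image of $\alpha$ under $\operatorname{Br}(X)\hookrightarrow H^2(X,\mathcal{O}_X^\times)\to H^3(X,\Z)$; this vanishes (the fact that the conclusion is phrased inside $\tfrac{H^2(X,\Z)}{\NS(X)}\otimes\Q/\Z$, together with \eqref{eq:brauergroup} and \eqref{eq:Tprime}, amounts to $\alpha$ lying in the image of $H^2(X,\mathcal{O}_X)$; in all cases of interest $H^3(X,\Z)=0$ anyway), so I may write $\delta\Theta=\delta P$ with $P\in C^2(\{U_i\},\Z)$. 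Then $N+rP\in Z^2(\{U_i\},\Z)$, and from $r(\Theta-P)=\delta L-(N+rP)$ together with $[\delta L]=0$ in $H^2(X,\mathcal{O}_X)$ I get that $\alpha$ is the image of $[\Theta-P]=-\tfrac1r[N+rP]$ under $H^2(X,\mathcal{O}_X)\to H^2(X,\mathcal{O}_X^\times)$; unwinding \eqref{eq:brauergroup} this reads
\[
\alpha=-\frac{[N+rP]}{r}\ \in\ \frac{H^2(X,\Z)}{\NS(X)}\otimes\Q/\Z ,
\]
and the right-hand side is independent of $P$, since changing $P$ by a cocycle alters $\tfrac1r[N+rP]$ by an integral class.

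Finally I would restrict everything to $X_0$. There $\theta|_{X_0}=1$ forces $\Theta|_{X_0}$ to be $\Z$-valued, while $\mu|_{X_0}$ is the transition cocycle of $\det\mathcal{E}_0$, so the connecting homomorphism of the exponential sequence gives $c_1(\mathcal{E}_0)=c_1(\det\mathcal{E}_0)=[\delta(L|_{X_0})]=[\,r\Theta|_{X_0}+N|_{X_0}\,]$ in $H^2(X_0,\Z)$ (with the standard sign convention for $c_1$). Since $\delta\Theta=\delta P$ on $X$, the restriction $P|_{X_0}-\Theta|_{X_0}$ is a $\Z$-valued $2$-cocycle, so $[N+rP]|_{X_0}=c_1(\mathcal{E}_0)+r\,[P|_{X_0}-\Theta|_{X_0}]$ and hence $-\tfrac1r[N+rP]|_{X_0}=-\tfrac{c_1(\mathcal{E}_0)}{r}$ in $H^2(X_0,\Z)\otimes\Q/\Z$. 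Using the identification $H^2(X,\Z)\simeq H^2(X_0,\Z)$ to match the left-hand side with $\alpha$ and projecting onto $\tfrac{H^2(X,\Z)}{\NS(X)}\otimes\Q/\Z$ yields the asserted equality $\alpha=-c_1(\mathcal{E}_0)/r$.

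I expect the main obstacle to be the bookkeeping in the exponential-sequence step: first, making the trivialisation of $\theta|_{X_0}$ extend to a neighbourhood in $X$ (this is why one passes to logarithms, and one may have to shrink $T$); and second, keeping the vanishing of the $H^3$-obstruction and, above all, the sign conventions consistent, so that the final answer comes out as $-c_1(\mathcal{E}_0)/r$ and not its negative. A cleaner but less elementary alternative would run the same computation on the $\bbG_m$-gerbe representing $\alpha$, on which $\mathcal{E}$ is an honest weight-one sheaf and $\det\mathcal{E}$ a weight-$r$ line bundle whose fractional Chern class is computed by the identical mechanism; I would fall back on that only if the \v Cech manipulations become unwieldy.
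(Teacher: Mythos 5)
The paper offers no proof of this statement --- it is quoted verbatim from C\u ald\u araru \cite[Theorem 4.1]{Cal02} --- so there is no internal argument to compare against; what you have written is essentially a reconstruction of C\u ald\u araru's own \v{C}ech-theoretic proof (presentation of the twisted sheaf by local free pieces and transition maps, determinants giving $\delta(\det\varphi)=\theta^{\,r}$, the exponential sequence, and evaluation on the fibre $X_0$ where the twist dies). The mechanics are correct: normalising $\theta|_{X_0}=1$, the relation $\delta L=r\Theta+N$, the identification of $\alpha$ with the image of $-\tfrac1r[N+rP]$ in $\tfrac{H^2(X,\Z)}{\NS(X)}\otimes\Q/\Z$, and the restriction to $X_0$ identifying $[N+rP]$ with $c_1(\mathcal{E}_0)$ modulo $r\cdot H^2(X_0,\Z)$ all go through.

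The one step you have not actually justified is the vanishing of the image of $\alpha$ in $H^3(X,\Z)$, which you need in order to produce the integral cochain $P$ with $\delta P=\delta\Theta$. Your parenthetical is circular --- you cannot invoke the shape of the conclusion to supply a hypothesis the argument needs --- and ``$H^3(X,\Z)=0$ in all cases of interest'' proves the application, not the theorem; note that $r\,\delta\Theta=-\delta N$ only shows $[\delta\Theta]$ is $r$-torsion, so torsion in $H^3(X,\Z)$ is a genuine concern. The gap closes in one line from the stated hypotheses: by functoriality of the exponential sequence, the image of $\alpha$ in $H^3(X,\Z)$ restricts to the image of $\alpha|_{X_0}$ in $H^3(X_0,\Z)$, which vanishes because $\alpha|_{X_0}$ is trivial, and restriction $H^3(X,\Z)\to H^3(X_0,\Z)$ is an isomorphism by assumption. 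Two smaller remarks: to extend the trivialisation $\lambda$ of $\theta|_{X_0}$ to units on the $U_{ij}$ without shrinking $T$, extend $\tfrac{1}{2\pi i}\log\lambda_{ij}$ using surjectivity of $\mathcal{O}(U_{ij})\to\mathcal{O}(U_{ij}\cap X_0)$ for Stein $U_{ij}$ and then exponentiate; and since the overall sign of the answer really does hinge on the convention for the connecting map of the exponential sequence, you should fix that convention explicitly rather than leave it as ``the standard one''.
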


\begin{theorem}\label{thm:explicitobstruction}
    Keeping the notation as in Lemma \ref{lem:sestranscendental} and Proposition \ref{prop:sesbrauerkn}, the Brauer class $$\frac{w}{\dv(v)}\in \Br(M)$$ is the obstruction to the existence of a universal sheaf on $S\times M$.
\end{theorem}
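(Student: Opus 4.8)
The plan is to deduce Theorem \ref{thm:explicitobstruction} from C\u ald\u araru's deformation principle (Theorem \ref{thm:CaldaIdentificationBrauerClassFamily}) combined with the degeneration family produced in Proposition \ref{prop:deformuntwistedmoduli}. The key point is that the obstruction class is constant in families in the appropriate sense, so we may compute it at a point where the moduli space is \emph{fine}, and then transport the answer back.

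\begin{proof}[Plan of proof]
First I would invoke Proposition \ref{prop:deformuntwistedmoduli} to obtain a smooth proper family $\calS\to T$ of K3 surfaces over a connected (contractible) base, with the primitive Mukai vector $v$ staying algebraic and the polarisation staying $v$-generic on every fibre, together with the relative moduli space $\calM\to T$ and a $(1\boxtimes\alpha)$-twisted sheaf $\calE$ on $\calS\times_T\calM$ restricting to a twisted universal sheaf fibrewise (part \emph{vi}). By parts \emph{i} and \emph{iv} there are points $0,1\in T$ with $\calS_0\simeq S$ and $\dv_{\N(\calS_1)}(v)=1$. By Proposition \ref{prop:fineifdivone}, $\calM_1 = M_{H_1}(v)$ is a \emph{fine} moduli space, so $\alpha|_{\calM_1}$ is trivial; this is exactly the hypothesis needed to apply Theorem \ref{thm:CaldaIdentificationBrauerClassFamily}, after identifying $\calS\times_T\calM$ as a proper smooth family over $T$ with the required constancy $H^i(\calS\times_T\calM,\Z)\simeq H^i((\calS\times_T\calM)_t,\Z)$ for all $t$ (this constancy follows by shrinking $T$, exactly as in the proof of Proposition \ref{prop:deformuntwistedmoduli}, via the K\"unneth formula and the fact that $H^*(\calS,\Z)$ and $H^*(\calM,\Z)$ are locally constant).

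Next I would apply Theorem \ref{thm:CaldaIdentificationBrauerClassFamily} with $X = \calS\times_T\calM$, base $T$, the twisted sheaf $\calE$, and the distinguished point $1\in T$ at which the twist becomes trivial. This gives
\[
\alpha \;=\; -\frac{c_1(\calE_1)}{\rk(\calE_1)} \;\in\; \frac{H^2(\calS\times_T\calM,\Z)}{\NS(\calS\times_T\calM)}\otimes\Q/\Z,
\]
where $\calE_1$ is the genuine (untwisted) universal sheaf on $\calS_1\times\calM_1$ glued from $\calE|_{(\calS\times_T\calM)_1}$. Restricting to the fibre over $0$ and using Definition/Proposition \ref{defprop:universaltwistedsheaves} (the obstruction class restricts compatibly to fibres), we get that the obstruction class $\alpha_M$ for $S\times M$ is $-c_1(\calE_1)/\rk(\calE_1)$ read inside $T'(M)\otimes\Q/\Z$ via the identifications $H^2(\calS\times_T\calM,\Z)\simeq H^2(\calS_1\times\calM_1,\Z)\simeq H^2(S\times M,\Z)$ and the K\"unneth projection to the $H^0(S)\otimes H^2(M)$ summand (the other K\"unneth components of $c_1(\calE_1)$ land in algebraic classes, since $\calE_1|_{S\times[\calF]}\simeq\calF$ forces the $H^2(S)\otimes H^0(M)$ part to be the algebraic class $c_1(v)$, and the $H^1\otimes H^1$ part vanishes as $H^1(S,\Z)=H^1(M,\Z)=0$).

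Finally I would identify this class with $w/\dv(v)$. The cleanest route is via Lemma \ref{lem:mukaimorphismcharacterisationoftheobstruction}: the Mukai morphism $\varphi$ associated to a quasi-universal sheaf satisfies, by Mukai's computation of the cohomological Fourier--Mukai transform, that the $H^2(M)$-component of $\varphi(u)$ — for $u\in\tilde H(S,\Z)$ with $u\cdot v\equiv 1\pmod{\dv(v)}$ — is precisely (up to sign and algebraic correction) the K\"unneth component of $v(\calU^\vee)$, hence of $-c_1(\calE_1)/\rk(\calE_1)$ after passing to $\calE_1$; and Lemma \ref{lem:mukaimorphismcharacterisationoftheobstruction} records that $[\varphi(u)]_2$ induces exactly $w/\dv(v)$ in $\Br(M)$, independently of all choices. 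Assembling the degeneration identity $\alpha_M = -c_1(\calE_1)/\rk(\calE_1)$ with this identification yields $\alpha_M = w/\dv(v)$, as claimed. The main obstacle I anticipate is the bookkeeping in this last step: one must check that the K\"unneth components of $c_1(\calE_1)$ other than the $H^0(S)\otimes H^2(M)$-part either are algebraic (hence die in $T'(M)\otimes\Q/\Z$) or can be absorbed by the choice of quasi-universal sheaf, so that the deformation-invariant answer of Theorem \ref{thm:CaldaIdentificationBrauerClassFamily} matches the choice-invariant class of Lemma \ref{lem:mukaimorphismcharacterisationoftheobstruction}; everything else is a direct citation of the preceding results.
\end{proof}
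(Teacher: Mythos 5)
Your high-level strategy is the same as the paper's: deform $(S,v,H)$ inside the family of Proposition \ref{prop:deformuntwistedmoduli} to a fibre where $\dv(v)=1$, so the moduli space is fine, and then use C\u ald\u araru's deformation theorem (Theorem \ref{thm:CaldaIdentificationBrauerClassFamily}) plus Lemma \ref{lem:mukaimorphismcharacterisationoftheobstruction} to transport the answer back to the fibre over $0$. However, there is a genuine gap in the step where you apply Theorem \ref{thm:CaldaIdentificationBrauerClassFamily} to the twisted universal sheaf $\calE$ on $\calS\times_T\calM$. That theorem requires a \emph{locally free} twisted sheaf, and the twisted universal sheaf is not locally free in general; worse, for torsion Mukai vectors $v=(0,H,s)$ (e.g.\ the Beauville--Mukai systems that are a main application of this theorem) the sheaf $\calE_1$ has rank $0$ on $\calS_1\times\calM_1$, so the expression $-c_1(\calE_1)/\rk(\calE_1)$ is not even defined. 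The paper avoids this by applying Theorem \ref{thm:CaldaIdentificationBrauerClassFamily} not to $\calE$ but to a locally free $\alpha^{-1}$-twisted sheaf $\calF$ of rank $\rho$ on $\calM$ itself (which exists by \cite[Theorem 1.3.5]{Cal00}), obtaining $\alpha|_M = c_1(\calF_1)/\rho$, and then compares the Mukai morphism of the quasi-universal sheaf $\calU_1=\calE_1\otimes p^*\calF_1$ with that of the honest universal sheaf $\calE_1$ on the fine fibre.

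Your final identification step also papers over a real difficulty. The class $[\varphi(u)]_2$ is \emph{not} simply a K\"unneth component of $c_1(\calU^\vee)/\rho$: it is the degree-$2$ part of $\frac{1}{\rho}p_*(q^*u\cdot v(\calU^\vee))$, which mixes all cohomological degrees of $v(\calU^\vee)$ against the components of $u$. The paper's argument instead needs two concrete inputs on the fine fibre that are absent from your plan: first, $[\varphi'(u)]_0=-1$ (Lemma \ref{lem:mukaipairingmoduli}), which converts the difference $[\varphi_1(u)]_2-[\varphi'(u)]_2$ into exactly $+c_1(\calF_1)/\rho=\alpha|_M$; and second, the integrality of $[\varphi'(u)]_2$, which is proved by choosing $u=v(\calL)$ for a line bundle $\calL$ on $S_1$ (possible precisely because $\dv_{\N(S_1)}(v)=1$) and twisting by $\bigo_{S_1}(k)$, $k\gg0$, so that $\Phi'(\calL)$ is locally free and $[\varphi'(u)]_2=c_1(\Phi'(\calL))$ is an integral class. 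Without these two points the deformation identity does not pin down the class in $T'(M)\otimes\Q/\Z$. So the skeleton of your argument is right, but the two load-bearing computations must be replaced as above for the proof to close.
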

\begin{proof}
We write $v = (r,E,s)\in \operatorname{N}(S)$.
Let $\calS\to T$ and $\calM\to T$ be the families of K3 surfaces and moduli spaces of Proposition \ref{prop:deformuntwistedmoduli}, and let $\alpha\in \Br(\calM)$ be the Brauer class, and $\calE$ the $(1\boxtimes \alpha)$-twisted universal sheaf on $\calS\times_T \calM$ from the same token. We identify $\calS_0 \simeq S$ and $\calM_0\simeq M$.

Pick a locally free $\alpha^{-1}$-twisted sheaf $\calF$ on $\calM$ of rank $\rho$. Fix $\calU\coloneqq \calE\otimes p^*\calF$, which is a relative quasi-universal sheaf on $\calS\times_T \calM$ of similitude $\rho$. For $t\in T$, we consider the induced Fourier-Mukai transforms 
$$\Phi_t\coloneqq \Phi^{\calU_t^\vee}\colon \Db(S_t) \to \Db(M_t)$$
and morphisms of Hodge structures:

$$
 \begin{array}{rl@{\qquad}lr}
	\varphi_t\coloneqq \varphi^{(1/\rho) v(\calU_t^\vee)}: {H}^*(S_t,\Q)&\to H^*(M_t,\Q)  \\
	x&\mapsto \frac{1}{\rho}\cdot p_*\left(v(\calU_t^\vee)\cdot q^*(x)\right).
\end{array}
$$
satisfying 
$v\left( \Phi_t(-)\right)=\rho\varphi_t\left(v(-)\right)$.
By shrinking $T$, we may assume that $\varphi_t$ is a constant map on $t$.

Let $u\in \tilde{H}(S,\Z)$ be a vector such that $u\cdot v = 1 \pmod {\dv(v)}$. Note that the class $[\varphi_0(u)]_2 \in H^2(M,\Q)$ induces a Brauer class on $M$ via the natural surjection $$H^2(M,\Q)\to T'(M)\otimes \Q \to T'(M)\otimes \Q/\Z\simeq \Br(M).$$ By a slight abuse of notation, we also denote this Brauer class by $[\varphi_0(u)]_2.$ By Lemma \ref{lem:mukaimorphismcharacterisationoftheobstruction}, we wish to prove that $$\alpha|_{M} = [\varphi_0(u)]_2 \in \Br(M).$$

On the fibre over $1\in T$, the Brauer class $\alpha_1$ vanishes by Proposition \ref{prop:fineifdivone}, so that $\calE_1$ is a universal sheaf. In particular, $\Phi'\coloneqq \Phi^{\calE_1^\vee}$ and $\varphi'\coloneqq \varphi^{v(\calE_1^\vee)}$ are well-defined. For any $\calG\in \Db(S)$ and $x\in \tilde{H}(S,\Q)$, we have
$\Phi_1(\calG)=\Phi'(\calG)\otimes \calF^\vee_1$ and 
\begin{equation}\label{eq:phiprimeandphi}
\varphi_1(x)=\varphi'(x)\dfrac{\ch(\calF^\vee_1)}{\rho},
\end{equation}
due to the projection formula. In particular, we have $$[\varphi_1(u)]_2=[\varphi'(u)]_2 + [\varphi'(u)]_0\cdot \dfrac{-c_1(\calF_1)}{\rho}.$$
Since this equation is purely topological, it also holds on the fibre at $0$. We know by Theorem \ref{thm:CaldaIdentificationBrauerClassFamily} that $$\alpha|_M = \frac{c_1(\calF_1)}{\rho}$$ 
(recall that $\calF$ is $\alpha^{-1}$-twisted). 

Firstly, note that we have $[\varphi'(u)]_0=-1$ as a consequence of Lemma \ref{lem:mukaipairingmoduli}, since we are assuming that $u\cdot v=1$. 

Secondly, we show that $[\varphi'(u)]_2$ is integral. For this, we first show that we can find a line bundle $\calL$ on $S_1$ and choose $u=v(\calL)$. 

Indeed, since the divisibility of $v$ in $\operatorname{N}(S_1)$ is 1, we can find a divisor $D\in \NS(S_1)$ such that $\operatorname{gcd}(r,D\cdot E,s)=1$. Let $\calL = \bigo_{S_1}(D)$. Then $v(\calL) = (1,D,\frac{D^2}{2}-1)$, and we have $v(\calL)\cdot v = -r\cdot (\frac{1}{2}D^2 - 1) - s + D\cdot E.$ Note that $r$ and $s$ are divisible by $\dv_{\N(S)}(v)$, hence by replacing $D$ by $mD$ for some $m\in \Z$ we obtain $v(\calL)\cdot v\equiv 1\pmod{\dv_{\N(S)}(v)}$.

We now show that, by replacing $\calL$ with $\calL(k)\coloneqq \calL\otimes \bigo_{S_1}(k)$ for $k\gg 0$, we may assume that $\calG\coloneqq \Phi'(\calL)$ is locally free. It suffices to show that $R^jp_*(q^*\calL(k)\otimes \calE^\vee_1)=0$ for all $j>0$ and $k\gg0$. For any point $[\calK] \in M_1$, we have natural maps $$R^jp_*(q^*\calL(k)\otimes \calE^\vee_1)_{[\calK]} \to H^j(S_1,\calK\otimes \calL(k)).$$ 
By boundedness of $M_1$, all these cohomology groups vanish  for all $[\calK]\in M_1$, $j>0$ and $k\gg 0$, which in turn implies that the $R^jp_*(q^*\calL(k)\otimes \calE^\vee_1)_{[\calK]}$ vanish as well.

Note that this completes the proof, since we have $$[\varphi'(u)]_2=[v(\calG)]_2=[c_1(\calG)]\in H^2(M_1,\matZ).$$
\end{proof}

\begin{corollary}\label{cor:fineiffdivone}
   Let $S$ be a K3 surface and let $v\in \operatorname{N}(S)$ be a Mukai vector with $v^2 >0$. Let $H$ be a $v$-generic polarisation of $S$. Then the moduli space $M_H(v)$ is fine if and only if $\operatorname{div}(v)=1$.
\end{corollary}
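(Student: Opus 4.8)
The plan is to obtain the corollary as an immediate consequence of Theorem \ref{thm:explicitobstruction} and Proposition \ref{prop:sesbrauerkn}. First I would recall that, by Definition/Proposition \ref{defprop:universaltwistedsheaves}, the moduli space $M_H(v)$ is fine precisely when its obstruction class $\alpha_M\in\Br(M)$ is trivial; indeed a universal sheaf on $S\times M$ is exactly a $(1\boxtimes\alpha_M)$-twisted universal sheaf in the case $\alpha_M=0$, and such a twisted sheaf always exists by Definition/Proposition \ref{defprop:universaltwistedsheaves}. I would also note that $v^2>0$ together with the evenness of $\N(S)$ forces $v^2\ge 2$, so that $\dim M\ge 4$ and the hypotheses under which Theorem \ref{thm:explicitobstruction} was proved are in force.

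Next, by Theorem \ref{thm:explicitobstruction} we have $\alpha_M=\frac{w}{\dv(v)}\in\Br(M)$, and by Proposition \ref{prop:sesbrauerkn} this class generates the kernel of the surjection $\Br(M)\to\Br(S)$. It then remains to determine its order in $\Br(M)\simeq T'(M)\otimes\Q/\Z$. Writing $\iota\colon T'(M)\hookrightarrow T(S)^*$ for the embedding of Lemma \ref{lem:sestranscendental}, under which $\iota(w)=\dv(v)\cdot u$, the class $k\cdot\frac{w}{\dv(v)}$ vanishes if and only if $kw\in\dv(v)\cdot T'(M)$, equivalently (cancelling $\dv(v)$ in the torsion-free group $T(S)^*$) if and only if $k\cdot u\in\iota(T'(M))$; since $u$ has order $\dv(v)$ in $T(S)^*/T'(M)$ by Lemma \ref{lem:sestranscendental}, this happens exactly when $\dv(v)\mid k$. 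Hence $\alpha_M$ has order $\dv(v)$ in $\Br(M)$, so $\alpha_M=0$ if and only if $\dv(v)=1$, which is the assertion.

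I would finally remark that the implication ``$\dv(v)=1\Rightarrow M_H(v)$ is fine'' is already Proposition \ref{prop:fineifdivone}, so that only the converse genuinely uses Theorem \ref{thm:explicitobstruction}. I do not expect any real obstacle here: the only mildly delicate point is the order computation for $\frac{w}{\dv(v)}$, and that is an immediate bookkeeping consequence of the identification $T(S)^*/T'(M)\simeq\Z/\dv(v)\Z$ recorded in Lemma \ref{lem:sestranscendental}.
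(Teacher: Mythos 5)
Your proposal is correct and follows exactly the route the paper intends: the corollary is an immediate consequence of Theorem \ref{thm:explicitobstruction} identifying the obstruction class as $\frac{w}{\dv(v)}$, together with the fact (implicit in Lemma \ref{lem:sestranscendental} and Proposition \ref{prop:sesbrauerkn}) that this class has order $\dv(v)$ in $\Br(M)$. Your explicit verification of the order via the embedding $T'(M)\hookrightarrow T(S)^*$ is sound and simply spells out what the paper leaves as bookkeeping.
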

For isotropic Mukai vectors, the conclusion of Corollary \ref{cor:fineiffdivone} follows immediately from the work of C\u ald\u araru \cite{Cal00}, but it also follows directly from the Derived Torelli Theorem \ref{thm:introderivedtorelli}. In the higher-dimensional case, the fact that $M_H(v)$ is fine whenever $\operatorname{div}(v)=1$ was already known, c.f. Proposition \ref{prop:fineifdivone}. The new part of Corollary \ref{cor:fineiffdivone} is that the divisibility of the Mukai vector is $1$ for higher-dimensional fine moduli spaces.

\section{Beauville--Mukai systems}

Let $S$ be a K3 surface, and let $H\subset S$ be a smooth, irreducible curve of genus $g>0$, whose class in $\NS(S)$ is primitive. 
Let $\calC\to |H|$ be the universal curve over the linear system $|H|$. 
This means that, for any $[C]=x\in |H|$, there is an isomorphism $\calC_x\simeq C$.
For $d\in \Z$, consider the Mukai vector 
\begin{equation}\label{eq:vd}
    v_d \coloneqq (0,H,d+1-g).
\end{equation}
For a $v_d$-generic polarisation $H'\in \NS(S)$, we write $M_d = M_{H'}(v_d)$. 
Note that a line bundle $L$ of degree $d$ on a smooth, irreducible curve $C\in |H|$ satisfies $$v(i_*L) = (0,H,d+1-g),$$ where $i:C\hookrightarrow S$ is the inclusion.
Therefore, we have an inclusion $$\Pic^d(\calC_{\operatorname{sm}}/|H|_{\operatorname{sm}})\hookrightarrow M_d,$$ where $\calC_{\operatorname{sm}}\to |H|_{\operatorname{sm}}$ is the restriction of the universal curve to the locus of smooth curves in $|H|$. 
For this reason, we usually denote $$\olPic^d(\calC/|H|)\coloneqq M_d.$$
To keep the notation light, we often omit $\calC/|H|$ from the notation if this cannot lead to confusion. Note that, since $H$ is primitive in $\NS(S)$, the Mukai vector $v_d\in \N(S)$ is primitive for all $d\in \Z$. Hence, by Theorem \ref{thm: moduli sheaves K3}, $\olPic^d$ is a smooth hyperk\"ahler manifold of $\text{K3}^{[g]}$-type.

Note that there is a natural morphism 
$$\fonction{f}{\olPic^d}{|H|}{[\calF]}{\operatorname{Supp}(\calF),}$$
which is a Lagrangian fibration. Moreover, for a smooth curve $C\in |H|$, the fibre of $f$ over $C$ is isomorphic to $\Pic^d(C)$.

Letting $\eta\in |H|$ be the generic point, the generic fibre $\olPic^0_\eta$ is an abelian variety over $\spec(\CC(\eta))$, and for any $d\in \Z$, the generic fibre $\olPic^d_\eta$ is a torsor under $\olPic^0_\eta$.  

Define $\zeta_H: A_{\NS(S)}\to \Z/\dv(H)\Z$ by $$\zeta_{H}(\ol{a}) = a\cdot H\pmod {\dv(H)},$$ where $\ol{a}$ denotes the image of $a\in \NS(S)^*$ in $A_{\NS(S)}$. We note that this map is well-defined, since for any $a'\in \NS(S)^*$ such that $\ol{a'}=\ol{a}$, there exists a class $x\in \NS(S)$ such that $a'+x = a$, hence $a\cdot H = (a'+x)\cdot H = a'\cdot H \pmod{\dv(H)}$.
Recall from Lemma \ref{lem:intersectingspecialbrauer} that we may view $A_{T(S)}\simeq A_{\NS(S)}(-1)$ as a subgroup of $T(S)\otimes \Q/\Z$ via the inclusion $T(S)^*\hookrightarrow T(S)\otimes \Q$. 

\begin{definition}\cite{HM23}
    The \textit{Tate--Shafarevich group} of the pair $(S,H)$ is the group $$\Sha(S,H)\coloneqq \frac{T(S)\otimes \Q/\Z}{\ker(\zeta_H)}.$$
\end{definition}

The reason $\Sha(S,H)$ is called the Tate--Shafarevich group is that it parametrises certain torsors of the abelian variety $\olPic^0_\eta$. It generalises the Tate--Shafarevich group of an elliptic K3 surface. More precisely, if $S\to \PP^1$ is an elliptic K3 surface that admits a section, and $F\in \NS(S)$ denotes the fibre class of the elliptic fibration, then there is an isomorphism $\Sha(S,F)\simeq \Sha(S)$ \cite[\S 5.1]{HM23}. 

For any $d\in \Z$, the Beauville--Mukai system $\olPic^d\to |H|$ defines an element of $\Sha(S,H)$. For details, we refer to \cite[Example 4.14]{HM23}. The element of $\Sha(S,H)$ corresponding to $\olPic^d$ is the element induced by 
\begin{equation}\label{eq:picdinshafa}
    \frac{-d}{\dv(D)}\cdot D \in A_{\NS(S)}\subset T(S)\otimes \Q/\Z,
\end{equation}
where $D\in \NS(S)$ is a divisor with $\dv(D) = D\cdot H$. Such a divisor exists and can be constructed as follows. Let $u\in H^2(S,\Z)$ be a class that satisfies $u\cdot H = 1$. This class exists since $H$ is primitive and $H^2(S,\Z)$ is unimodular. Also by unimodularity, there exists a divisor $D\in \NS(S)$, and an integer $m\in \Z$ such that $u\cdot E = \frac{D}{m}\cdot E$ for all $E\in\NS(S)$. Moreover, we have $\frac{D}{m}\in \NS(S)^*$, since $u$ is an integral vector, hence $m\leq\dv(D)$. On the other hand, from $\frac{D}{m}\cdot H = 1$, it follows that we have $m = D\cdot H$, hence $m\geq \dv(D)$, and we find $m = \dv(D)$.

\begin{remark}
    Let us briefly explain our choice of $D$ above. For simplicity, assume $d=1$. In the notation of \cite{HM23}, for any vector bundle $\calE$ on $S$ of rank $r$ and determinant $L$, there exists an isomorphism 
    $$\isomorphismstar{\Pic^1}{\Pic^{d'}_{\SEnd(\calE)}}{F}{F\otimes\calE}$$
    where $d'=1+L\cdot H/r$. Moreover, the class $[\SEnd(\calE)]\in \SBr(S)$ corresponds to the class $(1/r)\cdot L\in \NS(S)\otimes \bbQ/\bbZ$. Therefore, for any divisor $D\in \NS(S)$, one can pick $L=-D$ and $r=D\cdot H$ to get an isomorphism $\Pic^d\simeq \Pic^0_{\SEnd(\calE)}$. However, $\SEnd(\calE)$ only defines a class in $\Sha(S,H)$ if $[\SEnd(\calE)]$ lies in the image of $A_{\NS(S)}\subset \NS(S)\otimes \bbQ/\bbZ$, i.e. if $(-1/D\cdot H)\in \bbZ[1/\dv(D)]$, equivalently $D\cdot H=\dv(D)$. 

\end{remark}

For now, let us note that $[\olPic^1]$ corresponds to the image of $u\in H^2(S,\Z)$ along the composition $$H^2(S,\Z)\to T(S)^*\to T(S)\otimes \Q/\Z.$$

\begin{remark}\label{rem:whichdegreesgiveisomorphictorsors}
    The element \eqref{eq:picdinshafa}, corresponding to $[\olPic^d]\in \Sha(S,H)$, is trivial if and only if $d$ is a multiple of $\dv(H)$. Indeed, since $\dv(D)=D\cdot H=k\dv(H)$ for some $k$, we have 
    $$\zeta_H\left(\dfrac{-dD}{\dv(D)}\right)=\dfrac{-d\dv(D)}{\dv(D)}=\overline{-d}\in \bbZ/\dv(H)\bbZ.$$
    This means that $\olPic^d_\eta$ and $\olPic^e_\eta$ are isomorphic $\olPic^0_\eta$-torsors if and only if $d -e\equiv0\pmod{\dv(H)}$. Notice that, in particular, $[\olPic^1]=[-D/\dv(D)]$ has order $\dv(H)$.
\end{remark}

\subsection{A Donagi--Pantev type result}
Recall the following result by Addington, Donovan, and Meachan:
\begin{theorem}\cite{ADM16}\label{thm:ADM}
    Let $S$ be a K3 surface with $\NS(S) = \langle H\rangle$, where $H$ is ample. Then for any $d,e\in \Z$, there is an equivalence $$\Db(\olPic^d,\alpha_d^e)\simeq \Db(\olPic^e,\alpha_e^{-d}).$$
\end{theorem}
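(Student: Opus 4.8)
The plan is to realise the stated equivalence as a \emph{relative Fourier--Mukai transform} along the Lagrangian fibrations $\olPic^d\to|H|$ and $\olPic^e\to|H|$, in direct analogy with the Donagi--Pantev theorem \cite{DP08} for elliptic fibrations. Over the open locus $|H|_{\sm}\subset|H|$ of smooth curves, the fibre of $\olPic^d$ over $[C]$ is $\Pic^d(C)$, a torsor under the Jacobian $\Pic^0(C)$, which is a principally polarised abelian variety and hence canonically self-dual. The first step is to construct, on the fibre product $Z_{\sm}\coloneqq\olPic^d_{\sm}\times_{|H|_{\sm}}\olPic^e_{\sm}$, a relative Poincaré sheaf $\calP$ whose restriction to the fibre over $[C]$ is a translate of the Mumford--Poincaré bundle of $\Pic^0(C)$; classical Fourier--Mukai theory for abelian varieties then shows that convolution with $\calP$ is an equivalence $\Db(\Pic^d(C))\simeq\Db(\Pic^e(C))$ fibre by fibre over $|H|_{\sm}$.

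The second step is to identify the Brauer twist carried by $\calP$. Because the universal curve $\calC\to|H|$ admits no section, there is no global normalisation of $\calP$ along the slices $\olPic^d_{\sm}\times\{M\}$ and $\{L\}\times\olPic^e_{\sm}$; the obstruction on the first slice is the Ogg--Shafarevich class of $\olPic^e$ and on the second that of $\olPic^d$, which by Theorem \ref{thm:explicitobstruction} are the obstruction classes $\alpha_e$ and $\alpha_d$. Since $[\olPic^d]=d\,[\olPic^1]$ and $[\olPic^e]=e\,[\olPic^1]$ in $\Sha(S,H)$ by Remark \ref{rem:whichdegreesgiveisomorphictorsors} and \eqref{eq:picdinshafa}, tracking these obstructions through the construction shows that $\calP$ is a twisted sheaf on $Z_{\sm}$ whose twist is pulled back from the two factors via the classes $\alpha_d^{e}$ and $\alpha_e^{-d}$ (each exponent being the degree of the opposite factor, up to the signs fixed by the orientation of the Poincaré bundle). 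Equivalently, $\calP$ is a genuine sheaf on the fibre product of the $\bbG_m$-gerbes defining $\alpha_d^e$ on $\olPic^d$ and $\alpha_e^{-d}$ on $\olPic^e$, which is exactly the datum of a kernel for a functor $\Phi_{\calP}\colon\Db(\olPic^d,\alpha_d^e)\to\Db(\olPic^e,\alpha_e^{-d})$. All the powers make sense because, as $\NS(S)=\langle H\rangle$, we have $\dv(H)=2g-2$ and $\ord(\alpha_d)=\gcd(2g-2,d+1-g)$ by Theorem \ref{thm:explicitobstruction}.

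The third and most delicate step is to extend $\calP$ across the discriminant locus and to upgrade $\Phi_{\calP}$ to a global equivalence. The singular fibres of $\olPic^d\to|H|$ are compactified Jacobians of integral curves with planar singularities (integrality uses $\NS(S)=\langle H\rangle$), and the relative Poincaré sheaf extends over them while remaining a fibrewise equivalence by Arinkin's autoduality of compactified Jacobians; this produces a twisted sheaf $\calP$ on the whole fibre product $Z\coloneqq\olPic^d\times_{|H|}\olPic^e$, flat over $|H|$. Since $\olPic^d$ and $\olPic^e$ are smooth and projective, $\Phi_{\calP}$ is then a well-defined exact functor, and one verifies the convolution identities $\calP\circ\calP^{\vee}\simeq\calO_{\Delta}[-g]$ and its symmetric counterpart: both sides are $|H|$-flat and supported on the diagonal, and they agree over the generic point of $|H|$ by the abelian-variety case, hence agree everywhere by cohomology and base change. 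By the standard Bridgeland-type criterion (both $\olPic^d$ and $\olPic^e$ have trivial canonical bundle, so full faithfulness is enough) this yields the equivalence $\Db(\olPic^d,\alpha_d^e)\simeq\Db(\olPic^e,\alpha_e^{-d})$.

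I expect the genuine obstacle to be this last step --- controlling the relative Poincaré sheaf and the convolution identities over the singular fibres. If one prefers to avoid Arinkin's autoduality, an alternative is to exhibit $\olPic^e$ directly as a (twisted) fine moduli space of objects on $\olPic^d$, namely the family of theta-type twisted line bundles $\{\,\calP|_{\olPic^d\times[M]}\,\}_{[M]\in\olPic^e}$ supported on the fibres of the Lagrangian fibration, and to invoke a twisted analogue of the principle that a fine moduli space of stable objects is derived equivalent to its target; the Brauer bookkeeping is then identical to the second step. Either way, the heart of the argument is the twist computation, for which Theorem \ref{thm:explicitobstruction} is the essential new input, and which is exactly what makes the analogy with Donagi--Pantev precise.
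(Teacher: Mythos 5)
Theorem \ref{thm:ADM} is not proved in this paper at all: it is quoted verbatim from \cite{ADM16} and used as a black box (the paper's own contribution in this direction is Corollary \ref{cor:adm=dp}, which combines the ADM equivalence with Proposition \ref{prop:piconemapstoobstruction}). So there is no in-paper proof to compare against. That said, your outline is essentially a reconstruction of the actual Addington--Donovan--Meachan argument: a relative Poincar\'e kernel on $\olPic^d\times_{|H|}\olPic^e$, extended over the discriminant via Arinkin's autoduality of compactified Jacobians of integral planar curves (integrality being where $\NS(S)=\langle H\rangle$ is used), together with a computation of the gerbe/Brauer twist carried by the kernel. In that sense the strategy is the right one.

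As a standalone proof, however, two steps are genuinely incomplete. First, the exponents $e$ and $-d$ in the twists are the whole content of the theorem, and in your second step they are asserted (``tracking these obstructions through the construction shows\ldots'') rather than derived. What you would actually need is a precise statement that the gluing obstruction of $\calP$ along $\olPic^d\times\{M\}$ is the image of the Tate--Shafarevich class $[\olPic^e]=e\,[\olPic^1]$ under a higher-dimensional analogue of C\u ald\u araru's isomorphism $\Sha\simeq\Br$; that statement is itself a theorem requiring proof (it is the analogue of \cite[Theorem 4.4.1]{Cal00}), and only after it is established does Proposition \ref{prop:piconemapstoobstruction} convert $e\,[\olPic^1]$ into $\alpha_d^e$. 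Second, the argument you give for $\calP\circ\calP^{\vee}\simeq\calO_{\Delta}[-g]$ is not valid: ``supported on the diagonal'' is precisely what must be proved over the discriminant locus (it encodes orthogonality of point-objects on singular compactified Jacobians, which is the hard part of Arinkin's theorem), and a complex agreeing with $\calO_\Delta[-g]$ over the generic point of $|H|$ can still acquire components supported over the discriminant divisor; cohomology and base change does not exclude this without a flatness statement for the convolution that you have not established. The correct route is to invoke Arinkin's fibrewise equivalence on \emph{every} fibre, including the singular ones, and then apply a criterion for relative integral functors (or check fully faithfulness on skyscrapers directly), which is what \cite{ADM16} do. Your fallback suggestion --- exhibiting $\olPic^e$ as a fine moduli space of $\alpha_d^e$-twisted stable sheaves on $\olPic^d$ supported on Lagrangian fibres --- is closer to a complete argument, but it inherits exactly the same two gaps.
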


The main goal of this subsection is to exhibit this result as a higher-dimensional generalisation of a result by Donagi and Pantev of \cite{DP08}. This is done by Corollary \ref{cor:adm=dp} below. The statement and proof of Corollary \ref{cor:adm=dp} heavily rely on our explicit computation of the obstruction class in Theorem \ref{thm:explicitobstruction}.

\subsubsection{Twisted derived equivalences of elliptic K3 surfaces}
First, we briefly recall the results of Donagi and Pantev. Their results hold more generally for elliptic surfaces $S\to \PP^1$ whose fibres have at worst $I_1$-singularities, but we restrict our attention to elliptic K3 surfaces. Let $S\to \PP^1$ be an elliptic K3 surface which admits a section. Recall that the Tate--Shafarevich group $\Sha(S)$ of $S$ parametrises pairs $(X\to \PP^1,\phi)$, where $X\to \PP^1$ is an elliptic K3 surface, and $\phi: \Jac^0(X)\simeq S$ is an isomorphism over $\PP^1$ which preserves the chosen sections, see \cite[\S11.5]{Huy16} for details. Elements of $\Sha(S)$ are called $S$-torsors. Moreover, there is a canonical isomorphism 
\begin{equation}\label{eq:isom--sha--br}
    \Sha(S)\simeq \Br(S).
\end{equation}
This isomorphism can be understood as follows. Suppose $(X\to \PP^1,\phi)$ is an $S$-torsor. Then $\phi$ exhibits $S$ as a coarse moduli space of sheaves on $X$, hence there is a unique Brauer class $\alpha_X\in \Br(S)$ which is the obstruction to the existence of a universal sheaf on $X\times S$ by Definition/Proposition \ref{defprop:universaltwistedsheaves}. The isomorphism  \eqref{eq:isom--sha--br} is given by 
\begin{equation*}
    \functionstar{\Sha(S)}{\Br(S)}{(X\to \PP^1,\phi)}{\alpha_X}.
\end{equation*}
Moreover, short exact sequence \eqref{eq:sesbrauerfork3} takes the following form in this setting:
\begin{equation*}
    0\to \langle \alpha_X\rangle \to \Br(S)\to \Br(X)\to 0,
\end{equation*}
and via \eqref{eq:isom--sha--br}, this sequence may be interpreted as the sequence 
\begin{equation}\label{eq:ses--shas--brx}
    0 \to \langle[(X\to\PP^1,\phi)]\rangle \to \Sha(S)\to \Br(X)\to 0.
\end{equation}

Let $(Y\to \PP^1,\psi)\in \Sha(S)$ be another $S$-torsor, and denote by $\ol{\alpha_Y}\in \Br(X)$ the image of $[(Y\to\PP^1,\psi)]\in \Br(S)$ in $\Br(X).$ Similarly, we denote by $\ol{\alpha_X}\in \Br(Y)$ the image of $\alpha_X$. 

\begin{theorem}\cite{DP08}\label{thm:donagi--pantev}
    Keeping the notation as above, there is an equivalence $$\Db(X,\ol{\alpha_Y})\simeq \Db(Y,\ol{\alpha_X}^{-1}).$$
\end{theorem}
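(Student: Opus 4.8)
This is the theorem of Donagi and Pantev \cite{DP08}; my plan is to follow their argument, rephrased in the language of this paper, by producing a twisted Fourier--Mukai kernel on $X\times Y$ from the relative Poincar\'e sheaf of the common Jacobian elliptic fibration and checking fibrewise that it induces an equivalence. As this construction underlies Corollary \ref{cor:adm=dp}, it is worth spelling out.

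First I would work over the smooth locus. Let $U\subset\PP^1$ be the complement of the discriminant of $S\to\PP^1$, so that $S_U\to U$ is a smooth relative elliptic curve with zero section, and $X_U\to U$, $Y_U\to U$ are $S_U$-torsors whose classes in $H^1(U,S_U)$ restrict $[X],[Y]\in\Sha(S)$; these correspond to $\alpha_X,\alpha_Y\in\Br(S)$ under \eqref{eq:isom--sha--br}. Trivialising $X_U$ and $Y_U$ over a common open cover of $U$ and transporting the normalised Poincar\'e line bundle on $S_U\times_U S_U$ along those trivialisations yields a twisted line bundle $\calQ_U$ on $X_U\times_U Y_U$, and I claim the associated twisted integral functor is
\[\Phi_{\calQ_U}\colon \Db\big(X_U,\, \ol{\alpha_Y}|_{X_U}\big) \longrightarrow \Db\big(Y_U,\, \ol{\alpha_X}^{-1}|_{Y_U}\big),\]
which on each fibre is the classical Fourier--Mukai equivalence between a torsor of an elliptic curve and its Fourier--Mukai-dual torsor. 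Here $\ol{\alpha_Y}\in\Br(X)$ and $\ol{\alpha_X}\in\Br(Y)$ are the images of $\alpha_Y,\alpha_X\in\Br(S)$ under the surjections of \eqref{eq:ses--shas--brx} for $X$ and for $Y$ — the instances of \eqref{eq:sesbrauerfork3} with (surface, moduli space) equal to $(X,S)$, resp.\ $(Y,S)$, exactly as in the statement — and the inverse on the $Y$-factor reflects that autoduality of an elliptic curve sends a translation torsor to its inverse.

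Next I would globalise, extending $\calQ_U$ across the singular fibres to a twisted sheaf $\calQ$ on $X\times_{\PP^1}Y$, hence on $X\times Y$ through the closed embedding $X\times_{\PP^1}Y\hookrightarrow X\times Y$; here one uses the theory of relative integral functors for genus-one fibrations together with the description of the compactified relative Jacobian (Bridgeland, Bridgeland--Maciocia; equivalently the gerby reformulation of \cite{DP08}). One can also try to stay inside the present framework: by Definition/Proposition \ref{defprop:universaltwistedsheaves}, $S$ is a two-dimensional moduli space of sheaves both on $X$ and on $Y$, with obstruction classes $\alpha_X,\alpha_Y\in\Br(S)$ and corresponding twisted universal sheaves on $X\times S$ and $Y\times S$, which one expects to assemble into $\calQ$ with the help of the Mukai morphism (Theorem \ref{thm:mukaimorphism}) and the explicit obstruction class (Theorem \ref{thm:explicitobstruction}) to control twists. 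I expect this globalisation — and in particular the verification that the resulting twist is exactly the one pairing $\ol{\alpha_Y}$ with $\ol{\alpha_X}^{-1}$, and not a class differing from it by a pull-back from $\PP^1$ or by a sign — to be the main obstacle.

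Finally I would show that the resulting twisted integral functor $\Phi_\calQ\colon\Db(X,\ol{\alpha_Y})\to\Db(Y,\ol{\alpha_X}^{-1})$ is an equivalence. Over each smooth fibre this is the classical Fourier--Mukai equivalence for torsors of an elliptic curve (twisted by the relevant Brauer classes), and cohomology and base change spread this out over $U$; I would then conclude either by a relative spanning-class argument that also treats the singular fibres, or — remaining on K3 surfaces — by checking that $\calQ$ induces a Hodge isometry of twisted Mukai lattices $\wt H(X,\ol{\alpha_Y},\Z)\simeq\wt H(Y,\ol{\alpha_X}^{-1},\Z)$ and invoking the global twisted derived Torelli theorem for K3 surfaces. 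Given the fibrewise picture, this last step should be essentially formal.
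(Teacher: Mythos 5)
The paper does not prove this statement: Theorem \ref{thm:donagi--pantev} is imported verbatim from \cite{DP08} and used as a black box (its only role here is to motivate Corollary \ref{cor:adm=dp} as a higher-dimensional analogue). So there is no in-paper argument to compare yours against; the expected treatment is simply the citation.

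Judged as a standalone proof, your sketch correctly identifies the Donagi--Pantev strategy (twisted relative Poincar\'e kernel on $X\times_{\PP^1}Y$, fibrewise Fourier--Mukai duality for torsors, then globalisation), but the two steps you flag as ``the main obstacle'' are not incidental --- they \emph{are} the theorem. Concretely: (1) extending $\calQ_U$ across the singular fibres to a kernel that still induces an equivalence is the technical heart of \cite{DP08} (handled there via the gerbe of liftings and the presentation of $X$ and $Y$ as twisted moduli spaces of sheaves on $S$, not by a naive closure of the Poincar\'e bundle); and (2) identifying the resulting twist as exactly $\ol{\alpha_Y}^{-1}\boxtimes\ol{\alpha_X}^{-1}$ --- so that the integral functor goes from $\Db(X,\ol{\alpha_Y})$ to $\Db(Y,\ol{\alpha_X}^{-1})$ rather than differing by a sign or a class pulled back from $\PP^1$ --- requires the \v{C}ech-level bookkeeping of the Ogg--Shafarevich identification \eqref{eq:isom--sha--br}, which your trivialisation argument gestures at but does not carry out. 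Your fallback via twisted Mukai lattices and the twisted derived Torelli theorem of Huybrechts--Stellari is a viable alternative route for K3 surfaces specifically, but it too reduces to computing the induced Hodge isometry of twisted lattices, which is not ``essentially formal.'' If you want to include an argument, either reproduce the relevant computation from \cite{DP08} in full or leave the statement as a citation, as the paper does.
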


\subsubsection{Tate--Shafarevich groups of Beauville--Mukai systems}
We now turn our attention back to Beauville--Mukai systems. Let $S$ be a K3 surface.
For $M$ a smooth moduli space of sheaves on $S$, recall that there is a natural chain of inclusions $T(S)\hookrightarrow T'(M)\hookrightarrow T(S)^*.$ We denote $A\coloneqq T'(M)/T(S)$, and consider it as a subgroup of $A_{T(S)} \simeq T(S)^*/T(S).$ Using this, we obtain the short exact sequence
\begin{equation}\label{eq:sespreshafa}
    0\to A\to T(S)\otimes \Q/\Z\to T'(M)\otimes \Q/\Z\to 0,
\end{equation}
where $A$ is embedded in $T(S)\otimes \Q/\Z$ using the inclusion $A\subset A_{T(S)}\hookrightarrow T(S)\otimes \Q/\Z$.  
\begin{proposition}\label{prop:sesshafabrauer}
    Let $S$ be a K3 surface, and let $H\subset S$ be a smooth, irreducible curve whose class in $\NS(S)$ is primitive, and such that $H^2 = 2g-2\geq 2$. If we take $M = \olPic^d$ in short exact sequence \eqref{eq:sespreshafa}, we obtain the short exact sequence:
   \begin{equation}\label{eq:ses-sha--br}
        0\to \frac{A}{\ker(\zeta_H)}\to \Sha(S,H) \to \Br(\olPic^d)\to 0.
    \end{equation} 
    Moreover, the kernel is cyclic of order $$\frac{\dv(H)}{\dv(v_d)} = \frac{\dv(H)}{\gcd(\dv(H),d+1-g)}.$$ Here, $v_d$ is the Mukai vector of $\olPic^d$, see \eqref{eq:vd}.
\end{proposition}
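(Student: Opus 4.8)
The plan is to obtain \eqref{eq:ses-sha--br} by dividing out the finite subgroup $\ker(\zeta_H)$ from the first two terms of \eqref{eq:sespreshafa} with $M=\olPic^d$. The outer terms are identified at once: since $\olPic^d$ is of $\text{K3}^{[g]}$-type we have $H^3(\olPic^d,\Z)=0$, so \eqref{eq:brauergroup} gives $\Br(\olPic^d)\simeq T'(\olPic^d)\otimes\Q/\Z$, while $\Sha(S,H)=(T(S)\otimes\Q/\Z)/\ker(\zeta_H)$ by definition. Writing $A\coloneqq T'(\olPic^d)/T(S)$, regarded as a subgroup of $A_{T(S)}$, the entire content of the proposition is then: (a) $\ker(\zeta_H)\subseteq A$, and (b) $A/\ker(\zeta_H)$ is cyclic of the stated order. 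Granting (a), one quotients \eqref{eq:sespreshafa} by $\ker(\zeta_H)\subseteq A$ and invokes the third isomorphism theorem ($G=T(S)\otimes\Q/\Z$, $H=A$, $K=\ker(\zeta_H)$) to get the exact sequence $0\to A/\ker(\zeta_H)\to\Sha(S,H)\to\Br(\olPic^d)\to 0$; so it remains to prove (a) and (b).

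First I would record that $\dv(v_d)=\gcd(\dv(H),d+1-g)$: pairing $v_d=(0,H,d+1-g)$ against an arbitrary $(r,L,s)\in\N(S)$ with the Mukai form gives $(r,L,s)\cdot v_d=L\cdot H-r(d+1-g)$, whence the claim, and in particular $\dv(v_d)\mid\dv(H)$. Then choose $u\in H^2(S,\Z)$ with $u\cdot H=1$ (such $u$ exists because $H$ is primitive in the unimodular lattice $H^2(S,\Z)$); then $u\cdot v_d=1$, so by Lemma \ref{lem:sestranscendental} the image $\bar u\in T(S)^*$ of $u$ generates $T(S)^*/T'(\olPic^d)\simeq\Z/\dv(v_d)\Z$, and I normalise this isomorphism so that $\bar u\mapsto 1$.

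Now let $\bar a\in A_{T(S)}\simeq A_{\NS(S)}$ be arbitrary; represent it by $a\in\NS(S)^*$ and lift $a$ to an integral class $\tilde a\in H^2(S,\Z)$ (possible by unimodularity of $H^2(S,\Z)$). Unwinding definitions, $\zeta_H(\bar a)=\tilde a\cdot H\bmod\dv(H)$. On the other hand, the natural quotient map $q\colon A_{T(S)}=T(S)^*/T(S)\twoheadrightarrow T(S)^*/T'(\olPic^d)\simeq\Z/\dv(v_d)\Z$ sends $\bar a$ to $\tilde a\cdot H\bmod\dv(v_d)$: indeed $\tilde a-(\tilde a\cdot H)\,u$ is orthogonal to $v_d$ in $\tilde{H}(S,\Z)$, so its class lies in $T'(\olPic^d)=v_d^\perp/(v_d^\perp\cap\N(S))\subseteq T(S)^*$, whence $\bar a\equiv(\tilde a\cdot H)\,\bar u\pmod{T'(\olPic^d)}$. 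Therefore $q$ is the composite of $\zeta_H$ with the reduction $\Z/\dv(H)\Z\twoheadrightarrow\Z/\dv(v_d)\Z$. Since $\ker(q)=T'(\olPic^d)/T(S)=A$ and $\zeta_H$ is surjective (already the class of $u$ maps to $1$), this yields at once $\ker(\zeta_H)\subseteq A$, which is (a), and $A/\ker(\zeta_H)=\ker(q)/\ker(\zeta_H)\simeq\ker\bigl(\Z/\dv(H)\Z\to\Z/\dv(v_d)\Z\bigr)$, a cyclic group of order $\dv(H)/\dv(v_d)=\dv(H)/\gcd(\dv(H),d+1-g)$, which is (b).

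The step I expect to need the most care is this middle one --- checking that membership of $\bar a$ in $A$ really is equivalent to $\tilde a\cdot H\equiv 0\pmod{\dv(v_d)}$, i.e.\ that $q$ and $\zeta_H$ agree up to reduction. This rests on keeping several identifications mutually consistent: $A_{T(S)}\simeq A_{\NS(S)}$ (Lemma \ref{lem:intersectingspecialbrauer}), the Mukai-morphism Hodge isometry $H^2(\olPic^d,\Z)\simeq v_d^\perp$ of Theorem \ref{thm:mukaimorphism} together with the induced $T'(\olPic^d)\simeq v_d^\perp/(v_d^\perp\cap\N(S))$, and $\tilde{H}(S,\Z)/\N(S)\simeq T(S)^*$ from unimodularity of the Mukai lattice. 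Once this compatibility is pinned down, the divisibility computation and the passage to quotients in \eqref{eq:sespreshafa} are purely formal.
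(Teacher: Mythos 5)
Your proposal is correct and follows essentially the same route as the paper: both rest on the characterisation of $T'(\olPic^d)\subset T(S)^*$ as the classes pairing with $v_d$ in $\dv(v_d)\Z$ (Lemma \ref{lem:sestranscendental}), deduce $\ker(\zeta_H)\subseteq A$ from $\dv(v_d)\mid\dv(H)$, apply the third isomorphism theorem, and then compute the order of $A/\ker(\zeta_H)$. The only cosmetic difference is that the paper exhibits an explicit generator of $A/\ker(\zeta_H)$, whereas you identify it with the kernel of the reduction $\Z/\dv(H)\Z\twoheadrightarrow\Z/\dv(v_d)\Z$; these are equivalent.
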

\begin{proof}
    Recall from the previous section that we have $$\im(T'(\olPic^d)\hookrightarrow T(S)^*) = \left\{a\in T(S)^*\mid a\cdot v_d\in \dv(v_d)\Z\right\}.$$ 
    This implies that $\ker(\zeta_H)\subset A$, hence short exact sequence \eqref{eq:ses-sha--br} is obtained from \eqref{eq:sespreshafa} by taking the quotient with $\ker(\zeta_H)$ and using the third isomorphism theorem. 
    Choose any $a\in A$ such that $a\cdot v_d = \dv(v_d)$. We claim that $a$ generates $A/\ker(\zeta_H)$. Let $b\in A$ be any other element, then we can write $b\cdot H = \dv(v_d)\cdot k$ for some $k\in \Z$. Since $ka-b\in A$ satisfies $(ka-b)(H) = 0$, it follows that $ka = b$ in $A/\ker(\zeta_H)$. This shows that $a$ generates $A/\ker(\zeta_H)$. Moreover, the order of $a$ is precisely $\dv(H)/\dv(v_d)$. 
\end{proof}

\begin{corollary}\label{cor:isom-sha--brg-1}
    In the notation of Proposition \ref{prop:sesshafabrauer}, we have an isomorphism $$\Sha(S,H)\simeq \Br(\olPic^{g-1}).$$
\end{corollary}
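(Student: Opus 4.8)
The plan is to obtain the isomorphism directly from Proposition \ref{prop:sesshafabrauer} by specialising to $d = g-1$. First I would record that $\olPic^{g-1}$ is a legitimate input for that proposition: by \eqref{eq:vd} its Mukai vector is $v_{g-1} = (0,H,0)$, which is primitive because $H$ is primitive in $\NS(S)$, which satisfies $v_{g-1}^2 = H^2 = 2g-2 \geq 2$, and for which $H$ is effective; hence Theorem \ref{thm: moduli sheaves K3} guarantees that $\olPic^{g-1}$ is a nonempty smooth projective hyperk\"ahler manifold of $\text{K3}^{[g]}$-type.

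Next I would compute the divisibility of $v_{g-1}$ in $\N(S)$. Writing a general element of $\N(S)$ as $(r,D,s)$ with $r,s\in\Z$ and $D\in\NS(S)$, its pairing with $v_{g-1} = (0,H,0)$ is $D\cdot H$, so $\dv_{\N(S)}(v_{g-1}) = \gcd_{D\in\NS(S)}(D\cdot H) = \dv(H)$; equivalently, this is the $d+1-g=0$ instance of the identity $\dv(v_d) = \gcd(\dv(H),\, d+1-g)$ used in Proposition \ref{prop:sesshafabrauer}. I would then apply that proposition with $M = \olPic^{g-1}$, obtaining a short exact sequence $0 \to A/\ker(\zeta_H) \to \Sha(S,H) \to \Br(\olPic^{g-1}) \to 0$ whose kernel is cyclic of order $\dv(H)/\dv_{\N(S)}(v_{g-1}) = \dv(H)/\dv(H) = 1$. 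Therefore the kernel is trivial and $\Sha(S,H) \to \Br(\olPic^{g-1})$ is an isomorphism.

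I do not expect any real obstacle: the entire content is already in Proposition \ref{prop:sesshafabrauer}, and the corollary is merely the observation that its cyclic correction term vanishes exactly when $\dv(H) \mid d+1-g$, which holds in particular for $d = g-1$. The only point worth spelling out --- and which I have included above --- is the verification that $\olPic^{g-1}$ really does satisfy the hypotheses of Proposition \ref{prop:sesshafabrauer} when $d=g-1$, together with the identification $\dv_{\N(S)}(v_{g-1})=\dv(H)$.
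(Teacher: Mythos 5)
Your proposal is correct and matches the paper's own argument: the paper likewise notes that $v_{g-1}=(0,H,0)$ has $\dv(v_{g-1})=\dv(H)$, so the cyclic kernel in Proposition \ref{prop:sesshafabrauer} has order one. The extra verification of the hypotheses is harmless but not needed beyond this one computation.
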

\begin{proof}
    Since $v_{g-1} = (0,H,0)$ it follows that $\dv(v_{g-1}) = \dv(H)$, and now the statement follows from Proposition \ref{prop:sesshafabrauer}.  
\end{proof}

Corollary \ref{cor:isom-sha--brg-1} is slightly surprising. One might have expected an isomorphism between $\Sha(S,H)$ and $\Br(\olPic^0)$, as is the case when $H^2=0$. Instead, if the Picard rank of $S$ is 1, we have a short exact sequence $$0\to \Z/2\Z\to \Sha(S,H)\to \Br(\olPic^0)\to 0$$ by Proposition \ref{prop:sesshafabrauer}. Note, however, that if $H^2 = 0$, we have $g=1$, so that $\olPic^0 = \olPic^{g-1}$. Therefore, Corollary \ref{cor:isom-sha--brg-1} is a generalisation of the well-known isomorphism $\Sha(S)\simeq \Br(S)$ for an elliptic K3 surface $S$ with a section.

Since we have, for any $d\in \Z$, a surjective group homomorphism $\Sha(S,H)\to \Br(\olPic^d)$, it is an important question what the preimages of the obstruction class are. We answer this question in the following proposition, using Theorem \ref{thm:explicitobstruction}.

\begin{proposition}\label{prop:piconemapstoobstruction}
    For any $d\in \Z$, the group homomorphism $\Sha(S,H)\to \Br(\olPic^d)$ of Proposition \ref{prop:sesshafabrauer} satisfies $$[\olPic^1]\mapsto \alpha_d,$$ where $\alpha_d\in \Br(\olPic^d)$ is the obstruction to the existence of a universal sheaf on $S\times \olPic^d$.
\end{proposition}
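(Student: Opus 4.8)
The plan is to make every group concrete and to check that, under the chain of identifications $\Br(\olPic^d)=T'(\olPic^d)\otimes\Q/\Z=(T(S)\otimes\Q)/T'(\olPic^d)$, the image of $[\olPic^1]$ and the obstruction class $\alpha_d$ are literally the same element. Throughout one uses $H^2=2g-2\ge2$, so that $v_d^2=H^2>0$ and Theorem~\ref{thm:explicitobstruction} is available for $\olPic^d=M_{H'}(v_d)$ with $v_d=(0,H,d+1-g)$. Fix once and for all a class $u\in H^2(S,\Z)$ with $u\cdot H=1$ (it exists since $H$ is primitive and $H^2(S,\Z)$ is unimodular), and set $\bar u\coloneqq(u\cdot-)|_{T(S)}\in T(S)^*\subset T(S)\otimes\Q$.

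First I would compute the image of $[\olPic^1]$. As recorded just before the statement, $[\olPic^1]\in\Sha(S,H)=(T(S)\otimes\Q/\Z)/\ker(\zeta_H)$ is represented by the image of $u$ under $H^2(S,\Z)\to T(S)^*\to T(S)\otimes\Q/\Z$, i.e.\ by $\bar u\bmod T(S)$. Since $\ker(\zeta_H)\subseteq A=T'(\olPic^d)/T(S)$ (this inclusion is established inside the proof of Proposition~\ref{prop:sesshafabrauer}), the homomorphism $\Sha(S,H)\to\Br(\olPic^d)$ of \eqref{eq:ses-sha--br} is simply the further quotient map $(T(S)\otimes\Q/\Z)/\ker(\zeta_H)\twoheadrightarrow(T(S)\otimes\Q/\Z)/A=(T(S)\otimes\Q)/T'(\olPic^d)$. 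Hence $[\olPic^1]$ maps to the class $[\bar u]$.

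Next I would identify $\alpha_d$ with $[\bar u]$ using Theorem~\ref{thm:explicitobstruction}, which gives $\alpha_d=w/\dv(v_d)$, where $w\in T'(\olPic^d)$ is, in the notation of Lemma~\ref{lem:sestranscendental} and Proposition~\ref{prop:sesbrauerkn}, the element mapping under $T'(\olPic^d)\hookrightarrow T(S)^*$ to $\dv(v_d)\cdot(u'\cdot-)|_{T(S)}$ for any auxiliary $u'\in\tilde H(S,\Z)$ with $u'\cdot v_d\equiv1\pmod{\dv(v_d)}$. The decisive move is to choose $u'$ compatibly with $u$, namely $u'=(0,u,0)\in\tilde H(S,\Z)$: then $u'\cdot v_d=(0,u,0)\cdot(0,H,d+1-g)=u\cdot H=1$, so $u'$ is admissible, and the restriction $(u'\cdot-)|_{T(S)}$ equals $(u\cdot-)|_{T(S)}=\bar u$ because the $H^0$ and $H^4$ summands of $v_d$ pair trivially with $T(S)\subset H^2(S,\Z)$. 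Consequently $w$ is the preimage of $\dv(v_d)\cdot\bar u$ under $T'(\olPic^d)\hookrightarrow T(S)^*$, so $w=\dv(v_d)\cdot\bar u$ inside $T(S)\otimes\Q$, whence $\alpha_d=w/\dv(v_d)=\bar u$ in $(T(S)\otimes\Q)/T'(\olPic^d)=\Br(\olPic^d)$. This is exactly the image of $[\olPic^1]$, completing the argument.

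There is no deep geometric input beyond Theorem~\ref{thm:explicitobstruction}; the proof is essentially a bookkeeping exercise. The step I expect to need the most care is exactly the compatible choice of the auxiliary vector: one must verify both that $u'=(0,u,0)$ is an admissible choice in Theorem~\ref{thm:explicitobstruction} and that it induces the same functional $\bar u\in T(S)^*$ as the vector $u$ defining $[\olPic^1]$, so that the two independent computations land on one and the same class in $\Br(\olPic^d)$. A secondary point to watch is consistency of signs: $[\olPic^d]$ is normalised via $-d/\dv(D)\cdot D$ in \eqref{eq:picdinshafa}, while on the obstruction side one normalises by ``$u'\cdot v_d\equiv1$''; keeping the conventions ``$u\cdot H=1$'' and ``$u'\cdot v_d\equiv1$'' in force on both sides is what makes the two classes agree exactly rather than up to a sign.
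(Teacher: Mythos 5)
Your proposal is correct and follows essentially the same route as the paper: both identify $[\olPic^1]$ with the class of $\bar u=(u\cdot-)|_{T(S)}$ in $T(S)\otimes\Q/\Z$ and then use the key computation $(0,u,0)\cdot v_d=u\cdot H=1$ together with Theorem \ref{thm:explicitobstruction} to match this with $\alpha_d=w/\dv(v_d)$. The paper's version is terser (it first observes that the image of $\bar u$ lands in the kernel of $\Br(\olPic^d)\to\Br(S)$, hence in $\langle\alpha_d\rangle$, before pinning it down), whereas you make the admissible auxiliary vector $u'=(0,u,0)$ explicit — a detail worth having, but not a different argument.
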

\begin{proof}
Recall that the element $[\olPic^1]\in \Sha(S,H)$ is the image of the element $$\ol{u}\in T(S)\otimes \Q/\Z,$$ where $u\in H^2(S,\Z)$ is a class with $u\cdot H = 1$. We show that the image of $\overline{u}$ in $\Br(\olPic^d)$ is precisely the obstruction class. Consider the chain of surjections 
\begin{equation*}
    \begin{array}{rrr}
         T(S)\otimes \Q/\Z &\twoheadrightarrow T'(\olPic^d)\otimes \Q/\Z&\twoheadrightarrow T(S)^*\otimes \Q/\Z.  \\
         & \simeq \Br(\olPic^d) & \simeq \Br(S)
    \end{array}
\end{equation*}
Since $\ol{u}\in T(S)^*$, it follows that the image of $\ol{u}$ in $\Br(\olPic^d)$ is in the kernel of the map $\Br(\olPic^d)\to \Br(S)$, which is generated by $\alpha_d$. From $u\cdot v_d = 1$, it follows that $\overline{u}$ maps to $\alpha_d$ by Theorem \ref{thm:explicitobstruction}.
\end{proof}

We can now see how Theorem \ref{thm:ADM} resembles Theorem \ref{thm:donagi--pantev}. 

\begin{corollary}\label{cor:adm=dp}
    Let $S$ be a K3 surface of Picard rank 1. Let $H\in \NS(S)$ be an ample generator, and let $d,e\in \Z$. Denote by $\overline{\alpha_d}\in \Br(\olPic^e)$, resp. $\ol{\alpha_e}\in \Br(\olPic^d)$ the images of $[\olPic^d]$, resp. $[\olPic^e]$ in $\Br(\olPic^e)$, resp. $\Br(\olPic^d)$. Then there is an equivalence $$\Db(\olPic^d,\ol{\alpha_e})\simeq \Db(\olPic^e,\ol{\alpha_d}^{-1}).$$
\end{corollary}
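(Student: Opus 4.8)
The plan is to deduce the corollary directly from Theorem \ref{thm:ADM} of Addington, Donovan and Meachan, the only work being to translate the twisting classes $\alpha_d^{\,e}$ and $\alpha_e^{-d}$ appearing there into the restricted Tate--Shafarevich classes $\ol{\alpha_e}$ and $\ol{\alpha_d}^{-1}$ via Proposition \ref{prop:piconemapstoobstruction}.

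First I would record that $[\olPic^d]=d\cdot[\olPic^1]$ in $\Sha(S,H)$ for every $d\in\Z$. This is immediate from \eqref{eq:picdinshafa}: the class $[\olPic^d]$ is induced by $\tfrac{-d}{\dv(D)}D\in T(S)\otimes\Q/\Z$, which is $d$ times the element $\tfrac{-D}{\dv(D)}$ inducing $[\olPic^1]$, and the group law on $\Sha(S,H)=\bigl(T(S)\otimes\Q/\Z\bigr)/\ker(\zeta_H)$ is inherited from $T(S)\otimes\Q/\Z$.

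Next, by Proposition \ref{prop:piconemapstoobstruction} the surjection $\Sha(S,H)\to\Br(\olPic^d)$ of Proposition \ref{prop:sesshafabrauer} carries $[\olPic^1]$ to the obstruction class $\alpha_d$; combined with the previous step, the image of $[\olPic^e]$ in $\Br(\olPic^d)$ is $e\cdot\alpha_d=\alpha_d^{\,e}$, i.e.\ $\ol{\alpha_e}=\alpha_d^{\,e}$. Applying the same reasoning with the roles of $d$ and $e$ exchanged, the image of $[\olPic^d]$ in $\Br(\olPic^e)$ is $\ol{\alpha_d}=\alpha_e^{\,d}$, hence $\ol{\alpha_d}^{-1}=\alpha_e^{-d}$.

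Finally I would invoke Theorem \ref{thm:ADM}: since $\NS(S)=\langle H\rangle$ with $H$ ample, there is an equivalence $\Db(\olPic^d,\alpha_d^{\,e})\simeq\Db(\olPic^e,\alpha_e^{-d})$, and substituting $\alpha_d^{\,e}=\ol{\alpha_e}$ and $\alpha_e^{-d}=\ol{\alpha_d}^{-1}$ gives the claim. The only delicate point is bookkeeping: one must apply Proposition \ref{prop:piconemapstoobstruction} to the correct moduli space in each of the two slots, and keep track of the sign in \eqref{eq:picdinshafa} so that $[\olPic^1]$ really maps to $\alpha_d$ and not its inverse. This is routine, and no genuine obstacle arises --- the mathematical substance is entirely carried by Theorem \ref{thm:ADM} together with the explicit obstruction-class formula of Theorem \ref{thm:explicitobstruction} underlying Proposition \ref{prop:piconemapstoobstruction}.
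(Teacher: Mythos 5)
Your proposal is correct and follows essentially the same route as the paper: the paper's proof likewise reads off $\ol{\alpha_e}=\alpha_d^{\,e}$ and $\ol{\alpha_d}=\alpha_e^{\,d}$ from Proposition \ref{prop:piconemapstoobstruction} and then invokes Theorem \ref{thm:ADM}. Your additional observation that $[\olPic^d]=d\cdot[\olPic^1]$ in $\Sha(S,H)$ is exactly the (implicit) bookkeeping step that makes this work, so nothing is missing.
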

\begin{proof}
By Proposition \ref{prop:piconemapstoobstruction}, we have $\ol{\alpha_d} = \alpha_e^d\in \Br(\olPic^e)$, and $\ol{\alpha_e} = \alpha_d^e\in \Br(\olPic^d)$. Combining this with Theorem \ref{thm:ADM} proves the result.
\end{proof}

It is an interesting open question whether Corollary \ref{cor:adm=dp} holds more generally. For example, when we consider the twisted Picard varieties of \cite{HM23}, or when we drop the assumption that $\rho = 1$. 

\subsection{Birational equivalence of moduli spaces}
In this subsection, we study birational equivalences of moduli spaces of sheaves on elliptic K3 surfaces with a section. More specifically, we study the birational geometry of Beauville--Mukai systems on such K3 surfaces. Theorem \ref{thm:birationalequivalencebeauville--mukaioverelliptick3surfaces} below fully classifies which Beauville--Mukai systems over elliptic K3 surfaces are birational. This classification uses our explicit computation of the obstruction class. 

It is a well-known fact that a K3 surface $S$ admits an elliptic fibration with a section if and only if there is an isotropic class $F\in \NS(S)$ of divisibility $1$, see for example \cite[Remark 3.2.13 and \S11.4]{Huy16}. This is equivalent to the existence of an embedding $U\subset \NS(S)$. In this case, the lattice $\N(S) \simeq \NS(S)\oplus U$ admits an embedding $U^{\oplus2}\subset \N(S)$, so that we may apply Lemma \ref{lem:eichlertransvections} to $\N(S)$.

The main result about the birational geometry of $M_H(v)$ is the Markman's \textit{Birational Torelli Theorem}. We write $\tilde{\Lambda}\coloneqq U\oplus \klat = U^{\oplus 4}\oplus E_8(-1)^{\oplus 2}$. Note that, as an abstract lattice, $\tilde{\Lambda}$ is isometric to the Mukai lattice of any K3 surface. For any hyperk\"ahler variety of \Kn-type, there is a natural $O(\tilde{\Lambda})$-orbit $i_X$ of embeddings $H^2(X,\Z)\hookrightarrow \tilde{\Lambda}$ \cite{Mar11}. 

\begin{theorem}[Birational Torelli Theorem] \cite[Corollary 9.9]{Mar11}\label{thm:birationaltorelli}
    Let $X$ and $Y$ be hyperk\"ahler varieties of \Kn-type. The following are equivalent:
    \begin{itemize}
        \item[i)] $X$ and $Y$ are birational,
        \item[ii)] There is a Hodge isometry $H^2(X,\Z)\simeq H^2(Y,\Z)$ making the following diagram commute: \begin{equation*}
            \xymatrix{
            H^2(X,\Z) \ar[r] \ar[d] & H^2(Y,\Z) \ar[d]\\
           \tilde{\Lambda}\ar[r] & \tilde{\Lambda},
            }
        \end{equation*}
        where the vertical arrows are embeddings contained in the orbits $i_X$ and $i_Y$, respectively.
    \end{itemize}
\end{theorem}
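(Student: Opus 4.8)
The plan is to deduce the statement from Verbitsky's global Torelli theorem for hyperk\"ahler manifolds, in the refined forms due to Markman and Huybrechts, together with Markman's determination of the group $\operatorname{Mon}^2(X)$ of monodromy operators of a \Kn-type manifold. The bridge between these inputs is a reformulation that is essentially built into the construction of the orbit $i_X$ in \cite{Mar11}: a Hodge isometry $g\colon H^2(X,\Z)\simeq H^2(Y,\Z)$ fits into a commuting square with embeddings chosen from $i_X$ and $i_Y$ (as in ii)) \emph{if and only if} $g$ is a parallel transport operator. Indeed, the orbit $i_X$ is by construction a parallel-transport invariant, so every parallel transport operator is compatible with $i_X$, $i_Y$; conversely, the subgroup of $O(H^2(X,\Z))$ preserving the $O(\tilde{\Lambda})$-orbit of a chosen embedding into $\tilde{\Lambda}$ is precisely $\operatorname{Mon}^2(X)$, so compatibility of $g$ with $i_X$ and $i_Y$ forces $g$ to be parallel transport.

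For i) $\Rightarrow$ ii): a birational map $f\colon X\dashrightarrow Y$ of hyperk\"ahler manifolds is an isomorphism away from closed analytic subsets of codimension at least two, hence induces an isomorphism of integral Hodge structures $f_*\colon H^2(X,\Z)\simeq H^2(Y,\Z)$ preserving the Beauville--Bogomolov--Fujiki form. By Huybrechts' result that birational hyperk\"ahler manifolds become isomorphic after an arbitrarily small deformation, $f_*$ is realised by parallel transport along such a deformation, so $f_*$ is a parallel transport operator; by the reformulation above it is then compatible with $i_X$ and $i_Y$, which is ii).

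For ii) $\Rightarrow$ i): the reformulation turns the hypothesis into the statement that $g$ is a parallel transport operator and a Hodge isometry. Apply the global Torelli theorem: such a $g$ maps the positive cone of $X$ to the positive cone of $Y$, and after composing with a suitable finite product of monodromy reflections in wall classes of the movable cone of $Y$ --- each of which is, by Markman's classification of these walls in terms of prime exceptional divisors and of Lagrangian $\PP^k$-fibrations, induced by a birational automorphism between birational models of $Y$ --- one reduces to the case where $g$ carries a K\"ahler class of $X$ into the K\"ahler cone of $Y$. In that case the K\"ahler-cone form of global Torelli produces an isomorphism inducing $g$ between $X$ and one of these models; tracing the reflections back shows that $X$ is birational to $Y$. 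Since $X$ and $Y$ are projective, bimeromorphic coincides with birational, so this is i).

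I expect the main obstacle to be concentrated in the direction ii) $\Rightarrow$ i), and more precisely in the two nontrivial inputs it rests on, both of which are the substance of \cite{Mar11}: first, the explicit computation of $\operatorname{Mon}^2(X)$ for \Kn-type --- equivalently, the assertion that compatibility with the extended embeddings into $\tilde{\Lambda}$ exactly characterises parallel transport operators --- which requires controlling the induced action on the discriminant group of $H^2(X,\Z)$ together with the orientation of the positive cone; and second, the wall-and-chamber analysis of the movable cone needed to promote a period-preserving Hodge isometry to a genuine birational map. Granting these, the implication i) $\Rightarrow$ ii) and the bookkeeping with monodromy reflections are comparatively formal.
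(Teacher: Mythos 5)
The paper does not prove this statement: it is quoted directly from Markman's survey (\cite[Corollary 9.9]{Mar11}), so there is no in-paper argument to compare yours against. Judged on its own terms, your sketch is an accurate reconstruction of how the result is established in the literature: the orbit $i_X$ is a faithful monodromy invariant, so compatibility with the embeddings into $\tilde{\Lambda}$ encodes the parallel-transport condition; the direction i) $\Rightarrow$ ii) then follows from the codimension-two isomorphism locus plus Huybrechts' theorem that birational hyperk\"ahler manifolds are inseparable points of the marked moduli space, and ii) $\Rightarrow$ i) from the Hodge-theoretic global Torelli theorem of Verbitsky--Huybrechts--Markman.

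Two imprecisions are worth flagging. First, your reformulation ``the subgroup of $O(H^2(X,\Z))$ preserving the orbit is precisely $\operatorname{Mon}^2(X)$'' omits the orientation condition: by Nikulin's criterion an isometry preserves the $O(\tilde{\Lambda})$-orbit of the embedding if and only if it acts by $\pm 1$ on the discriminant group $\Z/(2n-2)\Z$ (the orthogonal complement of the image being rank one, generated by a vector of square $2n-2$), and $\operatorname{Mon}^2(X)$ is the intersection of this stabiliser with $O^+(H^2(X,\Z))$. You acknowledge the orientation issue at the end but state the reformulation without it; the discrepancy is harmless for the corollary, since $-\operatorname{id}$ is an orbit-compatible, orientation-reversing Hodge isometry with which one can always correct the sign. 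Second, for ii) $\Rightarrow$ i) the wall-and-chamber analysis of the movable cone is more than is needed: once $g$ is known to be a parallel transport Hodge isometry, the marked pairs $(X,\eta)$ and $(Y,\eta\circ g)$ lie in the same connected component of the moduli space of marked pairs and have equal periods, hence are inseparable by Verbitsky's theorem, and Huybrechts' theorem yields a bimeromorphic (hence, by projectivity, birational) map directly. The movable-cone machinery is only required if one wants to realise a \emph{prescribed} isometry as the pullback along a birational map, which is more than the equivalence asserts.
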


If $M = M_H(v)$ is a moduli space of sheaves on a K3 surface $S$, then the orbit of embeddings $H^2(M,\Z)\hookrightarrow \tilde{\Lambda}$ is the one containing the following embedding: 
\begin{equation}\label{eq:naturalembeddingformoduli}
    H^2(M,\Z)\simeq v^\perp \hookrightarrow \tilde{H}(S,\Z)\simeq \tilde{\Lambda}.
\end{equation} 
Here, the first isometry is the Mukai morphism of Theorem \ref{thm:mukaimorphism}, and the isometry $\tilde{H}(S,\Z)\simeq \tilde{\Lambda}$ is arbitrary. Note that the $O(\tilde{\Lambda})$-orbit of the embedding \eqref{eq:naturalembeddingformoduli} is independent of the choice of isometry $\tilde{H}(S,\Z)\simeq \tilde{\Lambda}$.

Let $S$ be a K3 surface. Recall from Definition \ref{def:caldararuclass} that for a vector $v\in \N(S)$, we call the element $a_v\coloneqq \frac{v}{\dv(v)}\in A_{\N(S)}$ the C\u ald\u araru class of $v$, and that the element $\omega_v\in A_{T(S)}$ corresponding to $a_v$ via the natural isomorphism $A_{\N(S)}\simeq A_{T(S)}(-1)$ is called the transcendental C\u ald\u araru class of $v$.

\begin{proposition}\label{prop:birationaliffcaldararuclass}
    Let $S$ be a K3 surface for which there exists an embedding $U\subset \NS(S)$. Let $u,v\in \operatorname{N}(S)$ be primitive Mukai vectors with $u^2=v^2=2n-2,$ $n>1$. Then the moduli spaces $M(u)$ and $M(v)$ are birational if and only if there is a Hodge isometry $\psi:T(S)\simeq T(S)$ such that $\overline{\psi}(\omega_v) = \omega_u$.
\end{proposition}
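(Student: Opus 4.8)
The plan is to deduce this from Markman's Birational Torelli Theorem (Theorem~\ref{thm:birationaltorelli}) via the natural embeddings \eqref{eq:naturalembeddingformoduli} of $H^2$ into the Mukai lattice. The first step is a reformulation. Since $u^2=v^2=2n-2>0$, the Mukai morphism (Theorem~\ref{thm:mukaimorphism}) identifies $H^2(M(u),\Z)$ with $u^\perp\subset\tilde H(S,\Z)$ and $H^2(M(v),\Z)$ with $v^\perp$ as Hodge lattices, and the canonical orbit $i_{M(v)}$ is exactly the $O(\tilde\Lambda)$-orbit of the inclusion $v^\perp\hookrightarrow\tilde H(S,\Z)\simeq\tilde\Lambda$. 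Chasing the commutative square of Theorem~\ref{thm:birationaltorelli} then shows that $M(u)$ and $M(v)$ are birational if and only if there is an isometry $w$ of $\tilde H(S,\Z)$ with $w(v^\perp)=u^\perp$ whose restriction to $v^\perp$ is a Hodge isometry. Since $\CC\sigma=H^{2,0}(S)\subset T(S)\otimes\CC\subset v^\perp\otimes\CC$, such a $w$ automatically sends $\sigma$ to a multiple of $\sigma$, hence is a Hodge isometry of all of $\tilde H(S,\Z)$; and as $v$ spans the saturated rank-one sublattice $(v^\perp)^\perp$, the condition $w(v^\perp)=u^\perp$ forces $w(v)=\pm u$, which after composing with $-\mathrm{id}$ (still a Hodge isometry) can be arranged to be $w(v)=u$. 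Thus $M(u)$ and $M(v)$ are birational if and only if there is a Hodge isometry $w$ of $\tilde H(S,\Z)$ with $w(v)=u$.

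Any Hodge isometry $w$ of $\tilde H(S,\Z)$ preserves $\N(S)=\tilde H^{1,1}(S,\Z)$ and $T(S)=\N(S)^\perp$, and these two primitive sublattices of the \emph{unimodular} lattice $\tilde H(S,\Z)$ are glued along their discriminant groups in the sense of Lemma~\ref{lem:nikulinorthogonal}, with the isomorphism $A_{\N(S)}\simeq A_{T(S)}(-1)$ of Remark~\ref{rmk:discriminantorthogonal} identifying the C\u ald\u araru class $a_v=v/\dv(v)$ with $\omega_v$ (Definition~\ref{def:caldararuclass}). For the implication $(\Rightarrow)$, given $w$ with $w(v)=u$ I set $\psi\coloneqq w|_{T(S)}$, a Hodge isometry of $T(S)$. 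Since $w$ preserves both $\N(S)$ and $T(S)$, Lemma~\ref{lem:nikulinorthogonal} gives that $\overline{w|_{\N(S)}}$ and $\bar\psi$ induce the same map on the common image $A_{\N(S)}\simeq A_{T(S)}$; applying $\overline{w|_{\N(S)}}$ to $a_v=v/\dv(v)$ and using $w(v)=u$ yields $\overline{w|_{\N(S)}}(a_v)=a_u$, which under $A_{\N(S)}\simeq A_{T(S)}(-1)$ translates into $\bar\psi(\omega_v)=\omega_u$.

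For the implication $(\Leftarrow)$, start from a Hodge isometry $\psi$ of $T(S)$ with $\bar\psi(\omega_v)=\omega_u$. Since $U\subset\N(S)=T(S)^\perp$, Lemma~\ref{lem:extendisometryhyperbolic} extends $\psi$ to an isometry $w_1$ of $\tilde H(S,\Z)$ with $w_1|_{T(S)}=\psi$; as $\psi$ is a Hodge isometry, so is $w_1$, and it preserves $\N(S)$, with $g_0\coloneqq w_1|_{\N(S)}$ satisfying $\overline{g_0}=\bar\psi$ on the common discriminant by Lemma~\ref{lem:nikulinorthogonal}. Hence $w_1(v)=g_0(v)$ is a primitive vector of $\N(S)$ with $g_0(v)^2=v^2=u^2$ and, by the same discriminant bookkeeping as above, $a_{g_0(v)}=a_u$ in $A_{\N(S)}$. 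Now I invoke the hypothesis: using $\N(S)\simeq\NS(S)\oplus U$, the embedding $U\subset\NS(S)$ gives an orthogonal decomposition $\N(S)\simeq U^{\oplus2}\oplus L'$, so Lemma~\ref{lem:eichlertransvections} applies to the two primitive vectors $g_0(v)$ and $u$ and produces $f\in O(\N(S))$ with $\bar f=\mathrm{id}_{A_{\N(S)}}$ and $f(g_0(v))=u$. By Lemma~\ref{lem:nikulinorthogonal} (pairing $f$ with $\mathrm{id}_{T(S)}$, whose discriminant actions trivially agree), $f$ extends to a Hodge isometry $w_2$ of $\tilde H(S,\Z)$ restricting to the identity on $T(S)$; then $w\coloneqq w_2\circ w_1$ is a Hodge isometry of $\tilde H(S,\Z)$ with $w(v)=u$, and by the first paragraph $M(u)$ and $M(v)$ are birational.

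The main obstacle is the passage between the ``global'' picture on the unimodular Mukai lattice $\tilde H(S,\Z)$ and the ``transcendental'' picture on $T(S)$. This rests on two lattice-theoretic inputs: Nikulin's gluing criterion (Lemma~\ref{lem:nikulinorthogonal}), which lets one assemble isometries of $\N(S)$ and $T(S)$ into an isometry of $\tilde H(S,\Z)$ precisely when their discriminant actions match, and the Eichler-transvection transitivity statement (Lemma~\ref{lem:eichlertransvections}), which is exactly where the hypothesis $U\subset\NS(S)$ enters: it guarantees two hyperbolic planes inside $\N(S)$, without which two primitive vectors of $\N(S)$ with equal square and equal C\u ald\u araru class need not lie in one $O(\N(S))$-orbit acting trivially on the discriminant. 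The remaining points --- correctly tracking the Mukai-morphism identifications, the isomorphism $A_{\N(S)}\simeq A_{T(S)}(-1)$, and the sign in $w(v)=\pm u$ --- are routine.
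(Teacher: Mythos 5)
Your proof is correct and follows essentially the same route as the paper's: both directions use the Birational Torelli Theorem reduced to the existence of a Hodge isometry of $\tilde H(S,\Z)$ sending $v$ to $u$, with Lemma~\ref{lem:nikulinorthogonal} for the discriminant bookkeeping, Lemma~\ref{lem:extendisometryhyperbolic} to extend $\psi$ from $T(S)$, and Lemma~\ref{lem:eichlertransvections} to move $\Psi(v)$ to $u$ inside $\N(S)$. The only difference is that you spell out the reformulation of the Torelli criterion (the sign $\pm u$ and the automatic Hodge compatibility) in more detail than the paper does.
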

\begin{proof}
    Suppose $M(u)$ and $M(v)$ are birational. Then by the Birational Torelli Theorem, there is a commutative diagram of Hodge isometries:
    \begin{equation}
        \xymatrix{
        v^\perp \ar[d] \ar^\simeq[r] & u^\perp \ar[d]\\
        \tilde{H}(S,\Z) \ar^\simeq_\Phi[r] & \tilde{H}(S,\Z).
        }
    \end{equation}
    Denote $\phi\coloneqq \Phi|_{\N(S)}: \N(S)\simeq \N(S)$ and $\psi\coloneqq \Phi|_{T(S)}: T(S)\simeq T(S)$, and note that $\psi$ is a Hodge isometry. Then $\phi(v) = \pm u$. Possibly replacing $\Phi$ by $-\Phi$, we may assume $\phi(v) = u$. We have $\overline{\phi}(a_v)=a_u$, hence $\overline{\psi}(\omega_v) = \omega_u$ by Lemma \ref{lem:nikulinorthogonal}.

    Conversely, suppose $\psi: T(S)\to T(S)$ is a Hodge isometry which satisfies $\overline{\psi}(\omega_v) = \omega_u$. Since $T(S)^\perp = \N(S)$, we may apply Lemma \ref{lem:extendisometryhyperbolic}, and extend $\psi$ to a Hodge isometry $\Psi:
    \tilde{H}(S,\Z)\to \tilde{H}(S,\Z)$. Denote $\phi\coloneqq\Psi|_{\N(S)}: \N(S)\simeq \N(S)$. 
    Since $\overline{\psi}(\omega_v) = \omega_u$, we have $a_{\phi(v)} = \overline{\phi}(a_{v}) = a_u$. By Lemma \ref{lem:eichlertransvections}, there is an isometry $\iota: \N(S)\simeq \N(S)$ which maps $\phi(v)$ to $u$ and which acts trivially on $A_{\N(S)}$. In particular, we can extend $\iota$, by the identity on $T(S)$, to a Hodge isometry  $\Gamma: \tilde{H}(S,\Z)\simeq \tilde{H}(S,\Z)$. Now $\Gamma\circ \Psi$ is a Hodge isometry of $\tilde{H}(S,\Z)$ which maps $v$ to $u$, and this implies by the Birational Torelli Theorem that $M(v)$ and $M(u)$ are birational.
\end{proof}

\begin{proposition}\label{prop:finemodulispacebirationaltohilbertscheme}
    Let $S$ be a K3 surface for which there exists an embedding $U\subset \NS(S)$. If $M$ is a fine(!) $2n$-dimensional moduli space of sheaves with Mukai vector $v\in \N(S)$, then $M$ is birational to $S^{[n]}$. 
\end{proposition}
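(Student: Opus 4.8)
The plan is to realise $S^{[n]}$ as a moduli space of sheaves on $S$ whose Mukai vector has divisibility one, and then to feed this into the birational criterion of Proposition \ref{prop:birationaliffcaldararuclass}.

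\textbf{Step 1: $S^{[n]}$ as a moduli space with trivial C\u ald\u araru class.} First I would recall that the Hilbert scheme $S^{[n]}$ is itself a moduli space of sheaves on $S$: a torsion-free rank-one sheaf on $S$ with trivial determinant and second Chern character $-n$ is precisely the ideal sheaf $I_Z$ of a length-$n$ zero-dimensional subscheme $Z\subset S$, and every such sheaf is automatically Gieseker stable (a proper rank-one subsheaf has strictly smaller reduced Hilbert polynomial). Hence $M_{H'}(v_0)\simeq S^{[n]}$ for the Mukai vector $v_0\coloneqq(1,0,1-n)$ and every polarisation $H'$ (in particular every $H'$ is $v_0$-generic). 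One computes $v_0^2=2n-2$, and $\dv_{\N(S)}(v_0)=1$ since the first component of $v_0$ is $1$; therefore the C\u ald\u araru class $a_{v_0}=v_0/\dv(v_0)=v_0$ is the image of an integral vector, so $a_{v_0}=0$ in $A_{\N(S)}$, and the transcendental C\u ald\u araru class $\omega_{v_0}\in A_{T(S)}$ vanishes as well.

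\textbf{Step 2: applying the birational criterion.} Since $M=M_H(v)$ is fine, Corollary \ref{cor:fineiffdivone} gives $\dv_{\N(S)}(v)=1$, so by the same argument $\omega_v=0$ in $A_{T(S)}$. Assuming $n\geq2$, both $v$ and $v_0$ are primitive Mukai vectors in $\N(S)$ with $v^2=v_0^2=2n-2>0$, so Proposition \ref{prop:birationaliffcaldararuclass} applies with the Hodge isometry $\psi=\operatorname{id}_{T(S)}$, which trivially satisfies $\overline{\psi}(\omega_v)=0=\omega_{v_0}$. We conclude that $M=M(v)$ is birational to $M(v_0)=S^{[n]}$.

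I do not expect a real obstacle: the substance is entirely in Proposition \ref{prop:birationaliffcaldararuclass}, and what remains is the identification of $S^{[n]}$ with $M(v_0)$ and the elementary observation that divisibility one forces the C\u ald\u araru class to be trivial. The only point that needs separate attention is the degenerate case $n=1$, where $S^{[1]}=S$ and birationality of K3 surfaces means isomorphism: there the Mukai morphism already yields a Hodge isometry $T(M)\simeq T(S)$, so $M$ is a Fourier--Mukai partner of $S$ by the Derived Torelli Theorem \ref{thm:introderivedtorelli}, and since $U\subset\NS(S)$ the surface $S$ admits no nontrivial Fourier--Mukai partner, whence $M\simeq S$; alternatively one simply restricts to $n\geq2$, which is the range of interest.
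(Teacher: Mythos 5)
Your proof is correct and follows essentially the same route as the paper: reduce via Corollary \ref{cor:fineiffdivone} to $\dv(v)=1$, observe that the (transcendental) C\u ald\u araru class then vanishes, and apply Proposition \ref{prop:birationaliffcaldararuclass} to compare with $M(1,0,1-n)\simeq S^{[n]}$; the paper handles $n=1$ by exactly the Fourier--Mukai partner argument you give. The extra detail you supply (stability of ideal sheaves, the identification $M(v_0)\simeq S^{[n]}$) is fine but not a different method.
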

\begin{proof}
    If $n=1$, then $M$ is a Fourier--Mukai partner of $S$ by the Derived Torelli Theorem \cite{Muk87,Orl03}. However, since $U\subset \NS(S)$, $S$ does not have any non-trivial Fourier--Mukai partners by \cite[Corollary 2.7(3)]{HLOY02}. Therefore, we have $M\simeq S$, as required.

    Now assume $n\geq 2$.
    By Theorem \ref{thm:explicitobstruction}, we have $\dv(v) = 1$. In particular, the C\u ald\u araru class of $v$ is $$a_v = \frac{v}{\dv(v)} = v = 0\in A_{\N(S)}.$$ If $u\in \N(S)$ is any other Mukai vector with $u^2 = 2n-2$ and  $\dv(u) = 1$, then $a_u = 0$ as well, hence $M$ is birational to $M(u)$ by Proposition \ref{prop:birationaliffcaldararuclass}. If we let $u = (1,0,1-n)$, then $M(u)\simeq S^{[n]}$, hence $M$ is birational to $S^{[n]}$.
\end{proof}

\begin{theorem}\label{thm:noderivedtorelli}
    Let $S$ be a K3 surface for which there exists an embedding $U\subset \NS(S)$. Let $M = M_H(v)$ be a moduli space of sheaves on $S$ of dimension $2n>2$. Consider the following three statements. 
    \begin{itemize}
        \item[i)] $M$ is a fine moduli space.
        \item[ii)] $M$ is birational to the Hilbert scheme $S^{[n]}$.
        \item[iii)] $M$ is derived equivalent to the Hilbert scheme $S^{[n]}$.
    \end{itemize}
    Then we have $$i) \iff ii) \implies iii).$$ 
\end{theorem}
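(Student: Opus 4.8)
The plan is to establish the three arrows $i)\Rightarrow ii)$, $ii)\Rightarrow i)$ and $ii)\Rightarrow iii)$ one at a time. The first is immediate: $i)\Rightarrow ii)$ is precisely Proposition \ref{prop:finemodulispacebirationaltohilbertscheme}, whose standing hypothesis $U\subset\NS(S)$ is exactly what we assume here, applied with $n\geq 2$ (the relevant range, since $\dim M=2n>2$).

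For $ii)\Rightarrow i)$ I would present the Hilbert scheme as a moduli space of sheaves, $S^{[n]}\simeq M_{H_0}(v_0)$ with $v_0=(1,0,1-n)$ and $H_0$ any ample class (automatically $v_0$-generic, the rank being $1$); this is the same presentation already used inside the proof of Proposition \ref{prop:finemodulispacebirationaltohilbertscheme}. Since $U\subset\NS(S)$ and $v^2=v_0^2=2n-2>0$ with $n>1$, Proposition \ref{prop:birationaliffcaldararuclass} applies to the pair $v,v_0$: because $M$ and $S^{[n]}$ (i.e. $M(v)$ and $M(v_0)$) are birational, there is a Hodge isometry $\psi\colon T(S)\simeq T(S)$ with $\overline\psi(\omega_v)=\omega_{v_0}$. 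Now $\dv(v_0)=1$, since for instance $(0,0,1)\cdot v_0=-1$; hence the C\u ald\u araru class $a_{v_0}=v_0/\dv(v_0)=v_0$ lies in $\N(S)$, so it is $0$ in $A_{\N(S)}$, and therefore its counterpart $\omega_{v_0}\in A_{T(S)}$ vanishes. As $\overline\psi$ is a group automorphism of $A_{T(S)}$, this forces $\omega_v=0$, equivalently $a_v=0$ in $A_{\N(S)}$; since $v$ is primitive, $v/\dv(v)\in\N(S)$ then forces $\dv(v)=1$, and Corollary \ref{cor:fineiffdivone} yields that $M$ is fine.

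For $ii)\Rightarrow iii)$ I would invoke that birational projective hyperk\"ahler manifolds of \Kn-type are derived equivalent, and conclude since both $M$ and $S^{[n]}$ are of \Kn-type. Concretely, a birational map between two such manifolds decomposes into a finite chain of (stratified) Mukai flops — a reduction available for \Kn-type from the study of their birational geometry by Markman and Huybrechts — and each such flop induces an equivalence of bounded derived categories (Namikawa, Kawamata, and others); alternatively, combining $ii)\Rightarrow i)$ with the results of Bayer--Macr\`i one realises both $M$ and $S^{[n]}$ as moduli spaces of Bridgeland-stable objects on $S$ with the same primitive Mukai vector, whose birational models are linked by wall-crossing equivalences. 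Either route gives $\Db(M)\simeq\Db(S^{[n]})$.

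I expect this last input to be the main obstacle: everything in $ii)\Rightarrow i)$ is a short discriminant-group computation once Proposition \ref{prop:birationaliffcaldararuclass} and Corollary \ref{cor:fineiffdivone} are in hand, whereas the derived-equivalence statement for birational \Kn-type manifolds is the one step drawing on machinery outside the lattice-theoretic toolkit of the earlier sections — and it is precisely the direction whose converse $iii)\Rightarrow ii)$ fails in general. The only other point worth double-checking in $ii)\Rightarrow i)$ is that $S^{[n]}$ genuinely occurs as $M_{H_0}(1,0,1-n)$, which is standard.
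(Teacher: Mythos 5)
Your treatment of $i)\iff ii)$ is correct and is essentially the paper's argument: $i)\Rightarrow ii)$ is Proposition \ref{prop:finemodulispacebirationaltohilbertscheme}, and for $ii)\Rightarrow i)$ the paper likewise deduces $\dv(v)=\dv\bigl((1,0,1-n)\bigr)=1$ from Proposition \ref{prop:birationaliffcaldararuclass} and Corollary \ref{cor:fineiffdivone}; your discriminant-group computation (isometries of $A_{T(S)}$ kill $\omega_v$ because $\omega_{v_0}=0$, hence $a_v=0$ and primitivity forces $\dv(v)=1$) is just an expanded form of the observation that the order of $\omega_v$, which equals $\dv(v)$, is preserved. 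The paper additionally remarks that this step follows directly from the Birational Torelli Theorem \ref{thm:birationaltorelli} without using $U\subset\NS(S)$, since a Hodge isometry of $\tilde{H}(S,\Z)$ carrying $u$ to $\pm v$ preserves divisibilities.

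The one genuine problem is your first justification of $ii)\Rightarrow iii)$. It is \emph{not} known that a birational map between hyperk\"ahler manifolds of \Kn-type decomposes into a chain of (stratified) Mukai flops once $n\geq 3$: the decomposition into Mukai flops is a theorem of Wierzba and Wi\'sniewski only in dimension four, and the D-equivalence statement for arbitrary birational hyperk\"ahler manifolds remains open. Your alternative route is much closer to what is actually needed: the paper cites Halpern-Leistner's proof of the D-equivalence conjecture for varieties birational to a moduli space of Gieseker-semistable sheaves on a K3 surface \cite{Hal21}, which is exactly the statement that the wall-crossing birational transformations in the Bayer--Macr\`i picture lift to derived equivalences. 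That lifting is not contained in Bayer--Macr\`i itself, so your second route needs \cite{Hal21} to close. You correctly flagged this as the step requiring machinery external to the lattice-theoretic part of the paper; with \cite{Hal21} substituted for the Mukai-flop argument, the proof is complete.
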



\begin{proof}
    The implication $i)\implies ii)$ is Proposition \ref{prop:finemodulispacebirationaltohilbertscheme}. 
    We now prove that $ii)\implies i).$
    Suppose $M$ is birational to $S^{[n]}$. Let $u = (1,0,1-n)\in \N(S)$, so that $S^{[n]}\simeq M(u)$. Then the divisibility of $v$ in $\N(S)$ is equal to the divisibility of $u=(1,0,1-n)$ in $\N(S)$ by Proposition \ref{prop:birationaliffcaldararuclass}, hence $M$ is a fine moduli space by Corollary \ref{cor:fineiffdivone}. The equality of the divisibilities of $v$ and $u$ is also an easy consequence of the Birational Torelli Theorem, and does not depend on the existence of a primitive embedding $U\subset \NS(S)$. Indeed, by Theorem \ref{thm:birationaltorelli}, if $M$ and $S^{[n]}$ are birational, there is a Hodge isometry $\phi: \tilde{H}(S,\Z)\simeq \tilde{H}(S,\Z)$ which satisfies $\phi(u) = \pm v$. Since $\phi$ is an isometry, the divisibilities of $u$ and $v$ are equal. 

The implication $ii)\implies iii)$ is \cite{Hal21}.
\end{proof}


For $d\in \Z$, let $v_d=(0,H,d+1-g)$, where $H\subset S$ is a smooth, irreducible curve on a (not necessarily elliptic) K3 surface $S$, and assume $H$ is primitive in $\NS(S)$, and $H^2 = 2g-2\geq 2$. We write $$a_d\coloneqq a_{v_d}=\frac{v_d}{\dv(v_d)}$$ for the C\u ald\u araru class of $v_d$.

\begin{lemma}\label{lem:caldararuclassbm}
    We have $$a_d = \frac{H}{\dv(v_d)}\in A_{\N(S)}.$$ In particular, for $e\in \Z$, we have $a_d=a_e$ if and only if $\dv(v_d) = \dv(v_e)$.
\end{lemma}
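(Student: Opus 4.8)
The plan is to compute $a_d$ directly from the definition, using the explicit shape $v_d = (0,H,d+1-g)$ together with the bilinear form $(r,l,s)\cdot(r',l',s') = l\cdot l' - rs' - sr'$ on $\N(S) \simeq \NS(S)\oplus U$. First I would record two divisibility facts. Pairing $v_d$ with the generator $(1,0,0)$ of the $H^0$-summand gives $(1,0,0)\cdot v_d = -(d+1-g)$, so $\dv(v_d) \mid d+1-g$; pairing $v_d$ with $(0,l,0)$ for $l\in\NS(S)$ gives $l\cdot H$, so $\dv(v_d) \mid \dv(H)$. (In fact $\dv(v_d) = \gcd(\dv(H),d+1-g)$, but only these two divisibility statements are needed here.)

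Next I would unwind the C\u ald\u araru class. By the definition of $\dv(v_d)$ we have $v_d/\dv(v_d) \in \N(S)^*$, and writing it out under the decomposition $\N(S) = \NS(S)\oplus U$ gives
\[
\frac{v_d}{\dv(v_d)} = \left(0,\ \frac{H}{\dv(v_d)},\ \frac{d+1-g}{\dv(v_d)}\right),
\]
where $H/\dv(v_d) \in \NS(S)^*$ by the second divisibility fact and $(d+1-g)/\dv(v_d)\in\Z$ by the first. Hence the vector $(0,0,(d+1-g)/\dv(v_d))$ already lies in $\N(S)$, so subtracting it shows that $a_d = v_d/\dv(v_d)$ equals the class of $(0, H/\dv(v_d), 0)$ in $A_{\N(S)} = \N(S)^*/\N(S)$; that is, $a_d = H/\dv(v_d)$, as claimed.

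Finally, for the \emph{in particular} clause: if $\dv(v_d) = \dv(v_e)$ then trivially $a_d = H/\dv(v_d) = H/\dv(v_e) = a_e$. Conversely, if $a_d = a_e$ in $A_{\N(S)}$, then $\bigl(1/\dv(v_d) - 1/\dv(v_e)\bigr)H$ lies in $\NS(S)$; since $H$ is primitive, $\NS(S)\cap\Q H = \Z H$, so this rational number is an integer, and assuming $\dv(v_d)\le\dv(v_e)$ it lies in $[0,1)$, hence equals $0$, giving $\dv(v_d) = \dv(v_e)$. I do not expect a genuine obstacle here; the only mild care needed is the bookkeeping across the three summands of $\N(S)$ and checking membership in the relevant dual lattices, and it is precisely the primitivity hypothesis on $H$ that makes the converse of the last assertion work.
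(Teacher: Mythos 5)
Your proof is correct and follows essentially the same route as the paper: write out $v_d/\dv(v_d)$ componentwise, observe that $\dv(v_d)$ divides $d+1-g$ so the $H^4$-component is integral and can be discarded modulo $\N(S)$. The paper leaves the ``in particular'' clause implicit; your argument for the converse via primitivity of $H$ (equivalently, one could note that the order of $a_d$ in $A_{\N(S)}$ is $\dv(v_d)$) is a correct way to fill it in.
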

\begin{proof}
    By definition, $$a_d = \frac{1}{\dv(v_d)}(0,H,d+1-g)=\frac{H}{\dv(v_d)} + (0,0,\frac{d+1-g}{\dv(v_d)}).$$ The result follows from the fact that $\dv(v_d)$ divides $d+1-g$. 
\end{proof}

\begin{theorem}\label{thm:birationalequivalencebeauville--mukaioverelliptick3surfaces}
    Let $S$ be a K3 surface for which there exists an embedding $U\subset \NS(S)$. Let $H\subset S$ be a smooth irreducible curve with $H^2 = 2g-2$ and with primitive class in $\NS(S)$. Let $d,e\in \Z$. Then the following are equivalent:
    \begin{itemize}
        \item[i)] $\olPic^d$ is birational to $\olPic^e$.
        \item[ii)] The obstruction classes of $\olPic^d$ and $\olPic^e$ have the same order. 
        \item[iii)] The Mukai vectors of $\olPic^d$ and $\olPic^e$ have the same divisibility. More precisely, we have $\gcd(\dv(H),d+1-g) = \gcd(\dv(H),e+1-g)$.
    \end{itemize}    
\end{theorem}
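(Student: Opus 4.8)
The plan is to establish $ii)\iff iii)\iff i)$, using the divisibility of the Mukai vector $v_d=(0,H,d+1-g)$ as the invariant that ties everything together. First I would record the identity $\dv(v_d)=\gcd(\dv(H),d+1-g)$: for an arbitrary vector $(r,D,s)\in\N(S)$ the Mukai pairing gives $(r,D,s)\cdot v_d=D\cdot H-r(d+1-g)$, and as $(r,D,s)$ ranges over $\N(S)$ the values $D\cdot H$ generate $\dv(H)\Z$ while $(r,D)=(1,0)$ contributes $d+1-g$; hence $\dv(v_d)=\gcd_{u\in\N(S)}(u\cdot v_d)=\gcd(\dv(H),d+1-g)$. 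With this, $iii)$ is literally the assertion $\dv(v_d)=\dv(v_e)$, and the equivalence $ii)\iff iii)$ is immediate from Theorem \ref{thm:explicitobstruction}, which says the obstruction class $\alpha_d\in\Br(\olPic^d)$ has order exactly $\dv(v_d)$ in $\N(S)$, and likewise for $e$.

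For $iii)\Rightarrow i)$ I would feed this into Proposition \ref{prop:birationaliffcaldararuclass}, whose standing hypothesis is precisely the existence of an embedding $U\subset\NS(S)$. Assuming $\dv(v_d)=\dv(v_e)$, Lemma \ref{lem:caldararuclassbm} gives $a_{v_d}=a_{v_e}$ in $A_{\N(S)}$, hence the transcendental C\u ald\u araru classes agree, $\omega_{v_d}=\omega_{v_e}\in A_{T(S)}$, since they are the images of $a_{v_d},a_{v_e}$ under the fixed isomorphism $A_{\N(S)}\simeq A_{T(S)}(-1)$. Now $v_d^2=v_e^2=H^2=2g-2>0$ and both vectors are primitive, so Proposition \ref{prop:birationaliffcaldararuclass} applies with the Hodge isometry $\psi=\operatorname{id}_{T(S)}$, for which $\overline{\psi}(\omega_{v_e})=\omega_{v_e}=\omega_{v_d}$; it yields that $\olPic^d=M(v_d)$ and $\olPic^e=M(v_e)$ are birational.

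For the last implication $i)\Rightarrow iii)$ — the only one not using the explicit obstruction class — I would invoke Markman's Birational Torelli Theorem \ref{thm:birationaltorelli} (alternatively one could cite \cite[Proof of Lemma 9.10]{Bec22} for $i)\Rightarrow ii)$ and close the loop with $ii)\iff iii)$). If $\olPic^d$ and $\olPic^e$ are birational, then, using that the canonical $O(\tilde{\Lambda})$-orbit of embeddings of a moduli space $M(v)$ is represented by $v^\perp\hookrightarrow\tilde{H}(S,\Z)$, see \eqref{eq:naturalembeddingformoduli}, the commuting square of Theorem \ref{thm:birationaltorelli} produces a Hodge isometry $\Phi\colon\tilde{H}(S,\Z)\simeq\tilde{H}(S,\Z)$ carrying $v_d^\perp$ onto $v_e^\perp$; since $\tilde{H}(S,\Z)$ is unimodular and $v_d,v_e$ are primitive, $\Phi(v_d)=\pm v_e$, and an isometry preserves divisibility, so $\dv(v_d)=\dv(v_e)$, i.e. $iii)$ holds. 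Apart from this last step, the argument is just bookkeeping on top of Theorem \ref{thm:explicitobstruction}, Lemma \ref{lem:caldararuclassbm} and Proposition \ref{prop:birationaliffcaldararuclass}; the point that needs the most care is extracting from the birational equivalence a Mukai-lattice isometry with $\Phi(v_d)=\pm v_e$ (via the identification of the embedding orbit) and double-checking $v_d^2=v_e^2>0$ so that Proposition \ref{prop:birationaliffcaldararuclass} is legitimately applicable.
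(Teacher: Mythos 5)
Your proposal is correct and follows essentially the same route as the paper: $ii)\iff iii)$ via Theorem \ref{thm:explicitobstruction}, and $iii)\Rightarrow i)$ via Lemma \ref{lem:caldararuclassbm} and Proposition \ref{prop:birationaliffcaldararuclass} with $\psi=\operatorname{id}_{T(S)}$. The only (harmless) variation is in $i)\Rightarrow iii)$, where you invoke the Birational Torelli Theorem directly to get $\Phi(v_d)=\pm v_e$, whereas the paper reads off the equality of divisibilities from the equality of the orders of the transcendental C\u ald\u araru classes supplied by Proposition \ref{prop:birationaliffcaldararuclass}; both arguments are valid and the paper itself notes the Torelli route elsewhere.
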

\begin{proof}
    Note that $\gcd(\dv(H),d+1-g)=\dv(v_d)$ for all $d\in \Z$.
    The equivalence $ii)\iff iii)$ follows from the fact that the order of $\alpha_d$ is equal to $\dv(v_d)$ for all $d\in \Z$ by Theorem \ref{thm:explicitobstruction}. 
    
    By Proposition \ref{prop:birationaliffcaldararuclass}, $\olPic^d$ and $\olPic^e$ are birational if and only if there exists a Hodge isometry $\psi:T(S)\simeq T(S)$ such that $\overline{\psi}(\omega_d) = \omega_e$. In this case, the orders of $\omega_d$ and $\omega_e$ in $A_{T(S)}$ are equal. Since the order of $\omega_d$ equals the order of $a_d$, which is $\dv(v_d)$, this means that $\dv(v_d) = \dv(v_e)$.
    
    Conversely, if $\dv(v_d)=\dv(v_e)$, then $a_d = a_e$ by Lemma \ref{lem:caldararuclassbm}, and $\olPic^d$ and $\olPic^e$ are birational by Proposition \ref{prop:birationaliffcaldararuclass}.
\end{proof}

\begin{remark}
    Theorem \ref{thm:birationalequivalencebeauville--mukaioverelliptick3surfaces} provides many examples of birational Beauville--Mukai systems $\olPic^d$ and $\olPic^e$ whose generic fibres are not isomorphic as $\olPic^0_\eta$-torsors. Indeed, let $D\subset \NS(S)$ be a primitive divisor with $\dv(D)>1$, and let $L\in \NS(S)$ be any ample divisor. Let $m$ be a large multiple of $\dv(D)$. Then the general member $H$ of the complete linear system $|mL + D|$ is a smooth and irreducible curve, and $\dv(H) = k\dv(D)$ for some $k\geq 1$. Moreover, we may assume $H$ is primitive, possibly after dividing by a divisor of $k$. For any $d,e\in \Z$ such that $\gcd(\dv(H),d+1-g)=\gcd(\dv(H),e+1-g)$, but such that $d\not\equiv e\pmod{\dv(H)}$, the Beauville--Mukai systems $\olPic^d$ and $\olPic^e$ are birational by Theorem \ref{thm:birationalequivalencebeauville--mukaioverelliptick3surfaces}, but their generic fibres are not isomorphic $\olPic^0_\eta$-torsors by Remark \ref{rem:whichdegreesgiveisomorphictorsors}.
\end{remark}

\subsection{Counterexamples to a Derived Torelli conjecture}

The straightforward generalisation of the Derived Torelli Theorem \ref{thm:introderivedtorelli} to hyperk\"ahler manifolds of \Kn-type was studied in \cite{KK24} and \cite{PR23}. The main goal of this section is to prove the following theorem, which provides counterexamples to this conjecture.

\begin{theorem}\label{thm:coexampleDerTorelli}
    For every $n\geq 3$, there exist moduli spaces of sheaves $M$ and $M'$ of dimension $2n$, on the same K3 surface, for which there is a Hodge isometry $T(M)\simeq T(M')$, but which are not derived equivalent.
\end{theorem}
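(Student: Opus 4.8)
\emph{Strategy.} The plan is to realise the two moduli spaces on a single K3 surface $S$ of Picard rank one, so that the Hodge isometry $T(M)\simeq T(M')$ comes for free, and then to obstruct a derived equivalence by comparing the C\u ald\u araru classes (equivalently, the divisibilities) of the two Mukai vectors. Fix $n\ge 3$ and let $S$ be a K3 surface with $\NS(S)=\Z H$, $H^2=2(n-1)$ and $H$ ample (such $S$ exist by surjectivity of the period map). Set $v=(2,H,0)$ and $v'=(1,0,1-n)$ in $\N(S)$; both are primitive with $v^2=(v')^2=2n-2$. Since $\rho(S)=1$, $H$ is $v$-generic and $v'$-generic, so by Theorem \ref{thm: moduli sheaves K3} the moduli spaces $M\coloneqq M_H(v)$ and $M'\coloneqq M_H(v')\simeq S^{[n]}$ are smooth projective hyperk\"ahler manifolds of \Kn-type of dimension $2n$. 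One computes $\dv_{\N(S)}(v)=\gcd(H^2,2)=2$ and $\dv_{\N(S)}(v')=1$, so the C\u ald\u araru class $a_v\in A_{\N(S)}$ has order $2$ while $a_{v'}=0$. Finally, the Hodge isometry $T(M)\simeq T(M')$ requires nothing new: by Theorem \ref{thm:mukaimorphism} the Mukai morphism is a Hodge isometry $v^{\perp}\simeq H^2(M,\Z)$, and since $T(S)\subset v^{\perp}\subset\tilde{H}(S,\Z)$ is the (rank $21$) transcendental lattice with $v^{\perp}/T(S)$ algebraic, it restricts to a Hodge isometry $T(S)\simeq T(M)$; similarly $T(S)\simeq T(M')$, hence $T(M)\simeq T(M')$.

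\emph{Non-existence of a derived equivalence.} Suppose $\Db(M)\simeq\Db(M')$. Via the Mukai morphism, identify $H^2(M,\Z)\simeq v^{\perp}$ and $H^2(M',\Z)\simeq(v')^{\perp}$ inside $\tilde{H}(S,\Z)$, so that the natural $O(\tilde{\Lambda})$-orbits of embeddings in \eqref{eq:naturalembeddingformoduli} are represented by $v^{\perp}\hookrightarrow\tilde{H}(S,\Z)$ and $(v')^{\perp}\hookrightarrow\tilde{H}(S,\Z)$. By the results of Beckmann \cite{Bec22} on derived equivalences of hyperk\"ahler manifolds of \Kn-type and their extended Mukai vectors --- whose argument applies once $n\ge 3$ --- such an equivalence induces a Hodge isometry $g\colon\tilde{H}(S,\Z)\to\tilde{H}(S,\Z)$ carrying the extended Mukai vector of $M$ to that of $M'$ up to sign and monodromy, i.e.\ with $g(v)=\pm v'$. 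Since $g$ preserves the algebraic part $\N(S)=T(S)^{\perp}$, it induces an automorphism $\overline{g}$ of $A_{\N(S)}$ with $\overline{g}(a_v)=a_{g(v)}=\pm a_{v'}$; in particular $a_v$ and $a_{v'}$ have the same order in $A_{\N(S)}$. This contradicts $\ord(a_v)=2\ne 1=\ord(a_{v'})$, so $\Db(M)\not\simeq\Db(M')$, and $(M,M')$ is the required pair.

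\emph{The main obstacle.} The only non-formal step is the implication ``$\Db(M)\simeq\Db(M')$ produces a Hodge isometry $g$ of $\tilde{H}(S,\Z)$ with $g(v)=\pm v'$'': equivalently, that the divisibility of the Mukai vector of a moduli space of sheaves of dimension at least $6$ is a derived invariant (and, by the Birational Torelli Theorem \ref{thm:birationaltorelli}, that such moduli spaces are derived equivalent if and only if they are birational). The hypothesis $n\ge 3$ is genuinely needed: for $n=2$ the statement is false, since by Zhang \cite{Ruxuan} all four-dimensional moduli spaces of sheaves on a fixed K3 surface --- fine or not --- are derived equivalent; the extra four-dimensional equivalences arise from $(-2)$-type behaviour available when $v^2=2$ and disappear once $v^2\ge 4$. (If one is content to assume $U\subset\NS(S)$, one may instead run the argument through ``derived equivalent $\Rightarrow$ birational'', Theorem \ref{thm:noderivedtorelli} and Proposition \ref{prop:birationaliffcaldararuclass}, at the cost of working with elliptic K3 surfaces; the Picard-rank-one examples above avoid this and isolate the C\u ald\u araru class as the direct obstruction.)
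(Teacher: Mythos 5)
There is a genuine gap, and it sits exactly at the step you yourself flag as ``the main obstacle.'' Your argument needs the implication: a derived equivalence $\Db(M)\simeq\Db(M')$ between $2n$-dimensional moduli spaces ($n\ge 3$) induces a Hodge isometry $g$ of $\tilde H(S,\Z)$ with $g(v)=\pm v'$ --- equivalently, that the divisibility of the Mukai vector is a derived invariant, equivalently (via the Birational Torelli Theorem) that derived equivalence implies birationality. This is precisely the assertion from \cite{Bec22} that the present paper points out has a flawed proof; the paper never claims, and the literature does not supply, that ``the argument applies once $n\ge 3$.'' Invoking it here is close to circular: Theorem \ref{thm:coexampleDerTorelli} is designed as a counterexample to the naive Derived Torelli conjecture, so one cannot lean on an unproven strengthening of Beckmann's claim to produce the counterexample. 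Your fallback is also based on a misreading: Theorem \ref{thm:noderivedtorelli} proves $i)\iff ii)\implies iii)$, i.e.\ \emph{birational $\Rightarrow$ derived equivalent}; it does not give the converse direction you would need.

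What the paper actually does is quite different and avoids this trap. It uses only the part of \cite{Bec22} it regards as correct, namely Theorem 9.2: a derived equivalence induces a Hodge isometry of the rank-$25$ Taelman--Beckmann extended Mukai lattices $\widetilde\Lambda_M=H^2(M,\Z)\oplus U(-1)$ (these are attached to the hyperk\"ahler manifolds $M$, $M'$ and should not be conflated with the rank-$24$ Mukai lattice $\tilde H(S,\Z)$ of the surface, as your write-up does). For the specific pair $M=\olPic^{g-1}=M(0,H,0)$ and $M'=\olPic^{g}=M(0,H,1)$ on a Picard rank one K3 with $H^2=2g-2\ge 4$, it computes $\NS(M)\simeq U$ and $\NS(M')=\langle(0,0,1),(2g-2,H,0)\rangle$, chooses suitable classes $\delta_M$, $\delta_{M'}$, and finds $\disc\widetilde\Lambda_M^{1,1}=4$ while $\disc\widetilde\Lambda_{M'}^{1,1}=(2g-2)^2$; since these differ for $g\ge 3$, the $(1,1)$-parts are not isometric and no derived equivalence can exist. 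If you want to keep your pair $v=(2,H,0)$, $v'=(1,0,1-n)$, you would have to carry out the analogous discriminant computation for $\widetilde\Lambda^{1,1}$; merely observing $\dv(v)=2\ne 1=\dv(v')$ does not obstruct a derived equivalence by any result currently available.
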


In the proof of Theorem \ref{thm:coexampleDerTorelli}, we use the construction of the \textit{extended Mukai latttice} by Taelman and Beckmann \cite{Tae21,Bec22}. We briefly recall the definition here.
To any hyperk\"ahler manifold $M$ of K$3^{[n]}$-type, one can associate the extended Mukai lattice
\begin{equation}\label{eq:extendedmukailattice}
    \widetilde{\Lambda}_M\coloneqq H^2(M,\bbZ)\oplus U(-1),
\end{equation}
where the standard generators of $U(-1)$ are denoted $\alpha,\beta$. The Hodge structure on $\Lambda_{M'}$ is not the one induced by \eqref{eq:extendedmukailattice}. Instead, given any class $\delta_M\in H^2(M,\bbZ)$ satisfying $\delta_M^2=2-2n$ and $\dv(\delta_M)=2n-2$, one defines a \textit{twisted Hodge structure} of weight $2$ on $\widetilde{\Lambda}_M$ by $$\widetilde{\Lambda}_M^{2,0}\coloneqq \CC\cdot\left(\sigma+\frac{\sigma\cdot\delta_M}{2}\beta\right)$$ where $\sigma\in H^0(M,\Omega^2)$ is the symplectic form. Naturally, one defines $\widetilde{\Lambda}_M^{0,2}\coloneqq \overline{\widetilde{\Lambda}_M^{2,0}}$ and $\widetilde{\Lambda}_M^{1,1}=(\widetilde{\Lambda}_M^{2,0})^\perp$. In \cite[Theorem 9.2]{Bec22}, it is proved that any derived equivalence
$$\Db(M)\xrightarrow{\sim} \Db(M')$$
induces a Hodge isometry
$$\widetilde{\Lambda}_M \xrightarrow{\sim}\widetilde{\Lambda}_{M'}.$$

\begin{proof}[Proof of Theorem \ref{thm:coexampleDerTorelli}]
    Let $(S,H)$ be a polarized K3 surface with $\Pic(S)=\bbZ\cdot H$, $H^2=2g-2\geq 4$ and consider the moduli spaces $M=\olPic^{g-1}=M(0,H,0)$ and $M'=\olPic^g=M(0,H,1)$. Then $T(M)\simeq T(M')$. To prove that $M$ and $M'$ are not derived equivalent, we prove that their extended Mukai lattices are not Hodge isometric, more precisely they have non-isometric $(1,1)$-part. 

    Consider the Mukai lattice $\widetilde{H}(S,\bbZ)$, and fix $\delta\in T(S)$ the class such that $\frac{H-\delta}{2g-2}\in H^2(S,\bbZ)$, so that $\delta^2=2-2g$ and $\dv_{T(S)}(\delta)=2g-2$ (\ref{rmk:discriminantorthogonal}).
    Via the Mukai morphism, one gets $H^2(M,\bbZ)\simeq (0,H,0)^\perp=U\oplus T(S)\subset \widetilde{H}(S,\bbZ)$ and therefore $\NS(M)=U$. Similarly, one finds that $H^2(M',\bbZ)$ is generated by $T(S)$, $(0,0,1)$ and $(1,\frac{H-\delta}{2g-2},0)$, and therefore $\NS(M')=\langle (0,0,1),(2g-2,H,0)\rangle$.
    
    We now consider the extended Mukai lattices $\widetilde{\Lambda}_M$ (resp. $\widetilde{\Lambda}_{M'}$) with respect to the class $\delta_M=(0,\delta,0)$ (resp. $\delta_{M'}=(2g-2,H,1)$). A quick computation shows that $\widetilde{\Lambda}_{M}^{1,1}$ is generated by $\NS(M)$ and the two classes $2\alpha+\delta_M, \beta$. In particular, we get that
    $$\disc \widetilde{\Lambda}_{M}^{1,1}=4\cdot \disc \NS(M)=4.$$
    On the other hand, we have $\sigma\cdot \delta_{M'} = \sigma\cdot (2g-2,H,1) = 0,$ thus $\Lambda_{M'}^{1,1} = \Z\alpha\oplus \NS(M')\oplus \Z\beta \simeq U\oplus \NS(M').$ In particular, we get
    $$\disc \widetilde{\Lambda}_{M'}^{1,1}=\disc U\oplus\NS(M')= \disc(\NS(M'))=(2g-2)^2.$$
    Since $2g-2 \geq 4$, we have 
    \[
    \disc \widetilde{\Lambda}_{M}^{1,1} = 4 \neq (2g-2)^2 = \disc \widetilde{\Lambda}_{M'}^{1,1},
    \]
    we obtain that $\widetilde{\Lambda}^{1,1}_{M}$ and $\widetilde{\Lambda}^{1,1}_{M'}$ are not isometric. In particular, by \cite[Theorem 9.2]{Bec22}, $M$ and $M'$ are not derived equivalent.
\end{proof}

\printbibliography
\end{document}